\documentclass[11pt]{article}
\usepackage[margin=1.1in]{geometry}
\usepackage{amsmath,amssymb,amsthm,mathtools}
\usepackage[colorlinks=true,linkcolor=blue,citecolor=blue,urlcolor=blue]{hyperref}

\usepackage{enumitem}

\newtheorem{theorem}{Theorem}
\newtheorem{proposition}{Proposition}
\newtheorem{lemma}{Lemma}

\theoremstyle{definition}
\newtheorem{remark}{Remark}
\newtheorem{example}{Example}

\newtheorem{assumption}{Assumption}
\newtheorem{problem}{Problem}

\newcommand{\E}{\mathbb E}
\newcommand{\R}{\mathbb R}
\newcommand{\dd}{\mathrm d}
\newcommand{\law}{\mathrm{Law}}
\newcommand{\U}{\mathrm U}

\newcommand{\Psym}{\mathcal P_{\mathrm{sym}}}

\newcommand{\Pcsym}{\mathcal P_{\mathrm{c},\mathrm{sym}}}
\newcommand{\piant}{\pi_{\mathrm{ant}}}
\newcommand{\picom}{\pi_{\mathrm{com}}}

\usepackage{tikz}

\title{Diamond Transports in Quadratic-Form and Distorted Optimal Transport}
\author{Ruodu Wang\thanks{Department of Statistics and Actuarial Science, University of Waterloo, Canada. Email: \texttt{wang@uwaterloo.ca}.}\and 
Zhenyuan Zhang\thanks{Department of Mathematics, Stanford University, USA. Email: \texttt{zzy@stanford.edu}.}
 } 
\date{}

\begin{document}
\maketitle

\begin{abstract}

The diamond transport is generated by the uniform law on a diamond-shaped copula support. Since a classical optimal transport (OT) objective  is affine in the coupling, this transport cannot be the unique minimizer in the classical setting. We study a broader family of transports, called diamond-type transports, in non-classical settings such as quadratic-form optimal transport (QOT) and distorted optimal transport (DOT), which are generally nonconvex. Our main results are within the QOT framework: for symmetric one-dimensional marginals, the diamond transport is an optimizer for a large class of QOT problems whose costs depend on within-coordinate distances. Examples include product costs  under positive-definiteness and convexity conditions and, in particular, mixed rectangular costs. For rectangular costs, we show that the diamond transport is the unique minimizer except for boundary cases. In the DOT framework, diamond-type transports are minimizers for a natural class of cost, and the diamond transport is the unique minimizer in specialized examples. We also identify the intersection between DOT and QOT, which corresponds precisely to quadratic distortion functions.

\medskip
\noindent \textbf{Keywords}: Comonotonicity, distorted expectation,  Gromov–Wasserstein distance, rectangular cost, quadratic assignment problems

\smallskip
\noindent \textbf{MSC classification 2020}: 49Q22; 62H05
\end{abstract}

\section{Introduction}
The diamond copula $C_{\mathrm{dia}}$ is the uniform distribution on the boundary of the two-dimensional diamond $$\{(u,v)\in [0,1]^2: |2u-1|+|2v-1|=1\}.$$ 
\sloppy For marginals \(\mu,\nu\) on $\R$ with distributions $F_\mu,F_\nu$, the {diamond transport} is
\(\pi_{\mathrm{dia}}=(F_\mu^{-1},F_\nu^{-1})_\# C_{\mathrm{dia}}\). 
  Figure \ref{fig:diamond-copula} illustrates $\pi_{\rm dia}$ and $C_{\rm dia}$. 

 For probability measures $\mu,\nu$, we let $\Pi(\mu,\nu)$ denote the set of all couplings of $\mu,\nu$, i.e., joint distributions with marginals given by $\mu,\nu$. 
A classical optimal transport (OT) problem \cite{santambrogio2015optimal}, for a cost function \(c\), is
\[
  \text{minimize }  \iint c(x,y) \,\dd\pi(x,y)
  \quad\text{over }\pi\in\Pi(\mu,\nu).
\]
  In general, the diamond transport cannot be the unique minimizer of any classical OT problem, because it is the mixture of two other copulas (see Figure \ref{fig:2}), and the OT problem is affine in $\pi$. For instance, when $\mu,\nu$ are symmetric about zero, the diamond transport is a minimizer of $\int (|x|+|y|)^2\dd \pi(x,y)
 $, but so are the transports associated with the copulas in Figure \ref{fig:2} and their weighted averages; see Theorem \ref{thm:absolute-value-classic} for a more general result.

\begin{figure}[t]
\centering
\setlength{\unitlength}{0.78mm}
\begin{picture}(160,84)
\thinlines
\put(4,16){\framebox(56,56){}}
\put(98,16){\framebox(56,56){}}
\thicklines
\qbezier[60](32,72)(54,69)(60,44)
\qbezier[60](60,44)(55,20)(32,16)
\qbezier[60](32,16)(9,20)(4,44)
\qbezier[60](4,44)(10,69)(32,72)
\put(126,72){\line(1,-1){28}}
\put(154,44){\line(-1,-1){28}}
\put(126,16){\line(-1,1){28}}
\put(98,44){\line(1,1){28}}
\thinlines
\put(68,44){\vector(1,0){20}}
\put(78,52){\makebox(0,0){\((F_\mu,F_\nu)\)}}
\put(32,80){\makebox(0,0){nonuniform physical marginals}}
\put(126,80){\makebox(0,0){copula coordinates}}
\put(32,8){\makebox(0,0){physical space}}
\put(126,8){\makebox(0,0){copula space}}
\end{picture}
\caption{Support of the diamond transport (left) and the diamond copula (right).}
\label{fig:diamond-copula}

\bigskip

\begin{tikzpicture}[scale=4, line cap=round, line join=round]

\begin{scope}
  
  \draw[thin] (0,0) rectangle (1,1);

  \draw[very thick] (0,0.5) -- (0.5,1);
  \draw[very thick] (0.5,0) -- (1,0.5);

  \foreach \x/\lab in {0/$0$,0.5/$\frac12$,1/$1$} {
    \draw[thin] (\x,0) -- (\x,-0.015);
    \node[below] at (\x,-0.015) {\scriptsize \lab};
  }
  \foreach \y/\lab in {0/$0$,0.5/$\frac12$,1/$1$} {
    \draw[thin] (0,\y) -- (-0.015,\y);
    \node[left] at (-0.015,\y) {\scriptsize \lab};
  }

  \node[below] at (0.5,-0.16) {\small $u$};
  \node[left] at (-0.12,0.5) {\small $v$};
  \node[below] at (0.5,-0.30) {\small Support of $C_1$};
\end{scope}

\begin{scope}[xshift=1.45cm]
  
  \draw[thin] (0,0) rectangle (1,1);

  \draw[very thick] (0,0.5) -- (0.5,0);
  \draw[very thick] (0.5,1) -- (1,0.5);

  \foreach \x/\lab in {0/$0$,0.5/$\frac12$,1/$1$} {
    \draw[thin] (\x,0) -- (\x,-0.015);
    \node[below] at (\x,-0.015) {\scriptsize \lab};
  }
  \foreach \y/\lab in {0/$0$,0.5/$\frac12$,1/$1$} {
    \draw[thin] (0,\y) -- (-0.015,\y);
    \node[left] at (-0.015,\y) {\scriptsize \lab};
  }

  \node[below] at (0.5,-0.16) {\small $u$};
  \node[left] at (-0.12,0.5) {\small $v$};
  \node[below] at (0.5,-0.30) {\small Support of $C_2$};
\end{scope}

\end{tikzpicture}

\caption{Two copula supports whose average is the diamond copula:
$C_{\rm dia}= (C_1 +   C_2)/2$.}
\label{fig:2}
\end{figure}

Outside the classical OT framework, as we  will show in this paper, the diamond transport can be the unique minimizer of certain classes of non-classical OT problems. In particular, this happens in the framework of
quadratic-form optimal
transport (QOT), introduced in \cite{wangzhangQOT}, as well as the framework of
 distorted optimal transport (DOT), introduced in \cite{liu2023distorted}.

Our paper analyzes the diamond transport and associated transports in QOT and DOT, with 
 QOT as the main focus. 
 Given \(\mu,\nu\), and a cost function \(c\), the QOT problem is 
\[
  \text{minimize }\quad
  \mathcal Q_c(\pi):=\iint c(x,y,x',y')\,\dd\pi(x,y)\,\dd\pi(x',y')
  \quad\text{over }\pi\in\Pi(\mu,\nu).
\]
This quadratic dependence on the coupling makes QOT structurally different
from, and more difficult than,  classical Kantorovich OT.  It also
contains models closely related to Gromov--Wasserstein distances
\cite{memoli2007use,memoli2011gromov}, quadratic assignment problems (QAP)
\cite{burkard2012assignment,cela1998quadratic}, and quadratically
regularized optimal transport
\cite{blondel2018smooth,essid2018quadratically,lorenz2021quadratically}. On the other hand, QOT problems are in general non-convex and can be NP-hard, so explicit solutions appear rare and attractive.
An important observation is that QOT is no longer affine in
the coupling.  A first-order condition may still be governed by the same
antimonotone matching of absolute values as in classical OT, but the quadratic
term can add strict convexity on the set of couplings and thereby select the
uniform sign structure of the diamond transport.

A natural subclass consists of costs of the form \(c(x,y,x',y')=h(|x-x'|,|y-y'|)\), where the within-coordinate
distances measure paired variations. In this case, the
total cost $\mathcal Q_c(\pi)$ can encode the discrepancy, similarity, or fairness of $\pi$ through pairwise
comparisons. For example, a cost in this class could measure the
difference between \(Y\) and \(Y'\) while the separation of \(X\) and \(X'\)
controls how this difference is discounted \cite[Appendix C.1]{wangzhangQOT}. As another example, the case $h(u,v)=|u^q-v^q|^p$ leads to the Gromov--Wasserstein transport cost on $(\R,|\cdot|)$. Results in  
\cite[Section 6]{wangzhangQOT} further showed that some examples in this class are explicitly solved by the diamond transport. For instance, a maximizer of the Gromov--Wasserstein distance with $p=2,\,q\in(1,2)$ and symmetric marginals is given by the diamond coupling. What makes such results attractive is that the diamond transport is not Monge (i.e., not representable by a map) and is not uniquely selected by classical OT problems.

We extend our study to the class of   \emph{diamond-type transports}, namely transports that are absolutely continuous with respect to the diamond transport, as well as the framework of   DOT.
The DOT problem is formulated by replacing the expectation of the transport cost
with  a distorted expectation. Distorted expectations are
Choquet integrals with respect to capacities obtained by distorting
probabilities, building on the Choquet integral of \cite{choquet1954theory} and the
nonadditive expectation theory, popular in economic decision theory \cite{quiggin1982anticipated,yaari1987dual,schmeidler1986integral,schmeidler1989subjective} and risk measures \cite{acerbi2002spectral,mcneil2015quantitative}.  The DOT
nonlinearity depends only on the distribution of \(c(X,Y)\), whereas QOT
depends on pairwise interactions between two independent copies of the
coupling.  This contrast helps separate three mechanisms: linearity in classic
OT, positive-definiteness and quadratic convexity in QOT, and scalar distortion
in DOT.

We summarize our main technical contributions below.
In most results, we assume the marginals are symmetric about zero. 
The most technically heavy innovations are the QOT results. 
\begin{itemize}
\item We  introduce the class of diamond-type transports, and show in Theorem \ref{thm:absolute-value-classic} that they are the minimizers of a class of OT problems concerning the absolute values of both components. 
    \item We give a general convex criterion for diamond optimality (Theorem \ref{thm:main}).  The argument consists of two steps: convexity reduces global optimality
to a first-variation inequality at the diamond transport, and a quadrant
rearrangement argument proves that inequality. A crucial property we use is that for symmetric one-dimensional marginals, the diamond transport couples their absolute values antimonotonically and then assigns independent signs.  This gives a shorter and more powerful proof of the rectangular examples in \cite[Theorem 12]{wangzhangQOT}. 

\item Following Theorem \ref{thm:main}, we provide a neat and general sufficient condition for the diamond optimality and uniqueness for costs of the form $\psi_1(|x-x'|)\psi_2(|y-y'|)$ (Theorem \ref{thm:taylor-products}). More precisely, we show that as long as $\psi_i,~i=1,2$ satisfy $\psi_i(0)=0$, are continuous, nondecreasing, convex, and $C^2$ on $(0,\infty)$, and are such that for every \(\alpha>0\), $(u,v)\mapsto e^{-\alpha\psi_i(|u-v|)}$
is positive definite on \(\mathbb R\), then the diamond transport is a minimizer. In particular, this includes the mixed rectangular
cost $|x-x'|^p|y-y'|^q,~p,q\in(1,2]$. The proof consists of a three-fold limit argument: first truncate the marginals, then regularize $\psi_i,~i=1,2$ to gain enough convexity, and finally a Taylor expansion trick that reduces this regularized optimization problem to a limiting application of Theorem \ref{thm:main}.

\item We characterize the uniqueness of the solution to the mixed rectangular
cost \(|x-x'|^p|y-y'|^q\) (Theorem \ref{thm:q-rectangular}).  We prove that the mixed rectangular
cost has the diamond transport as the unique minimizer
when \(1<p,q<2\), under the natural \(p\)- and \(q\)-moment assumptions.
This resolves a conjecture in \cite[Appendix D(iv)(b)]{wangzhangQOT}.  We also give a complete uniqueness criterion for the
boundary cases in which one or both exponents equal \(2\).  The proof relies on 
the fact that $|x-x'|^p,~p\in(1,2)$ is a strictly negative type kernel.

\item 
Theorem \ref{thm:dot-qot-overlap} shows that DOT and QOT overlap precisely when the distortion function is a quadratic function.   In Theorem  \ref{thm:dot-diamond-type}, we show that diamond-type transports minimize a class of DOT problems formulated on the absolute value of both components.  
We also give a specialized example of DOT (Example \ref{thm:dot-unique-diamond}) in which the diamond transport is the unique minimizer.
\end{itemize}

Although the previous work \cite{wangzhangQOT} provided explicitly solvable examples of diamond optimality, the QOT results in this paper appear significantly stronger for the following reasons. First, Theorem \ref{thm:main} is more powerful than \cite[Theorem 12]{wangzhangQOT} (Remark \ref{rem:comp1}) and admits a much simpler proof. Second, along the lines of Theorem \ref{thm:taylor-products} and its subsequent examples, \cite[Theorem 14]{wangzhangQOT} only managed to prove rectangular costs with $p=q$ under suboptimal moment assumptions, and hence Theorem \ref{thm:taylor-products} is much more general. Third, for the main QOT results, we provide sufficient conditions for the uniqueness of the solution, which was not addressed in \cite{wangzhangQOT}. Fourth, we provide an abundant number of explicit examples, considerably enlarging the class in \cite{wangzhangQOT}.

Diamond transports have not appeared in the DOT framework before, and our paper offers the first attempt to analyze them. In terms of diamond transport in DOT, a full picture is far   from   clear. Our results and examples on DOT should be seen as exploratory.
Nevertheless, they illustrate that diamond transports appear not only in QOT, but also in other nonlinear formulations of OT.

The remainder of this paper is organized as follows.  Section~\ref{sec:setup} fixes the
notation and basic definitions.
Section~\ref{sec:diamond-type} formally defines diamond-type transports, and shows  the equivalence between an absolute-value principle
and the diamond-type.
Section~\ref{sec:main-results} states the
main QOT results and examples, including comparisons with
\cite{wangzhangQOT}.  
Section~\ref{sec:dot} develops the DOT counterpart,
including convexity, DOT--QOT overlap, and diamond-type optimality.
Section~\ref{sec:QAP} briefly discusses a discrete QAP problem inspired by the diamond transport.  
Section~\ref{sec:proofs} gives the proofs of the QOT
results in Section~\ref{sec:main-results}, which are our most technical results.
Section~\ref{sec:conclusion} concludes the paper.

\section{Setup and notation}\label{sec:setup}

This section fixes the notation and definitions used throughout the paper.  A probability
measure on \(\R\) is {symmetric about zero}, or simply {symmetric}, if it is invariant under the
map \(x\mapsto -x\).  Let $\Psym(\R)$ be the set of symmetric probability measures on $\R$, and \(\Pcsym(\R)\) be the set of symmetric compactly supported probability measures on $\R$.  For probability measures 
\(\mu,\nu\) on $\R$, let \(\Pi(\mu,\nu)\) be the set of
\emph{couplings} of \(\mu\) and \(\nu\).  For \(q>0\), let
\(\mathcal P_q(\R)\) be the set of probability measures on \(\R\) with
finite \(q\)-th absolute moment.  If \(\lambda\) is a compactly supported probability measure on $\R$, its
support diameter is
\(\operatorname{diam}(\operatorname{supp}\lambda)
:=\sup_{x,x'\in\operatorname{supp}\lambda}|x-x'|\).
Thus, when we say that the support diameters of \(\mu\) and \(\nu\) are denoted
by \(D_x\) and \(D_y\), this means
\(D_x=\operatorname{diam}(\operatorname{supp}\mu)\) and
\(D_y=\operatorname{diam}(\operatorname{supp}\nu)\). 

For a random element
\(Z\), write \(\law(Z)\) for its probability law.  For a probability law
\(\lambda\), write $F_{\lambda}$ for its cumulative distribution function and \(F_\lambda^{-1}\) for its usual left-continuous quantile
function on \((0,1)\).  For probability laws
\(\mu,\nu\) on \(\R\),
define the comonotone and antimonotone couplings as $\picom=\law(F_\mu^{-1}(U),F_\nu^{-1}(U))$ and $\piant=\piant(\mu,\nu)=\law(F_\mu^{-1}(U),F_\nu^{-1}(1-U))$, respectively, 
where \(U\sim\U(0,1)\). For a coupling \(\gamma\) on \([0,\infty)^2\), write
\(\gamma^\top\) for its transpose, namely the pushforward of \(\gamma\)
under the coordinate-swap map \((r,t)\mapsto(t,r)\). A coupling $\pi$ of \(\mu\) and \(\nu\) is
\emph{Monge from \(\mu\) to \(\nu\)} if it has the form
\((\mathrm{Id},T)_\#\mu\) for a Borel map \(T\).  For
laws \(\mu,\nu\) on \([0,\infty)\), we say that a coupling \(\pi\) of
\(\mu\) and \(\nu\) is \emph{Monge from \(\mu\) to \(\nu\) away
from zero} if there is a Borel \(T:(0,\infty)\to[0,\infty)\) such that
\(\pi\) is supported by
\(\{(r,t): r=0\text{ or }t=T(r)\}\).   For a measurable kernel
\(c:\R^2\times\R^2\to\R\), \(c(z,z')=c(z',z)\), the
\emph{quadratic-form cost (or energy)} is
\begin{equation}
\label{eq:quadratic-form-energy}
  \mathcal Q_c(\pi):=
  \iint c\big((x,y),(x',y')\big)
  \,\dd\pi(x,y)\,\dd\pi(x',y'),
  \qquad \pi\in\Pi(\mu,\nu),
\end{equation}
whenever the integral is well-defined, possibly with value \(+\infty\).
The minimization problems below are all instances of
\eqref{eq:quadratic-form-energy}.

Following \cite{wangzhangQOT}, a cost is called \emph{type-XX} if it has the
form
\begin{equation}
\label{eq:typexx-form}
  c(x,y,x',y')=H(f(x,x'),g(y,y'))
\end{equation}
for some real-valued bivariate functions \(f,g\) and a real-valued function
\(H\).\footnote{ The terminology contrasts with type-XY costs of the form $c(x,y,x',y')=H(f(x,y),g(x',y'))$; those costs are
not studied in this paper.}  The terminology indicates that the cost aggregates information from
the within-\(x\) pair \((x,x')\) and the within-\(y\) pair \((y,y')\).  A
standard example is the Gromov--Wasserstein cost
\cite{memoli2011gromov}.  All costs treated below will have the form
\eqref{eq:typexx-form}.

A kernel \(K\) on a set \(S\) is positive definite if
\(\sum_{i,j=1}^n a_i a_j K(z_i,z_j)\ge0\) for every \(n\), every
\(z_1,\dots,z_n\in S\), and every
\(a_1,\dots,a_n\in\R\).  It is strictly positive definite on finite
signed measures if, for every nonzero finite signed Borel measure
\(\sigma\) supported on \(S\) for which the integral is well-defined, one has
\(\iint K(z,z')\,\dd\sigma(z)\,\dd\sigma(z')>0\).
A function \(F:[0,\infty)\to\R\) is completely monotone if it is
\(C^\infty\) on \((0,\infty)\) and
\((-1)^kF^{(k)}(u)\ge0\) for \(u>0\) and \(k=0,1,2,\ldots\).

For
general symmetric marginals, we use the equivalent radius/sign definition:
if \(\mu_{\mathrm{abs}}\) and \(\nu_{\mathrm{abs}}\) are the laws of
\(|X|\), \(X\sim\mu\), and \(|Y|\), \(Y\sim\nu\), then
\(\pi_{\mathrm{dia}}=\law(\varepsilon A,\eta B)\), where
\(A=F_{\mu_{\mathrm{abs}}}^{-1}(U)\),
\(B=F_{\nu_{\mathrm{abs}}}^{-1}(1-U)\), \(U\sim\U(0,1)\), and
\(\varepsilon,\eta\) are independent Rademacher signs, independent of \(U\). Thus the absolute values are coupled antimonotonically, while the signs are
independent.  In particular, usually neither the diamond transport nor its transpose is Monge; the Monge conditions used later
concern only the absolute-value coupling.

A function \(G\) on a rectangle is \emph{supermodular} if
\[
  G(r_2,s_2)+G(r_1,s_1)
  \ge
  G(r_2,s_1)+G(r_1,s_2)
\]
whenever \(r_1\le r_2\) and \(s_1\le s_2\).  For \(C^2\) functions this is
equivalent to \(\partial^2G/\partial r\,\partial s\ge0\).
A function is \emph{submodular} if the reverse inequality holds.  For a
function defined only on a subset of a rectangle, either term means that the
corresponding four-point inequality holds whenever the four points appearing
in it all belong to the subset.

\section{Diamond-type transports in classical OT}\label{sec:diamond-type}

We first recall the definition of the diamond transport in the Introduction. 
 The \emph{diamond copula} \(C_{\mathrm{dia}}\) is the uniform probability law on
the boundary of the \(\ell^1\)-ball
\(\{(u,v)\in[0,1]^2: |u-1/2|+|v-1/2|=1/2\}\).  
For symmetric atomless marginals \(\mu,\nu\), the \emph{diamond transport} is
\(\pi_{\mathrm{dia}}=(F_\mu^{-1},F_\nu^{-1})_\# C_{\mathrm{dia}}\). In other words,
the copula is \(C_{\mathrm{dia}}\), although the support in physical
\((x,y)\)-coordinates may be curved by the marginal quantiles (see
Figure~\ref{fig:diamond-copula}).

The diamond transport can be enlarged to a natural
class of transports. 
We say that \(\pi\in\Pi(\mu,\nu)\) is \emph{diamond-type} if
\(\pi\ll\pi_{\rm dia}\). 
The next result shows  that for symmetric marginals, this is equivalent to
antimonotone matching of absolute values.  Further, when the marginals are
continuous, it is also equivalent to saying that the copula of \(\pi\) is
supported on \[
  \Delta_{\mathrm{dia}}
  :=
  \{(u,v)\in[0,1]^2: |u-1/2|+|v-1/2|=1/2\},
\] and the diamond transport corresponds
to the uniform law on \(\Delta_{\mathrm{dia}}\).

\begin{theorem}
\label{thm:absolute-value-classic}
Let \(\mu,\nu\in\mathcal{P}_{\rm sym}(\R)\), and set
\(\mu_{\mathrm{abs}}=|\cdot|_\#\mu\) and \(\nu_{\mathrm{abs}}=|\cdot|_\#\nu\). 
Let \(G:[0,\infty)^2\to[0,\infty]\) be lower semi-continuous and strictly
supermodular on \(\operatorname{supp}\mu_{\mathrm{abs}}\times\operatorname{supp}\nu_{\mathrm{abs}}\). Let
\(\gamma_{\rm ant}\in\Pi(\mu_{\mathrm{abs}},\nu_{\mathrm{abs}})\) be the antimonotone coupling and assume
\(\int G(r,s)\,\mathrm{d}\gamma_{\rm ant}(r,s)<\infty\).  For
\(\pi\in\Pi(\mu,\nu)\), the following statements are equivalent:
\begin{enumerate}
\item \(\pi\) minimizes
 $
  \int G(|x|,|y|)\,\mathrm{d}\pi(x,y)
 $
over \(\Pi(\mu,\nu)\);
\item \((|x|,|y|)_\#\pi=\gamma_{\rm ant}\);
\item \(\pi\ll\pi_{\rm dia}\) (i.e., $\pi$ is diamond-type).
\end{enumerate}
If the marginal distributions $\mu,\nu$ are continuous, these conditions are further
equivalent to \begin{enumerate}[resume] \item  the copula of \(\pi\) is supported on
\(\Delta_{\mathrm{dia}}\).\end{enumerate}
\end{theorem}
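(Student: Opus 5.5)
We reduce everything to the one-dimensional coupling of absolute values. The map \(\pi\mapsto(|x|,|y|)_\#\pi\) sends \(\Pi(\mu,\nu)\) into \(\Pi(\mu_{\mathrm{abs}},\nu_{\mathrm{abs}})\), and \(\int G(|x|,|y|)\,\dd\pi=\int G\,\dd\big((|x|,|y|)_\#\pi\big)\). This map is also onto. Given \(\gamma\in\Pi(\mu_{\mathrm{abs}},\nu_{\mathrm{abs}})\), take \((A,B)\sim\gamma\) and independent Rademacher signs \(\varepsilon,\eta\). Then \(\law(\varepsilon A,\eta B)\in\Pi(\mu,\nu)\), since \(\mu\) is symmetric and hence \(\mu=\law(\varepsilon|X|)\); the same holds for \(\nu\). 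Therefore the minimum of (i) equals the minimum of the classical problem \(\int G\,\dd\gamma\) over \(\Pi(\mu_{\mathrm{abs}},\nu_{\mathrm{abs}})\).

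\emph{(i)\(\Leftrightarrow\)(ii).} For a strictly supermodular cost, the antimonotone coupling is the unique optimizer of this classical problem. The standard argument runs as follows. An optimal \(\gamma\) exists by lower semicontinuity and tightness, and its cost is finite because \(\gamma_{\rm ant}\) has finite cost. Its support is \(G\)-cyclically monotone; this step needs some care since \(G\) may take the value \(+\infty\), but finiteness of the optimal cost lets us use the usual results for lsc costs. Now suppose the support contains \((r_1,s_1),(r_2,s_2)\) with \(r_1<r_2\) and \(s_1<s_2\). Strict supermodularity gives \(G(r_1,s_2)+G(r_2,s_1)<G(r_1,s_1)+G(r_2,s_2)\), so swapping improves the cost, contradicting cyclical monotonicity. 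Hence the support is antitone, and the only coupling with antitone support is \(\gamma_{\rm ant}\). Combined with the reduction, \(\pi\) is optimal iff \((|x|,|y|)_\#\pi\) is optimal iff \((|x|,|y|)_\#\pi=\gamma_{\rm ant}\). We must keep in mind that strict supermodularity is assumed only on \(\operatorname{supp}\mu_{\mathrm{abs}}\times\operatorname{supp}\nu_{\mathrm{abs}}\). This suffices, because the support of any coupling lies in that product.

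\emph{(ii)\(\Leftrightarrow\)(iii).} By definition, \(\pi_{\rm dia}=\law(\varepsilon A,\eta B)\) with \((A,B)\sim\gamma_{\rm ant}\).

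For (iii)\(\Rightarrow\)(ii), note that \((|x|,|y|)_\#\pi\ll(|x|,|y|)_\#\pi_{\rm dia}=\gamma_{\rm ant}\), so \((|x|,|y|)_\#\pi\) is concentrated on an antitone set that supports \(\gamma_{\rm ant}\). A coupling of \(\mu_{\mathrm{abs}},\nu_{\mathrm{abs}}\) concentrated on an antitone set must equal \(\gamma_{\rm ant}\): its joint distribution function is forced to be the lower Fréchet bound. This is the standard characterisation of countermonotonicity.

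For (ii)\(\Rightarrow\)(iii), disintegrate \(\pi\) given \((|x|,|y|)=(r,s)\). The conditional law lives on the at most four points \((\pm r,\pm s)\). Under \(\pi_{\rm dia}\), the conditional law given \((r,s)\) charges every such point: with mass \(1/4\) each when \(r,s>0\), and in the degenerate cases \(r=0\) or \(s=0\) it charges the collapsed points. Hence any measure with the same \((|x|,|y|)\)-marginal \(\gamma_{\rm ant}\) is absolutely continuous with respect to \(\pi_{\rm dia}\). More precisely, if \(\pi_{\rm dia}(E)=0\), then for \(\gamma_{\rm ant}\)-a.e. \((r,s)\) the fibre \(E\cap\{(\pm r,\pm s)\}\) is empty, so \(\pi(E)=0\).

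\emph{(iii)\(\Leftrightarrow\)(iv) for continuous marginals.} Use the copula transform \(u=F_\mu(x)\), \(v=F_\nu(y)\), which is a bijection of couplings and copulas. By symmetry, \(|2u-1|=F_{\mu_{\mathrm{abs}}}(|x|)\), and similarly \(|2v-1|=F_{\nu_{\mathrm{abs}}}(|y|)\). Condition (ii) then says \(|2u-1|=1-|2v-1|\) a.s., which is exactly \(|u-\tfrac12|+|v-\tfrac12|=\tfrac12\), i.e. support in \(\Delta_{\mathrm{dia}}\). The converse runs the same way.

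The main obstacle is the uniqueness in (i)\(\Rightarrow\)(ii) when \(G\) may be infinite and atoms are present. For this step I would first rely on the cyclical-monotonicity swap argument together with the fact that atoms do not break uniqueness of the antimonotone coupling.
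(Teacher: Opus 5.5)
Your proposal is correct and follows the paper's proof essentially step for step. The steps are the same: reduce to the absolute-value coupling, use uniqueness of the antimonotone coupling for strictly supermodular costs to get (i)$\Leftrightarrow$(ii), disintegrate over the fibres $\{(\pm r,\pm s)\}$ for (ii)$\Leftrightarrow$(iii), and use the identity $F_{\mu_{\mathrm{abs}}}(|x|)=|2F_\mu(x)-1|$ for (iv). The only differences are minor: you lift $\gamma$ with independent Rademacher signs (which uses symmetry) where the paper uses disintegration kernels (which do not), and you sketch the cyclical-monotonicity swap argument where the paper cites Puccetti--Wang's rearrangement theorem.
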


In particular, the diamond transport is a minimizer to the problem in item (i) of Theorem \ref{thm:absolute-value-classic}.
The equivalence between (i) and (ii) does not require symmetry of
\(\mu,\nu\); symmetry is used only to identify the antimonotone
absolute-value condition  in (ii) with  the  diamond-type condition in (iii).

\begin{proof}
For \(\pi\in\Pi(\mu,\nu)\), let
 $
  \gamma_\pi=(|x|,|y|)_\#\pi .
$
Then \(\gamma_\pi\in\Pi(\mu_{\mathrm{abs}},\nu_{\mathrm{abs}})\) and
\[
  \int G(|x|,|y|)\,\mathrm{d}\pi(x,y)
  =
  \int G(r,s)\,\mathrm{d}\gamma_\pi(r,s).
\]
Conversely, every \(\gamma\in\Pi(\mu_{\mathrm{abs}},\nu_{\mathrm{abs}})\) can be lifted to a coupling of
\(\mu\) and \(\nu\).  Indeed, disintegrate \(\mu\) and \(\nu\) with respect to
the maps \(x\mapsto |x|\) and \(y\mapsto |y|\), and write
\(\kappa_\mu(r,\mathrm{d}x)\) and \(\kappa_\nu(s,\mathrm{d}y)\) for regular
conditional laws supported on \(\{|x|=r\}\) and \(\{|y|=s\}\).  Then
\[
  \int \kappa_\mu(r,\mathrm{d}x)\kappa_\nu(s,\mathrm{d}y)
  \,\mathrm{d}\gamma(r,s)
\]
belongs to \(\Pi(\mu,\nu)\) and has absolute-value coupling \(\gamma\).  Hence
minimizing over \(\Pi(\mu,\nu)\) is equivalent to minimizing
\(\int G(r,s)\,\mathrm{d}\gamma(r,s)\) over \(\Pi(\mu_{\mathrm{abs}},\nu_{\mathrm{abs}})\).  The
one-dimensional rearrangement inequality for lower semi-continuous strictly
supermodular costs (\cite[Theorem~3.1]{puccetti2015extremal}) gives the unique minimizer \(\gamma_{\rm ant}\).  This
proves the equivalence of (i) and (ii).

We next show that (ii) and (iii) are equivalent.  Since \(\mu\) and \(\nu\)
are symmetric, the diamond transport has the disintegration
\[
  \pi_{\rm dia}(\mathrm{d}x,\mathrm{d}y)
  =
  \int K_{r,s}(\mathrm{d}x,\mathrm{d}y)
  \,\mathrm{d}\gamma_{\rm ant}(r,s),
\]
where \(K_{r,s}\) is the uniform law on the distinct points
\(\{(x,y): |x|=r,\ |y|=s\}\).  If (ii) holds, disintegrate \(\pi\)
conditionally on \((|X|,|Y|)=(r,s)\).  Each conditional law is supported on
\(\{(x,y): |x|=r,\ |y|=s\}\), and is therefore absolutely continuous with
respect to \(K_{r,s}\).  Since the radius law is \(\gamma_{\rm ant}\), this
gives \(\pi\ll\pi_{\rm dia}\).

Conversely, suppose \(\pi\ll\pi_{\rm dia}\).  Then
\(\gamma_\pi\ll\gamma_{\rm ant}\).  The antimonotone coupling
\(\gamma_{\rm ant}\) is supported on an antimonotone set: it is the law of
\((F_{\mu_{\mathrm{abs}}}^{-1}(U),F_{\nu_{\mathrm{abs}}}^{-1}(1-U))\), \(U\sim\U(0,1)\).  Hence
\(\gamma_\pi\) is also supported on an antimonotone set. Therefore, it is the antimonotone coupling between $\mu_{\mathrm{abs}}$ and $\nu_{\mathrm{abs}}$, and thus equal to
\(\gamma_{\rm ant}\).  This proves (ii) and (iii) are equivalent.
 
It remains  to show that the equivalence between (iii) and (iv)   records the copula formulation when the marginals are
continuous.  Let \((X,Y)\sim\pi\), \(U=F_\mu(X)\), and \(V=F_\nu(Y)\).  Then
\((U,V)\) is the unique copula of \(\pi\).  By symmetry,
\[
  F_{\mu_{\mathrm{abs}}}(|X|)=|2U-1|,
  \qquad
  F_{\nu_{\mathrm{abs}}}(|Y|)=|2V-1|
\]
almost surely.  Because \(\mu_{\mathrm{abs}}\) and \(\nu_{\mathrm{abs}}\) are continuous, the condition
\((|X|,|Y|)_\#\pi=\gamma_{\rm ant}\) is equivalent to
\[
  F_{\mu_{\mathrm{abs}}}(|X|)+F_{\nu_{\mathrm{abs}}}(|Y|)=1
  \quad\text{a.s.}
\]
This is exactly
\[
  |U-1/2|+|V-1/2|=1/2
  \quad\text{a.s.},
\]
which means that the copula of \(\pi\) is supported on
\(\Delta_{\mathrm{dia}}\).  This proves the equivalence between (iii) and (iv).
\end{proof}

\begin{example}
The copula formulation in Theorem~\ref{thm:absolute-value-classic}, item (iv) cannot be
extended to arbitrary atoms.  Let
\[
  \mu=\frac12 \delta_{-1}+\frac12 \delta_1,
  \qquad
  \nu=\frac14(\delta_{-2}+\delta_{-1}+\delta_1+\delta_2).
\]
Then
\[
  \pi_{\rm dia}
  =
\frac18
  \sum_{x\in\{-1,1\}}
  \sum_{y\in\{-2,-1,1,2\}}
  \delta_{(x,y)}.
\]
Define
\[
  \pi
  =
  \frac14\delta_{(1,-1)}
  +\frac14\delta_{(1,1)}
  +\frac14 \delta_{(-1,-2)}
  +\frac14\delta_{(-1,2)}.
\]
Then \(\pi\in\Pi(\mu,\nu)\), \(\pi\ll\pi_{\rm dia}\), and
\((|x|,|y|)_\#\pi=(|x|,|y|)_\#\pi_{\rm dia}\).  Hence \(\pi\) is
diamond-type. 
We will check that no copula \(C\) satisfying
\((F_\mu^{-1},F_\nu^{-1})_\#C=\pi\) can be supported on
\(\Delta_{\mathrm{dia}}\).  Indeed, the quantile cells give
\[
  X=1 \iff U\in(1/2,1],
  \qquad
  |Y|=1 \iff V\in(1/4,3/4].
\]
Thus such a copula would need
\[
  C((1/2,1]\times(1/4,3/4])
  =
  \pi(X=1,\ |Y|=1)
  =
  1/2.
\]
On \(\Delta_{\mathrm{dia}}\), however, the set
\((1/2,1]\times(1/4,3/4]\) is contained in
\([3/4,1]\times[0,1]\).  Since the first marginal of a copula is uniform,
\[
  C((1/2,1]\times(1/4,3/4])
  \le
  C([3/4,1]\times[0,1])
  =
  1/4,
\]
a contradiction.
\end{example}

\section{Diamond transport in QOT}
\label{sec:main-results}

In this section, we state our main QOT results:
\begin{itemize}
    \item Theorem \ref{thm:main} gives an 
abstract convex diamond criterion for costs of the form \(c(x,y,x',y')=h(|x-x'|,|y-y'|)\). 
\item Theorem \ref{thm:taylor-products} turns it into concrete
families of product costs of the form $c(x,y,x',y')
  =
  \psi_1(|x-x'|)\psi_2(|y-y'|)$, employing a Taylor expansion trick. 
  \item  Theorem \ref{thm:q-rectangular} treats
costs of the form $|x-x'|^p|y-y'|^q,~p,q\in(1,2]$ and provides a full characterization of the uniqueness of the solution.
\end{itemize}
We also provide examples following these results.
The proofs are very technical and presented in Section~\ref{sec:proofs}.

\subsection{An abstract criterion for the optimality of the diamond transport}\label{31}

We first state the main assumption on the marginals and the type-XX cost function for the QOT problem. 

\begin{assumption}
\label{ass:convex-diamond}
Let \(\mu,\nu\in\Pcsym(\R)\), with finite support diameters \(D_x,D_y\), and
let 
$$c(x,y,x',y')=h(|x-x'|,|y-y'|).$$
The following conditions hold:
\begin{enumerate}
\item \(h\) is continuous on \([0,D_x]\times[0,D_y]\) and \(C^2\) on
\((0,D_x)\times(0,D_y)\);
\item the kernel \(((x,y),(x',y'))\mapsto c(x,y,x',y')\) is positive
definite on
\(\operatorname{conv}(\operatorname{supp}\mu)\times
\operatorname{conv}(\operatorname{supp}\nu)\);
\item writing \(W(r,s)=\partial^2h(r,s)/\partial r\,\partial s\), \(W\) has
a finite continuous extension to \([0,D_x]\times[0,D_y]\), and this extension
is nonnegative, nondecreasing in each coordinate, and supermodular.
\end{enumerate}
\end{assumption}
Assumption~\ref{ass:convex-diamond} has two roles.  Positive definiteness ensures that $\pi\mapsto \mathcal Q_c(\pi)$ is convex.  The conditions on \(W\) are related to the optimality of the antimonotone coupling of the
radii. Our next result provides sufficient conditions for the diamond transport to be the unique minimizer under Assumption~\ref{ass:convex-diamond}.
\begin{theorem}
\label{thm:main}
 Suppose that
Assumption~\ref{ass:convex-diamond} holds for \(\mu,\nu,c\).  Then the diamond
transport \(\pi_{\mathrm{dia}}\) minimizes \(\mathcal Q_c\) over
\(\Pi(\mu,\nu)\).  If the kernel is strictly positive definite on finite
signed measures supported on \(\operatorname{supp}\mu\times\operatorname{supp}\nu\),
then \(\pi_{\mathrm{dia}}\) is the unique minimizer.
\end{theorem}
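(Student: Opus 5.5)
The plan has two steps, as announced in the introduction: first reduce global optimality to a first-variation inequality via convexity, then prove that inequality by a rearrangement argument.

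\textbf{Step 1: convexity.} For \(\pi\in\Pi(\mu,\nu)\) set \(\sigma=\pi-\pi_{\rm dia}\). This is a signed measure with zero marginals, supported in \(\operatorname{conv}(\operatorname{supp}\mu)\times\operatorname{conv}(\operatorname{supp}\nu)\). Expanding the quadratic form gives
\[
\mathcal Q_c(\pi)-\mathcal Q_c(\pi_{\rm dia})=2\iint c\,\dd\pi_{\rm dia}\,\dd\sigma+\iint c\,\dd\sigma\,\dd\sigma .
\]
By Assumption~\ref{ass:convex-diamond}(ii), together with a routine approximation of \(\sigma\) by discrete measures using continuity of \(h\) on a compact set, the last term is \(\ge0\). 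Under strict positive definiteness it is \(>0\) unless \(\sigma=0\), which will give uniqueness. It therefore suffices to show the first-variation inequality
\[
\int \Phi\,\dd\pi\ \ge\ \int\Phi\,\dd\pi_{\rm dia}\quad\text{for all }\pi\in\Pi(\mu,\nu),\qquad
\Phi(x,y):=\int h(|x-x'|,|y-y'|)\,\dd\pi_{\rm dia}(x',y').
\]
This is a classical OT problem with cost \(\Phi\).

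\textbf{Step 2: structure of \(\Phi\).} Use the radius/sign representation \(\pi_{\rm dia}=\law(\varepsilon A,\eta B)\). Averaging over the independent signs gives
\[
\Phi(x,y)=\tfrac14\,\E\sum_{\pm,\pm} h(|x\mp A|,|y\mp B|).
\]
This is even in \(x\) and in \(y\), so \(\Phi(x,y)=G(|x|,|y|)\) for some \(G\) on \([0,D_x/2]\times[0,D_y/2]\). By Theorem~\ref{thm:absolute-value-classic}, which was stated with lower semicontinuous strictly supermodular \(G\), it is enough to show that \(G\) is supermodular. The non-strict version of the rearrangement inequality still makes the antimonotone coupling optimal, and hence makes \(\pi_{\rm dia}\) optimal, which is all Step 1 needs. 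Since \(\sigma=0\) is forced by strictness in Step 1, strict supermodularity is never required.

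\textbf{Step 3: supermodularity of \(G\), the main obstacle.} Take \(0\le r_1\le r_2\) and \(0\le s_1\le s_2\). For fixed \((a,b)\), consider the symmetrized function
\[
g_{a,b}(r,s)=\sum_{\pm,\pm}h(|r\mp a|,|s\mp b|).
\]
I would try to show that the mixed difference of each \(g_{a,b}\) over the rectangle \([r_1,r_2]\times[s_1,s_2]\) is nonnegative. Write it as \(\int_{r_1}^{r_2}\!\int_{s_1}^{s_2}\partial_r\partial_s g_{a,b}\), interpreted distributionally. The second derivative of \((r,s)\mapsto h(|r-\alpha|,|s-\beta|)\) equals \(\operatorname{sgn}(r-\alpha)\operatorname{sgn}(s-\beta)\,W(|r-\alpha|,|s-\beta|)\), and there are no singular parts, because the kinks along \(r=\alpha\) or \(s=\beta\) contribute only to pure second derivatives, not to the mixed one.

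So the claim reduces to a quadrant inequality. For \(r,s,a,b\ge0\),
\[
\sum_{\epsilon,\eta=\pm1}\operatorname{sgn}(r-\epsilon a)\operatorname{sgn}(s-\eta b)\,W(|r-\epsilon a|,|s-\eta b|)\ \ge\ 0 .
\]
Since \(a,b\ge0\), the terms with \(\epsilon=-1\) or \(\eta=-1\) have positive sign factors. Split into cases according to whether \(r\gtrless a\) and \(s\gtrless b\).

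\emph{Case \(r>a\), \(s>b\).} All four signs are \(+\) and \(W\ge0\) finishes.

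\emph{Mixed case, say \(r<a\), \(s>b\).} The sum is
\[
-W(a-r,s-b)-W(a-r,s+b)+W(r+a,s-b)+W(r+a,s+b).
\]
This is \(\ge0\) because \(W\) is nondecreasing in its first coordinate and \(a-r\le a+r\).

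\emph{Case \(r<a\), \(s<b\).} The sum is
\[
W(a-r,b-s)-W(a-r,s+b)-W(r+a,b-s)+W(r+a,s+b).
\]
This is exactly the supermodularity increment of \(W\) at the points \(a-r\le a+r\) and \(b-s\le b+s\), so it is \(\ge0\).

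Boundary cases such as \(r=a\) have measure zero in the integral. Continuity of the extension of \(W\) justifies integrating up to the boundary. The arguments of \(h\) stay in \([0,D_x]\times[0,D_y]\) because \(|r\pm a|\le D_x\), and similarly for \(s\pm b\).

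Integrating the quadrant inequality against \(\gamma_{\rm ant}(\dd a,\dd b)\) gives supermodularity of \(G\), which completes the first-variation inequality. The technical care lies in the integration-by-parts/distributional step and the edge terms where \(W\) is only given by continuous extension. I would handle these by first working with \(r_1,s_1>0\) away from the kink lines, then passing to limits using continuity of \(h\).
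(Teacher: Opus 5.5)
Your proposal is correct and is essentially the paper's argument: positive definiteness reduces global optimality to the linearized OT problem (Lemma~\ref{lem:gradient}), the same four-case sign analysis of \(W\) as in Lemma~\ref{lem:sign} gives supermodularity of the radial first variation, and the rearrangement inequality then makes the antimonotone radius coupling, hence \(\pi_{\mathrm{dia}}\), optimal, with uniqueness coming from the strictly positive quadratic term. The one real difference is that you use the radius/sign representation for arbitrary compactly supported symmetric marginals from the start, which legitimately bypasses the paper's reduction to atomless marginals and its approximation step via QOT stability, since none of your steps uses atomlessness.
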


\begin{remark}
\label{rem:comp1}
In the compact case, Theorem~\ref{thm:main} recovers the compact specialization
of the convex-diamond class from \cite[Theorem 12]{wangzhangQOT}.  That
result treats
\(c(x,y,x',y')=\phi(|x-x'|^2)\phi(|y-y'|^2)\), where \(\phi\) is completely
monotone and satisfies
\(\phi'(u)+2u\phi''(u)\le0\) on the relevant squared-distance range.  Indeed,
putting \(a(r)=\phi(r^2)\), Schoenberg's theorem and the Schur product theorem
give positive definiteness of the product kernel, while
\(a'(r)=2r\phi'(r^2)\le0\) and
\(a''(r)=2(\phi'(r^2)+2r^2\phi''(r^2))\le0\).  Hence
\(\partial^2(a(r)a(s))/\partial r\,\partial s=a'(r)a'(s)\) satisfies the
monotonicity and supermodularity of Assumption \ref{ass:convex-diamond}(iii).
\end{remark}

\begin{remark}
\label{rem:weighted-square}
Theorem~\ref{thm:main} also gives a simple sufficient criterion for
nonseparable kernels of the form
\[
  c(x,y,x',y')
  =
  F\big(a|x-x'|^2+b|y-y'|^2\big),
  \qquad a,b>0.
\]
In this case, a sufficient condition for the positive definiteness of $c$ is the complete monotonicity of $F$. Indeed, complete monotonicity enters through the Laplace representation
\[
  F(u)=\int_{[0,\infty)} e^{-\lambda u}\,\rho(\mathrm{d}\lambda),
\]
so \(F(a|x-x'|^2+b|y-y'|^2)\) is a positive mixture of Gaussian positive
definite kernels. The remaining assumptions from Assumption \ref{ass:convex-diamond} are satisfied if moreover
\[
  F''(u)+2uF'''(u)\ge0,\qquad 0\le u\le aD_x^2+bD_y^2.
\]
This can be proved via a direct computation, which we omit here for simplicity.

\end{remark}

The next example gives two concrete kernels obtained from this criterion.

\begin{example}
\label{ex:weighted-kernels}
Let \(\mu,\nu\in\Pcsym(\R)\), with finite support diameters
\(D_x,D_y\), and let \(a,b>0\).

\begin{enumerate}
\item Let \(\gamma>0\), \(\beta>0\), and
\[
  c(x,y,x',y')
  =
  \big(\beta+a|x-x'|^2+b|y-y'|^2\big)^{-\gamma}.
\]
If $\beta\ge (2\gamma+3)\max\{aD_x^2,bD_y^2\},$  the diamond transport is the unique QOT minimizer.

\item Set
\[
  c(x,y,x',y')=\exp\big(-(a|x-x'|^2+b|y-y'|^2)\big).
\]
If $\max\{2aD_x^2,2bD_y^2\}\leq 1$, the diamond transport is the unique QOT minimizer.
\end{enumerate}
Note also that the constants \(a,b\) can be absorbed into a
coordinate rescaling of the marginals, but keeping them visible makes the diameter conditions
explicit.    Thus, the Gaussian example (ii) overlaps with
\cite[Example 14(i)]{wangzhangQOT} after choosing weights,\footnote{We remark that the published version of \cite[Example 14(i)]{wangzhangQOT} contains a typo: the condition $-\alpha+2u\alpha^2\leq 0$ should be imposed.} whereas the inverse
power kernel (i) is generally nonseparable and is not covered by that example. 

\end{example}

\subsection{Product-form type-XX costs minimized by the diamond transport}\label{32}

The point of the next result is that Theorem~\ref{thm:main} can be used in
less immediate ways, which can generate larger families of costs with the diamond transport being the optimizer.

In the following, we let $\Xi$ denote the class of functions $\psi: [0,\infty)\to[0,\infty)$ that satisfy the following conditions:
\begin{itemize}
    \item $\psi(0)=0$ and $\psi$ is continuous, nondecreasing, convex, and $C^2$ on $(0,\infty)$;
    \item for every \(\alpha>0\), $(u,v)\mapsto e^{-\alpha\psi(|u-v|)}$
is positive definite on \(\mathbb R\).
\end{itemize}
We say that $\psi$ has \textit{full-support cosine representation} if 
\begin{equation}
\label{eq:cosine-representation}
  \psi(|u|)
  =
  \int_{\R}(1-\cos(tu))\,\Lambda(\dd t),
\end{equation}
where \(\Lambda\) is a positive symmetric L\'{e}vy measure on
\(\R\setminus\{0\}\), Radon on \(\R\setminus\{0\}\), whose support has
closure \(\R\), satisfies
\(\int_{\R}(1\wedge t^2)\,\Lambda(\dd t)<\infty\), and satisfies
\(\int_{\R}(1-\cos(tu))\,\Lambda(\dd t)<\infty\) for every \(u\in\R\). In particular, if $\psi$ has full-support cosine representation, then \(u\mapsto\psi(|u|)\) is continuous negative definite, and $(u,v)\mapsto e^{-\alpha\psi(|u-v|)}$
is positive definite for every \(\alpha>0\).

\begin{example}\label{ex:pre}
    Let $\psi(u)=u^r,\,r\in[1,2]$. Then clearly $\psi(0)=0$ and $\psi$ is continuous, nondecreasing, convex, and $C^2$ on $(0,\infty)$. The representation
\[
  |u|^r=c_r\int_{\R}(1-\cos(tu))|t|^{-1-r}\,\dd t,
  \qquad \text{for some }c_r>0,~r\in(0,2)
\] gives continuous negative definiteness and a
full-support cosine representation; see
\cite[Chapter~4]{schilling2012bernstein}. The case $r=2$ belongs to $\Xi$ (by the positive definiteness of the Gaussian kernel) but does not have a full-support cosine representation. Other examples that satisfy both conditions include:
\begin{itemize}
    \item $\psi(r)=(1+\lambda r^{a})^{\theta}-1$, where $a\in(1,2],\,\theta\in(0,1),\,\lambda>0$, and $a\theta\in[1,2)$;
    \item $\psi(r)=1-e^{-\lambda r^p}$, where $p\in(1,2)$ and $\lambda>0$; note that $\psi$ is convex only on $[0,(\frac{p-1}{\lambda p})^{1/p}]$;
    \item $\psi(r)=\log(1+\lambda r^p)$, where $p\in(1,2)$ and $\lambda>0$; note that $\psi$ is convex only on $[0,(\frac{p-1}{\lambda})^{1/p}]$.
\end{itemize}
See Example \ref{ex:product-profiles} below.
\end{example}

\begin{theorem}\label{thm:taylor-products}
Let \(\mu,\nu\in\Psym(\R)\) and $\psi_1,\psi_2\in\Xi$.   Then the following statements hold.
\begin{enumerate}
    \item The diamond transport
\(\pi_{\mathrm{dia}}\) minimizes, in the extended sense,
\begin{align}
     \mathcal Q(\pi)
  =
  \iint
  \psi_1(|x-x'|)\psi_2(|y-y'|)
  \,\dd\pi(x,y)\,\dd\pi(x',y')\label{eq:taylor-product-cost}
\end{align}
over \(\Pi(\mu,\nu)\).
\item If, moreover,
\begin{align}
    \int \psi_1(|x|)\,\mu(\dd x)<\infty
  \qquad\text{and}\qquad
  \int \psi_2(|y|)\,\nu(\dd y)<\infty,\label{eq:a2}
\end{align}
then \(\mathcal Q(\pi_{\mathrm{dia}})<\infty\).
\item If, in addition to \eqref{eq:a2}, each
\(u\mapsto\psi_i(|u|)\) has the full-support cosine representation, then \(\pi_{\mathrm{dia}}\) is the unique
minimizer.
\end{enumerate}
  
\end{theorem}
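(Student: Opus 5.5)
The plan is to derive part (i) from Theorem~\ref{thm:main} by a three-level approximation: truncate the marginals, regularize the $\psi_i$, and exponentiate the cost. Part (ii) is a direct moment bound, and part (iii) is a separate strict-convexity argument in Fourier variables.

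\emph{Step 1 (the Taylor trick on compact marginals).} Assume first that $\mu,\nu\in\Pcsym(\R)$ have diameters $D_x,D_y$. Fix $\varepsilon>0$ and put $\psi_i^\varepsilon(r)=\psi_i(r)+\varepsilon r^2$. These functions still lie in $\Xi$, since $e^{-\alpha\psi_i^\varepsilon}$ is a product of two positive definite kernels, and they satisfy $(\psi_i^\varepsilon)''\ge 2\varepsilon$. For $\alpha>0$ define
\[
h_\alpha(r,s)=\alpha^{-2}\bigl(1-e^{-\alpha\psi_1^\varepsilon(r)}\bigr)\bigl(1-e^{-\alpha\psi_2^\varepsilon(s)}\bigr).
\]
Expanding the product gives four terms: a constant, a term depending only on $|x-x'|$, a term depending only on $|y-y'|$, and $\alpha^{-2}e^{-\alpha\psi_1^\varepsilon(|x-x'|)}e^{-\alpha\psi_2^\varepsilon(|y-y'|)}$. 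The first three integrate under $\pi\otimes\pi$ to values fixed by the marginals. So minimizing $\mathcal Q_{h_\alpha}$ is the same as minimizing the QOT problem with the last kernel, which is positive definite by the Schur product theorem. This verifies Assumption~\ref{ass:convex-diamond}(ii) in the form actually used in the proof of Theorem~\ref{thm:main}, namely convexity plus a first-variation inequality; I would check that adding marginal-only terms does not affect that argument.

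For Assumption~\ref{ass:convex-diamond}(iii), write $a_i=1-e^{-\alpha\psi_i^\varepsilon}$, so that $W=\alpha^{-2}a_1'(r)a_2'(s)$. Here $a_i'=\alpha(\psi_i^\varepsilon)'e^{-\alpha\psi_i^\varepsilon}\ge0$, and
\[
a_i''=\alpha\bigl((\psi_i^\varepsilon)''-\alpha((\psi_i^\varepsilon)')^2\bigr)e^{-\alpha\psi_i^\varepsilon}.
\]
Since $(\psi_i^\varepsilon)'$ is bounded on the compact range and $(\psi_i^\varepsilon)''\ge2\varepsilon$, we get $a_i''\ge0$ for $\alpha$ small. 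A product of two nonnegative nondecreasing functions is nonnegative, coordinatewise nondecreasing and supermodular. Continuity of $W$ up to $0$ follows from the existence of $\psi_i'(0+)$ by convexity.

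Theorem~\ref{thm:main} then gives $\mathcal Q_{h_\alpha}(\pi_{\rm dia})\le\mathcal Q_{h_\alpha}(\pi)$ for every $\pi\in\Pi(\mu,\nu)$. Let $\alpha\downarrow0$: $h_\alpha\to\psi_1^\varepsilon\psi_2^\varepsilon$ uniformly on the compact range. Then let $\varepsilon\downarrow0$ by monotone convergence. This proves (i) for compact marginals.

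\emph{Step 2 (removing compactness).} For general symmetric $\mu,\nu$, assume $\mathcal Q(\pi)<\infty$ for some competitor $\pi$; otherwise there is nothing to prove. Take $\pi_n$ to be $\pi$ conditioned on $\{|x|\le n,|y|\le n\}$ and symmetrized under $(x,y)\mapsto(-x,-y)$, which does not change $\mathcal Q$. Its marginals $\mu_n,\nu_n$ are compact and symmetric and converge weakly to $\mu,\nu$. Moreover $\mathcal Q(\pi_n)\to\mathcal Q(\pi)$, because the unnormalized restrictions increase and the normalizing mass tends to $1$. The diamond transports $\pi_{\rm dia}(\mu_n,\nu_n)$ converge weakly to $\pi_{\rm dia}(\mu,\nu)$ by the radius/sign representation, since the absolute-value quantiles converge. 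Lower semicontinuity of $\mathcal Q$ under weak convergence (Fatou, with $\psi_i\ge0$ continuous) then gives
\[
\mathcal Q(\pi_{\rm dia})\le\liminf_n\mathcal Q(\pi_{\rm dia}(\mu_n,\nu_n))\le\lim_n\mathcal Q(\pi_n)=\mathcal Q(\pi).
\]
I expect this truncation step, namely making the competitors' marginals match exactly while keeping $\mathcal Q$ convergent, to be the main technical obstacle. Conditioning plus symmetrizing is the first thing I would try.

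\emph{Part (ii).} Use the bound $\psi(|u-v|)\le\psi(2\max(|u|,|v|))$. Convexity together with the $\Xi$ property gives quasi-subadditivity, e.g.\ $\psi(2r)\le C\psi(r)$ on the relevant range; for $r^p$ this is immediate, and I would derive it from the negative-definite structure of $\psi(|\cdot|)$. Under $\pi_{\rm dia}$ the pairs $(x,x')$ and $(y,y')$ interact through antimonotone radii with independent signs. Combining the two bounds with \eqref{eq:a2} gives finiteness.

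\emph{Part (iii).} Using \eqref{eq:cosine-representation}, write
\[
\psi_1\psi_2=\iint(1-\cos tu)(1-\cos sv)\,\Lambda_1(\dd t)\Lambda_2(\dd s),
\]
and expand the product. By \eqref{eq:a2} the marginal-only pieces are finite constants, and we obtain
\[
\mathcal Q(\pi)=C_{\mu,\nu}+\iint\bigl|\hat\pi(t,s)\bigr|^2\,\Lambda_1(\dd t)\Lambda_2(\dd s),
\]
where the $\cos(tu)\cos(sv)$ term produces $\operatorname{Re}$ of $\hat\pi(t,s)\overline{\hat\pi(t,s)}$ after symmetrization in $s$. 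This makes $\pi\mapsto\mathcal Q(\pi)$ strictly convex: if $\pi\ne\pi'$ are both minimizers, their characteristic functions differ on an open set, which has positive $\Lambda_1\otimes\Lambda_2$-measure by full support and continuity. Then the midpoint has strictly smaller cost, a contradiction. Since (i) shows $\pi_{\rm dia}$ is a minimizer, it is the unique one.
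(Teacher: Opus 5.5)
\paragraph{Overall.} Your Step 1 is sound and is essentially the paper's Steps I--II. Writing the cost as $\alpha^{-2}(1-e^{-\alpha\psi_1^\varepsilon})(1-e^{-\alpha\psi_2^\varepsilon})$ makes the marginal cancellation exact rather than up to $O(\alpha^3)$. You also need no re-examination of the proof of Theorem~\ref{thm:main}: it applies directly to $e^{-\alpha\psi_1^\varepsilon}e^{-\alpha\psi_2^\varepsilon}$, whose mixed derivative is $a_1'(r)a_2'(s)$, a positive multiple of yours. Part (iii) has a second, more technical flaw (below).

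\paragraph{The gap is in Step 2.} The energy is quadratic in $\pi$, so mixing with the reflection $R(x,y)=(-x,-y)$ does change it:
\[
\mathcal Q\Bigl(\tfrac12(\tilde\pi_n+R_\#\tilde\pi_n)\Bigr)=\tfrac12\mathcal Q(\tilde\pi_n)+\tfrac12\iint\psi_1(|x+x'|)\psi_2(|y+y'|)\,\dd\tilde\pi_n(x,y)\,\dd\tilde\pi_n(x',y').
\]
For instance, $\delta_{(1,1)}$ has zero energy, while its symmetrization has energy $\tfrac12\psi_1(2)\psi_2(2)$. Hence $\mathcal Q(\pi_n)\to\mathcal Q(\tfrac12(\pi+R_\#\pi))$, not $\mathcal Q(\pi)$.

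The direction happens to be favourable, but concluding requires $\mathcal Q(\tfrac12(\pi+R_\#\pi))\le\mathcal Q(\pi)$. That is, you need $\iint\psi_1(|x-x'|)\psi_2(|y-y'|)\,\dd\sigma\,\dd\sigma\ge0$ for the zero-marginal measure $\sigma=\pi-R_\#\pi$, which has unbounded support. This is a convexity statement for non-compact marginals under only the $\Xi$ hypotheses. You have not proved it, and it needs its own argument, for example a version of the Fourier computation of (iii) that allows a Gaussian component such as $\psi_i(r)=r^2$ and has no full-support assumption.

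The root problem is that conditioning on a box destroys the symmetry of the marginals. The paper sidesteps this with coordinatewise clipping $T_n(x)=(-n)\vee(x\wedge n)$:
\begin{itemize}
\item since $T_n$ is odd, $(T_n,T_n)_\#\pi$ already has symmetric marginals;
\item since $T_n$ is nondecreasing, $(T_n,T_n)_\#\pi_{\rm dia}$ is exactly the diamond transport of the clipped marginals;
\item since $|T_n(x)-T_n(x')|\uparrow|x-x'|$ and $\psi_i$ is nondecreasing, both energies increase to their limits by monotone convergence, so no weak convergence or lower semicontinuity is needed.
\end{itemize}

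\paragraph{The flaw in (iii).} The decomposition $\mathcal Q(\pi)=C_{\mu,\nu}+\iint|\widehat\pi|^2\,\dd\Lambda_1\,\dd\Lambda_2$ is meaningless in the main cases. For $\psi(r)=r^p$, the measure $\Lambda(\dd t)=c_p|t|^{-1-p}\dd t$ has infinite mass near $0$, while $|\widehat\pi|\approx1$ near the origin. So the second term is $+\infty$ and $C_{\mu,\nu}$ is of the form $\infty-\infty$; \eqref{eq:a2} does not make the marginal-only pieces finite.

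Your strict-convexity idea is the paper's, but it must be applied to differences. One way is to keep the nonnegative integrand $J_\pi(t,s):=\iint(1-\cos t(x-x'))(1-\cos s(y-y'))\,\dd\pi\,\dd\pi$. Then $\mathcal Q(\pi)=\iint J_\pi\,\dd\Lambda_1\,\dd\Lambda_2$ by Tonelli, and pointwise
\[
J_{\bar\pi}=\tfrac12J_{\pi_0}+\tfrac12J_{\pi_1}-\tfrac18\bigl(|\widehat\sigma(t,s)|^2+|\widehat\sigma(t,-s)|^2\bigr),
\]
where $\bar\pi=(\pi_0+\pi_1)/2$ and $\sigma=\pi_1-\pi_0$. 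All terms are then finite when $\mathcal Q(\pi_0),\mathcal Q(\pi_1)<\infty$. Alternatively, as the paper does, truncate $\Lambda_i$ to $\{1/n\le|t|\le n\}$ and pass to the limit using Lemma~\ref{lemma:abs integrability}.

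\paragraph{Part (ii).} This is fine once you make two points explicit. First, $\psi_i(|u-v|)\le2\psi_i(|u|)+2\psi_i(|v|)$ follows from the triangle inequality for the pseudometric $\psi_i(|u-v|)^{1/2}$. Second, $\E_{\pi_{\rm dia}}[\psi_1(|X|)\psi_2(|Y|)]\le\E_\mu[\psi_1(|X|)]\,\E_\nu[\psi_2(|Y|)]$ holds by antimonotonicity of the radii.
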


The relation with Theorem~\ref{thm:main} is indirect but simple.  For small
\(\alpha>0\), the exponential kernel
\(\exp(-\alpha\psi_1(|x-x'|)-\alpha\psi_2(|y-y'|))\) satisfies
the conditions in Assumption \ref{ass:convex-diamond}.  If
\[
  A:=\psi_1(|x-x'|)\qquad\text{and}\qquad B:=\psi_2(|y-y'|),
\]
then \(\exp(-\alpha(A+B))=1-\alpha(A+B)+\alpha^2(A+B)^2/2+O(\alpha^3)\)
uniformly on the compact support.  In the quadratic-form energy, the terms involving
only \(A\) or only \(B\) depend only on the fixed marginals but not on the coupling $\pi$.  After
cancelling those marginal terms, the order-\(\alpha^2\) part is exactly
\(\iint AB\,\dd\pi\,\dd\pi\), which is the product cost
\eqref{eq:taylor-product-cost}.

This is the same second-order mechanism used in \cite[Theorem 14]{wangzhangQOT} for the
same-exponent \(q\)-rectangular cost.  The difference is that the present
statement keeps the two one-dimensional profiles separate.

The next example records concrete profile choices covered by
Theorem~\ref{thm:taylor-products}.

\begin{example}
\label{ex:product-profiles}
In each of the following cases the type-XX cost
\(c(x,y,x',y')=\psi_1(|x-x'|)\psi_2(|y-y'|)\) has the unique minimizer
\(\pi_{\mathrm{dia}}\).
\begin{enumerate}
\item Let \(p,q\in(1,2)\), \(\mu\in\mathcal P_p(\R)\cap\Psym(\R)\), and
\(\nu\in\mathcal P_q(\R)\cap\Psym(\R)\).  Then \(\psi_1(r)=r^p\) and \(\psi_2(r)=r^q\) give
\[
  c(x,y,x',y')=|x-x'|^p|y-y'|^q .
\]

\item  Assume that
\(1<a_i\le2\), \(0<\theta_i<1\), \(\lambda_i>0\), and
\(1\leq a_i\theta_i<2\), \(i=1,2\). Let \(\mu\in\mathcal P_{a_1\theta_1}(\R)\cap\Psym(\R)\) and
\(\nu\in\mathcal P_{a_2\theta_2}(\R)\cap\Psym(\R)\).  Put
\[
  \psi_i(r)=(1+\lambda_i r^{a_i})^{\theta_i}-1,\qquad r\ge0.
\]
Then the corresponding product cost has the unique minimizer
\(\pi_{\mathrm{dia}}\). 

\item Let \(\mu,\nu\in\Pcsym(\R)\), with support diameters \(D_x,D_y\).  If
\(p,q\in(1,2)\), \(\lambda,\eta>0\),
\(\psi_1(r)=1-e^{-\lambda r^p}\), and
\(\psi_2(r)=1-e^{-\eta r^q}\), then the same conclusion holds for
\[
  c(x,y,x',y')
  =
  (1-e^{-\lambda |x-x'|^p})(1-e^{-\eta |y-y'|^q})
\]
provided \(\lambda pD_x^p<p-1\) and \(\eta qD_y^q<q-1\). Up to terms
depending only on the marginals, this is equivalent to the product
exponential cost
\(e^{-\lambda |x-x'|^p-\eta |y-y'|^q}\); the analogous boundary \(p=q=2\)
is the Gaussian product case covered directly by Theorem~\ref{thm:main}; see Example \ref{ex:weighted-kernels}(ii).

\item Let \(\mu,\nu\in\Pcsym(\R)\), with support diameters \(D_x,D_y\).  If
\(p,q\in(1,2)\), \(\lambda,\eta>0\),
\(\psi_1(r)=\log(1+\lambda r^p)\), and
\(\psi_2(r)=\log(1+\eta r^q)\), then the same conclusion holds provided
\(\lambda D_x^p<p-1\) and \(\eta D_y^q<q-1\).
\end{enumerate}
\end{example}

\subsection{Rectangular power-type costs and uniqueness of the solution}\label{33}

In our next result, we focus on a specific class of cost functions $c(x,y,x',y'):=|x-x'|^p|y-y'|^q ,~p,q\in(1,2].$ We show the optimality of the diamond transport and study the uniqueness when the marginals are symmetric and satisfy the natural moment constraints. This result improves upon  \cite[Theorem 14]{wangzhangQOT} in three ways: weaker moment assumptions, allowing for mixed exponents $p\neq q$, and a full characterization of the uniqueness. 

\begin{theorem}
\label{thm:q-rectangular}
Let \(1<p,q\le2\), 
\(\mu\in\mathcal P_p(\R)\cap \Psym(\R)\), and \(\nu\in\mathcal P_q(\R)\cap \Psym(\R)\).  Let
\[
  c_{p,q}(x,y,x',y'):=|x-x'|^p|y-y'|^q .
\]
Then the diamond transport \(\pi_{\mathrm{dia}}\) minimizes
\(\mathcal Q_{c_{p,q}}\) over \(\Pi(\mu,\nu)\), and
\(\mathcal Q_{c_{p,q}}(\pi_{\mathrm{dia}})<\infty\).
Let \(X\sim\mu\), \(Y\sim\nu\), and let
\(\gamma^-:=\piant(\law(|X|),\law(|Y|))\). The following statements hold.
\begin{enumerate}
\item If \(1<p,q<2\), then \(\pi_{\mathrm{dia}}\) is the unique minimizer.
\item If \(1<p<2=q\), then a coupling \(\pi=\law(X,Y)\) is a minimizer if and only if
\(\law(|X|,|Y|)=\gamma^-\) and \(\E_\pi[Y\mid X]=0\) \(\pi\)-a.s.
Consequently, uniqueness holds if and only if \(\gamma^-\) is Monge
from \(\law(|X|)\) to \(\law(|Y|)\) away from zero.

\item If \(p=q=2\), then a coupling \(\pi=\law(X,Y)\) is a minimizer if and only if
\(\law(|X|,|Y|)=\gamma^-\) and \(\E_\pi[XY]=0\).  Uniqueness holds if and only
if \(\gamma^-\) is Monge from \(\law(|X|)\) to \(\law(|Y|)\) away from
zero, the transpose of \(\gamma^-\) is Monge from \(\law(|Y|)\) to
\(\law(|X|)\) away from zero, and
\(\gamma^-|_{(0,\infty)^2}\) is either zero or concentrated at a single
point.
\end{enumerate}
\end{theorem}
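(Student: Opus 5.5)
Optimality and finiteness will come directly from Theorem~\ref{thm:taylor-products}, applied with $\psi_1(r)=r^p$ and $\psi_2(r)=r^q$. By Example~\ref{ex:pre} these functions belong to $\Xi$ for $p,q\in(1,2]$. The moment assumptions are exactly \eqref{eq:a2}. For $p,q<2$ both profiles have the full-support cosine representation, so part (iii) of Theorem~\ref{thm:taylor-products} gives uniqueness, which is item (i). The remaining work is the characterization of minimizers in the boundary cases, where the representation fails because $r^2$ is not of that form.

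For the boundary cases I will exploit that the cost is polynomial in the $y$-variables. Expand $|y-y'|^2=y^2+y'^2-2yy'$. Then
\[
\mathcal Q(\pi)=2\iint |x-x'|^p y^2\,\dd\pi\,\dd\pi-2\iint |x-x'|^p\,yy'\,\dd\pi\,\dd\pi .
\]
Write $m(x)=\E_\pi[Y\mid X=x]$. The second term equals $-2\iint |x-x'|^p m(x)m(x')\,\dd\mu\,\dd\mu$. Since $|x-x'|^p$ is conditionally negative definite for $p\in(0,2]$, this term is $\ge0$ on measures $m\,\dd\mu$ of total mass $\int m\,\dd\mu=\E Y=0$. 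For $p<2$ the kernel is strictly negative type, so the term vanishes iff $m=0$ $\mu$-a.s.

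The first term equals $2\int G(|x|,|y|)\,\dd\pi$, with $G(r,s)=s^2\,\E_{X'\sim\mu}|r\pm X'|^p$; symmetry of $\mu$ makes this depend only on $|x|$. I will show that $r\mapsto\E|r-X'|^p$ is strictly increasing on the support (it is convex, even, and nonconstant). This gives strict supermodularity of $G$ in $(r,s^2)$, so Theorem~\ref{thm:absolute-value-classic} shows the first term is minimized exactly when $\law(|X|,|Y|)=\gamma^-$. For $p=q=2$, expanding both squares analogously leaves the cross term $8(\E[XY])^2$. The other terms reduce to $\int G(|x|,|y|)\,\dd\pi$ with $G(r,s)=\mathrm{const}\cdot r^2s^2$ plus marginal constants, which is supermodular in $(r^2,s^2)$. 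Since $\pi_{\mathrm{dia}}$ achieves both minima simultaneously, the stated characterizations in (ii) and (iii) follow.

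The uniqueness criteria then become a lifting question: which sign assignments over $\gamma^-$ satisfy $\E[Y\mid X]=0$ (respectively $\E[XY]=0$)?

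In case (ii), suppose $\gamma^-$ is Monge away from zero, i.e.\ $|Y|=T(|X|)$ when $X\neq0$. Then, conditionally on $X$, the constraint $\E[Y\mid X]=0$ forces the sign of $Y$ to be a fair coin, so $\pi=\pi_{\mathrm{dia}}$. At $X=0$ one uses that $\law(Y\mid X=0)$ has conditional mean zero and is combined with the symmetric residual; this needs care. Conversely, if $\gamma^-$ splits some positive-mass set of $r>0$ onto two radii $s_1\ne s_2$, I will perturb the sign conditional laws by $\pm\epsilon$ so that they still average to zero but are non-uniform.

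In case (iii), only the single scalar $\E[XY]$ must vanish. Non-Monge behavior in either direction allows a perturbation as above. Even in the Monge-in-both-directions case, two distinct positive atoms $(r_1,s_1),(r_2,s_2)$ of $\gamma^-$ allow us to tilt the sign correlations with weights $\epsilon/(r_1s_1)$ and $-\epsilon/(r_2s_2)$, keeping $\E[XY]=0$. A diffuse part of $\gamma^-|_{(0,\infty)^2}$ allows the same tilt, and must also preserve the marginal symmetry.

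I expect this combinatorial bookkeeping to be the main obstacle. The key difficulty is ensuring that the perturbations keep the marginals $\mu,\nu$ intact, which requires $\pi$-sign flips that preserve symmetric marginals, e.g.\ tilting toward $(+,+),(-,-)$ versus $(+,-),(-,+)$. The zero radii also need care.
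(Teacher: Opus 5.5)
Your plan is correct in outline. For optimality, finiteness and the two boundary characterizations it is the paper's argument: the same expansion $\mathcal Q_{p,2}(\pi)=2\int\Phi_p(x)y^2\,\dd\pi-2\iint|x-x'|^p\,\dd\tau\,\dd\tau$ with $\tau=m\,\dd\mu$, strict supermodular rearrangement for the first term, Lemma~\ref{lem:strict-negative-type} for the second, the analogous expansion for $p=q=2$, and the same kind of sign tilts for non-uniqueness.

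The genuinely different step is item~(i). You get uniqueness by citing Theorem~\ref{thm:taylor-products}(iii), i.e.\ the Fourier identity $\iint\psi_1\psi_2\,\dd\sigma\,\dd\sigma=\iint|\widehat\sigma(t,s)|^2\,\Lambda_1(\dd t)\Lambda_2(\dd s)$ with the full-support L\'evy measures $c_r|t|^{-1-r}\,\dd t$. The paper's Step~II instead uses Lemmas~\ref{lem:finite-mixed} and~\ref{lem:q-rect-strict}. It writes $|x-x'|^p$ and $|y-y'|^q$ as squared Hilbert distances and uses injectivity of the tensor barycenter $\sigma\mapsto\int\varphi(x)\otimes\psi(y)\,\dd\sigma(x,y)$. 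Your route is legitimate: \eqref{eq:a2} is exactly $\mu\in\mathcal P_p$, $\nu\in\mathcal P_q$, and Example~\ref{ex:product-profiles}(i) records precisely this consequence. It is also shorter once Theorem~\ref{thm:taylor-products} is available. The paper's route buys a self-contained strictness lemma for zero-marginal signed measures under the bare moment condition $\int(1+|x|^p)(1+|y|^q)\,\dd|\sigma|<\infty$. That lemma holds for all $0<p,q<2$ and needs neither cosine representations nor truncation of $\Lambda$.

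Several points still need to be filled in.

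\emph{Strict monotonicity.} ``Convex, even, nonconstant'' does not give strict increase of $r\mapsto\E|r-X'|^p$ on $[0,\infty)$; consider $\max\{|r|-1,0\}$. Use strict convexity instead, which holds since $p>1$.

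\emph{Integrability.} Splitting $\mathcal Q_{p,2}(\pi)$ into two finite pieces and applying Lemma~\ref{lem:strict-negative-type} to $m\,\dd\mu$ needs $\int|x|^py^2\,\dd\pi<\infty$ and $\int|x|^p|m|\,\dd\mu<\infty$. This is exactly where Lemma~\ref{lem:finite-mixed} (finite energy forces the mixed moment) and $|y|\le1+y^2$ come in.

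\emph{Constant.} The $p=q=2$ cross term is $4(\E[XY])^2$, not $8(\E[XY])^2$; this is harmless.

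\emph{Missing direction in (iii).} Most substantively, in (iii) you only argue failure of uniqueness; the converse must also be proved. Suppose both Monge conditions hold and $\gamma^-|_{(0,\infty)^2}$ is concentrated at $(r_0,t_0)$. Then all mass with $|X|=r_0$ or $|Y|=t_0$ sits on that atom, so symmetry of $\mu,\nu$ makes each sign individually fair there. Since the axes contribute nothing to $XY$, the condition $\E[XY]=0$ kills the sign correlation, so the four sign pairs each get mass $1/4$. The axis parts are then fixed by marginal symmetry, exactly as for $\{X=0\}$ in (ii): the already identified part has a symmetric projection, so the residual is symmetric with prescribed absolute-value law.

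\emph{Converse in (ii).} The cleanest device is the paper's. Bias the sign of $X$ by a bounded $f(\rho,\theta)$ with $\int f(\rho,\theta)\,\kappa_\rho(\dd\theta)=0$, while keeping the sign of $Y$ an independent fair coin. Then $\E[Y\mid X]=0$ and both marginals hold automatically, and diffuse (not just two-point) kernels $\kappa_\rho$ are covered.
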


The case \(1<q<2=p\) is symmetric to part (ii) by switching the roles of $\mu$ and $\nu$.

The minimization part extends Example~\ref{ex:product-profiles}(i) from
compact marginals and the case \(p,q<2\).  The proof first handles compact
marginals and then passes to general marginals by truncation.  For
\(p,q<2\), uniqueness is proved by comparing two minimizers.  Their
difference is a signed measure with zero \(x\)- and \(y\)-marginals.  The
one-dimensional kernels \(|x-x'|^p\) and \(|y-y'|^q\) are strictly negative
type kernels; after the zero-marginal cancellations, their product gives a
nonnegative quadratic form, and it vanishes only for the zero signed measure.
Thus, two distinct minimizers cannot exist.  If one exponent is
\(2\), this strictness is lost in that coordinate.  The remaining minimizers
are then described by two simpler conditions: the absolute values must be
coupled antimonotonically, and the signs must satisfy the stated moment or
conditional-mean constraint.

The final example illustrates failure of the boundary uniqueness criterion in
a simple case.

\begin{example}
\label{ex:q2-uniform}
For \(p=q=2\) and \(\mu=\nu=\U[-1,1]\), the diamond minimizer is not unique.
Indeed, let
\(R\sim\U[0,1]\), and conditionally on \(R=r\),
choose signs \(\varepsilon,\eta=\pm1\) according to
\begin{equation}
  \label{eq:q2-sign-construction}
  \mathbb P(\varepsilon=a,\eta=b\mid R=r)
  =
  \frac{1+ab(r-1/2)}{4},\qquad a,b=\pm1.
\end{equation}
The random variables \(X=\varepsilon R\) and \(Y=\eta(1-R)\) have a minimizing law in
\(\Pi(\U[-1,1],\U[-1,1])\), but this law is not the diamond coupling.
\end{example}

\section{Diamond transport in DOT}\label{sec:dot}

 The DOT framework, introduced by
\cite{liu2023distorted}, replaces the linear expectation in the usual
Kantorovich problem by a distorted expectation.  The latter is the Choquet
integral with respect to a distorted probability capacity.  Distorted or nonadditive expectations
entered decision theory through, for example, anticipated and rank-dependent
utility \cite{quiggin1982anticipated,yaari1987dual} and Schmeidler's Choquet
integral representation and expected utility model
\cite{schmeidler1986integral,schmeidler1989subjective}. 
They are also widely used in mathematical finance and risk management \cite{acerbi2002spectral,mcneil2015quantitative} through modeling distortion and tail risk measures \cite{liu2021tailrisk}.

A distortion function is a nondecreasing function \(g:[0,1]\to[0,1]\) with
\(g(0)=0\) and \(g(1)=1\).  For a real-valued random variable \(Z\), its
distorted expectation is
\[
  \mathcal E^g[Z]
  :=
  \int_0^\infty g(\mathbb P(Z>t))\,\mathrm{d}t
  +
  \int_{-\infty}^0 ( g(\mathbb P(Z>t))-1)\,\mathrm{d}t,
\]
whenever the expression is well defined. 
If \(Z\ge0\), the second integral is
zero. 
As a convenient feature of distorted expectations, it is well known (e.g., \cite[Theorem 3]{wangWeiWillmot2020}) that the following equivalences hold:
\begin{equation}
    \label{eq:equiv-DOT}
   \mbox{$g$ is convex}
\iff \mbox{$Z\mapsto \mathcal E^g[Z]$ is concave} \iff  \mbox{$\mu \mapsto \mathcal E^g[Z_\mu]$ is convex, where $Z_\mu\sim\mu$.} 
\end{equation} 
The relationship in \eqref{eq:equiv-DOT} does not require $g$ to be nondecreasing or nonnegative, as studied in \cite{wangWeiWillmot2020}, so \eqref{eq:equiv-DOT} implies a corresponding version for concave $g$. 
For a Borel
transport cost \(c:\R^2\to \R\cup\{\infty\}\), the DOT problem is
\[
  \text{minimize}\quad
  \mathcal D_{g,c}(\pi)
  :=
  \int_0^\infty g\bigl(\pi(c>t)\bigr)\,\mathrm{d}t +  \int_{-\infty}^0( g\bigl(\pi(c>t)\bigr)-1)\,\mathrm{d}t 
  \quad\text{over }\pi\in\Pi(\mu,\nu).
\]
In other words, $  \mathcal D_{g,c}(\pi)=  \mathcal E^{g}[c(X,Y)]$, where $(X,Y)\sim \pi$.
Unlike QOT, this objective depends only on the distribution of the one-sample
cost \(c(X,Y)\).  Nevertheless, quadratic distortions produce QOT functionals,
and this is essentially the only unrestricted overlap, as shown in the following result.
In what follows, we always assume that $\mathcal D_{g,c}(\pi)$ is well defined on $\Pi(\mu,\nu)$.

\begin{proposition}
\label{prop:convex?}
For fixed marginals \(\mu,\nu\) and Borel cost \(c:\R^2\to\R\), if \(g\) is convex, then the DOT objective
 $D_{g,c}$ is convex.  Conversely,
if \(\mu\) and \(\nu\) are atomless, $g\in C^2[0,1]$, and $c(x,y)=x+y$, then  $ D_{g,c} $ is convex if and only if \(g\) is
convex.
\end{proposition}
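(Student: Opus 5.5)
The forward direction follows directly from \eqref{eq:equiv-DOT}. For $\pi\in\Pi(\mu,\nu)$, the law of $c(X,Y)$ under $\pi$ is $c_\#\pi$, and $\pi\mapsto c_\#\pi$ is affine: $c_\#(\lambda\pi_1+(1-\lambda)\pi_2)=\lambda c_\#\pi_1+(1-\lambda)c_\#\pi_2$. If $g$ is convex, \eqref{eq:equiv-DOT} says $\lambda\mapsto\mathcal E^g[Z_\lambda]$ is convex in the law. Composing a convex function of the law with this affine map shows that $\mathcal D_{g,c}$ is convex on $\Pi(\mu,\nu)$. If one wants to avoid quoting \eqref{eq:equiv-DOT} at the level of laws, the same argument can be run directly. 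For each $t$, $\pi\mapsto\pi(c>t)$ is affine, so $g(\pi(c>t))$ is convex in $\pi$. Integrating in $t$ preserves convexity, with the $-1$ term constant, provided the integrals are well defined, which is assumed.

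For the converse, take $c(x,y)=x+y$ with atomless $\mu,\nu$ and $g\in C^2$, and suppose $g''(p_0)<0$ for some $p_0\in(0,1)$. Since $g''$ is continuous, $g$ is strictly concave on an interval around $p_0$. The goal is two couplings $\pi_1,\pi_2$ whose mixture violates convexity. Convexity of $\mathcal D_{g,c}$ on $\Pi(\mu,\nu)$ would give, for $\pi_\lambda=\lambda\pi_1+(1-\lambda)\pi_2$, that
\[
\mathcal D(\pi_\lambda)-\lambda\mathcal D(\pi_1)-(1-\lambda)\mathcal D(\pi_2)=\int_{\R}\Big[g\big(\lambda a_1(t)+(1-\lambda)a_2(t)\big)-\lambda g(a_1(t))-(1-\lambda)g(a_2(t))\Big]\dd t\le 0,
\]
where $a_i(t)=\pi_i(X+Y>t)$. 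It therefore suffices to find $\pi_1,\pi_2$ such that, on a set of $t$ of positive measure, $a_1(t)\ne a_2(t)$ with both values in the concavity interval of $g$, while elsewhere $a_1=a_2$. This makes the integrand strictly positive on that set and zero elsewhere.

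To build these couplings, work on the copula level, since $\mu,\nu$ are atomless. Start from a reference coupling $\pi_0=\law(F_\mu^{-1}(U),F_\nu^{-1}(V))$, for instance the comonotone one. Modify it locally by a small rearrangement in a small square of $(u,v)$ around a point where the survival function $t\mapsto\pi_0(X+Y>t)$ crosses the level $p_0$. Concretely, pick two disjoint small $u$-intervals $I,J$ of equal length and exchange the $v$-assignments between them, or replace the comonotone block by an antimonotone block. This yields a perturbed coupling $\pi_1$ whose sum distribution differs from that of $\pi_0=\pi_2$ only for $t$ in a bounded window, and only by a small amount. Because the change is small, $a_1(t),a_2(t)$ stay near $p_0$ on that window, hence inside the strict concavity region, and differ on a set of positive measure.

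The main obstacle is making this perturbation rigorous for a general atomless $\mu,\nu$: guaranteeing that the sum law genuinely changes, and that the survival values at the affected $t$ stay within the concavity interval. Atomlessness is used exactly here. It makes $t\mapsto\pi_0(X+Y>t)$ continuous, so it attains the value $p_0$, and it allows quantile blocks of any prescribed small mass. I would first try the explicit block swap: local comonotone versus local antimonotone pairing on a tiny quantile block straddling the $p_0$-quantile of $X+Y$ under $\pi_0$. For this swap I would check that the two survival functions differ on an interval, using that a comonotone and an antimonotone block of nondegenerate marginals produce different sum laws.
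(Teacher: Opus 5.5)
Your proposal is correct and follows essentially the paper's route. The forward direction uses affinity of $\pi\mapsto\pi(c>t)$ (equivalently \eqref{eq:equiv-DOT}), and the converse localizes a perturbation at one quantile level so that the survival function of $X+Y$ changes only where its values are close to the target level. The differences are cosmetic: you argue by contradiction with the chord inequality on an interval where $g''<0$, and you perturb a comonotone coupling by a single antimonotone block. That block's sums are sandwiched between those of the untouched mass below and above, and this sandwiching, not the mere smallness of the change, is what keeps both survival functions near $p_0$. The paper instead differentiates twice along an affine path, averages $g''$ over $[x-\varepsilon,x+\varepsilon]$, and swaps diagonal and off-diagonal antimonotone pairings of four sorted intervals.
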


\begin{proof}
If \(g\) is convex, then the distorted expectation is convex in the distribution, as explained in \eqref{eq:equiv-DOT}.  Since
\(\pi\mapsto\law_\pi(c(X,Y))\) is affine in \(\pi\), the map
\(\pi\mapsto\mathcal D_{g,c}(\pi)\) is convex on every \(\Pi(\mu,\nu)\).

We now prove the converse. Fix
\(x\in(0,1)\), and choose \(0<\varepsilon<\min\{x,1-x\}\). Put $q=x-\varepsilon$. 
Since $\mu,\nu$ are atomless, there exist sorted intervals $ L_\mu<A_0<A_1<H_\mu~\text{and}
  ~
  L_\nu<B_0<B_1<H_\nu$, both of which
 partition $\R$, with $\mu(H_\mu)=\nu(H_\nu)=q~\text{and}
  ~
  \mu(A_0)=\mu(A_1)=\nu(B_0)=\nu(B_1)=\varepsilon$.

Let $\pi\in\Pi(\mu,\nu)$ be such that \[
  \pi(H_\mu\times H_\nu)=q\qquad\text{and}
  \qquad
  \pi(L_\mu\times L_\nu)=1-q-2\varepsilon.
\] For $i,j\in\{0,1\}$, let \(\gamma_{ij}\) be the antimonotone couplings between
\(A_i\) and \(B_j\), each of total mass \(\varepsilon\). For
\(0\le\alpha\le1\), define
\[
  \pi_\alpha
  :=\pi|_{H_\mu\times H_\nu}+\pi|_{L_\mu\times L_\nu}+
  \alpha\gamma_{00}
  +(1-\alpha)\gamma_{01}
  +(1-\alpha)\gamma_{10}
  +\alpha\gamma_{11}.
\]
It follows that \(\pi_\alpha\in\Pi(\mu,\nu)\), and \(\alpha\mapsto\pi_\alpha\) is affine.

Set $F_\varepsilon(\alpha):=D_{g,c}(\pi_\alpha)$ and $T_{\varepsilon,\alpha}(t):=\pi_\alpha(X+Y>t)$. Since \(D_{g,c}\) is convex and \(\pi_\alpha\) is affine,
\(F_\varepsilon\) is convex. Moreover, \(T_{\varepsilon,\alpha}(t)\) is affine
in \(\alpha\). Therefore, for \(0<\alpha<1\),
\[
  F_\varepsilon''(\alpha)
  =
  \int_{\mathbb R}
  g''(T_{\varepsilon,\alpha}(t))
  \bigl(\partial_\alpha T_{\varepsilon,\alpha}(t)\bigr)^2\,\dd t
  \ge0.
\]

By our construction, 
\begin{align}
\partial_\alpha T_{\varepsilon,\alpha}(t)\neq 0\implies    x-\varepsilon
  =
  q
  \le
  T_{\varepsilon,\alpha}(t)
  \le
  q+2\varepsilon
  =
  x+\varepsilon.\label{eq:x+-ee}
\end{align}
Also $\int_{\R}
  \bigl(\partial_\alpha T_{\varepsilon,\alpha}(t)\bigr)^2\,\dd t
  >0,$ 
because $\pi_0$ and $\pi_1$ have
different \((X+Y)\)-tail functions on a set of positive Lebesgue measure due to the antimonotonicity. We obtain
\[
  \frac{
  \int_{\R}
  g''(T_{\varepsilon,\alpha}(t))
  \bigl(\partial_\alpha T_{\varepsilon,\alpha}(t)\bigr)^2\,\dd t
  }{
  \int_{\R}
  \bigl(\partial_\alpha T_{\varepsilon,\alpha}(t)\bigr)^2\,\dd t
  }\geq 0.
\]
The left-hand side is a weighted average of \(g''\) over
\([x-\varepsilon,x+\varepsilon]\) by \eqref{eq:x+-ee}. Thus, letting \(\varepsilon\downarrow0\) gives \(g''(x)\ge0\). Since
\(x\in(0,1)\) was arbitrary, \(g\) is convex on \([0,1]\).
\end{proof}

The cost $c(x,y)=x+y$ in the equivalence statement of Proposition \ref{prop:convex?} is chosen for simplicity, but it cannot be replaced by an arbitrary cost. For instance, if $c(x,y)=x$, then $D_{g,c}$ is constant on $\Pi(\mu,\nu)$ for every $g$ because $\mathcal E^g[c(X,Y)]$ only depends on $X$. The next example illustrates that the atomless assumption in that statement is also needed.
\begin{example}  Consider the nondegenerate marginals 
\[
  \mu=\nu=\frac{1}{2}\delta_0+\frac{1}{2}\delta_1, 
\]
and let $c(x,y)=x+y$.
   Every \(\pi\in\Pi(\mu,\nu)\) is determined
by \(\lambda=\pi(\{(1,1)\})\in[0,1/2]\), and then
\[
  \pi(X+Y=0)=\lambda,\qquad
  \pi(X+Y=1)=1-2\lambda,\qquad
  \pi(X+Y=2)=\lambda .
\]
Hence,
 $
  \mathcal D_{g,c}(\pi)=g(1-\lambda)+g(\lambda).
 $
 For any nonconvex $g$ with
 $
  g(1-\lambda)+g(\lambda)=1$ for  $\lambda\in[0,1],
 $ the map \(\pi\mapsto\mathcal D_{g,c}(\pi)\) is constant, hence convex,
on \(\Pi(\mu,\nu)\).
\end{example}

The  observation  in Proposition \ref{prop:convex?} separates DOT sharply from the other OT formulations.  In classical OT, the objective is affine in \(\pi\), so convexity is
automatic but uniqueness is limited by exposed faces of the transport polytope;
this is why the diamond transport cannot be isolated from its diamond-type
companions by a linear objective.  In DOT, convexity is controlled entirely by
the  distortion function \(g\), independently of any positive-definite
structure of the cost.  In QOT, by contrast, the convexity of
\(\pi\mapsto\mathcal Q_c(\pi)\) is a kernel property: it requires positive
definiteness of \(((x,y),(x',y'))\mapsto c(x,y,x',y')\) \cite[Proposition 1]{wangzhangQOT}.  This condition is much
more delicate than convexity of a scalar distortion and is one reason QOT can
be non-convex even for natural-looking costs.

In many  applications of distorted expectations, such as those in risk management \cite{mcneil2015quantitative}, 
the distortion function $g$ is often concave instead of convex, corresponding to the convexity of the functional $Z\mapsto \mathcal E^{g}[Z]$; see \eqref{eq:equiv-DOT} with sign flipped.  
Proposition \ref{prop:convex?} applies in such situations, by replacing convexity with concavity in all statements. 

The next result shows that DOT and QOT are two genuinely different frameworks, but they have  some common objects,  governed by a one-parameter family of distortion functions.

\begin{theorem}\label{thm:dot-qot-overlap}
Let \(g\) be a distortion function,   \(c:\R^2\to\R\) be
measurable and nonconstant, and  
\(h: \R^ 4\to\R\) be  measurable.   
The following statements are equivalent.
\begin{enumerate}
\item For every pair of marginals \(\mu,\nu\), whenever well defined,
\[
  \mathcal Q_h(\pi)=\mathcal D_{g,c}(\pi),
  \qquad \pi\in\Pi(\mu,\nu).
\]
\item There exists \(\alpha\in[-1,1]\) such that
\begin{align}
    g(u)=\alpha u^2+(1-\alpha)u,\qquad u\in [0,1],\label{eq:gu}
\end{align}
and, for \(z=(x,y)\) and \(z'=(x',y')\),
\[
  \frac{h(z,z')+h(z',z)}{2}
  =
  \alpha\min\{c(z),c(z')\}
  +\frac{1-\alpha}{2} ( c(z)+c(z')).
\]
\end{enumerate}
\end{theorem}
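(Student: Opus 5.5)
The direction (ii)$\Rightarrow$(i) is a direct computation, and I will do it first. Let $(X,Y),(X',Y')$ be independent with law $\pi$, and put $Z=c(X,Y)$, $Z'=c(X',Y')$. Since $Z,Z'$ are i.i.d., $\mathbb P(\min\{Z,Z'\}>t)=\pi(c>t)^2$. The layer-cake formula (split over $t\ge0$ and $t<0$) then gives
\[
  \E[\min\{Z,Z'\}]=\int_0^\infty \pi(c>t)^2\,\dd t+\int_{-\infty}^0(\pi(c>t)^2-1)\,\dd t ,
\]
and similarly $\E[Z]$ equals the same expression with the square removed. Because $\pi\otimes\pi$ is exchangeable, $\mathcal Q_h(\pi)=\mathcal Q_{\tilde h}(\pi)$ with $\tilde h=(h+h^\top)/2$. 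Substituting the formula for $\tilde h$ gives $\alpha\E[\min\{Z,Z'\}]+(1-\alpha)\E[Z]$. This is exactly $\int g(\pi(c>t))\,\dd t$ in the DOT sense for $g(u)=\alpha u^2+(1-\alpha)u$, since $g(1)=1$ makes the negative part match. Finally, $\alpha\in[-1,1]$ is precisely the condition for this $g$ to be nondecreasing on $[0,1]$ with $g(0)=0$ and $g(1)=1$, because $g'(0)=1-\alpha\ge0$ and $g'(1)=1+\alpha\ge0$.

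For (i)$\Rightarrow$(ii) I will use Dirac and finitely supported couplings. First take $\mu=\delta_x$ and $\nu=\delta_y$. Then $\Pi$ is the single point $\delta_{(x,y)}$, and (i) gives $h(z,z)=\mathcal E^g[c(z)]=c(z)$. Next take two points $z_1=(x_1,y_1)$ and $z_2=(x_2,y_2)$ with $x_1\ne x_2$ and $y_1\ne y_2$. Let $\mu,\nu$ be the two-point marginals with weights $(w,1-w)$, and consider the coupling $\pi=w\delta_{z_1}+(1-w)\delta_{z_2}$ as $w$ ranges over $(0,1)$. If $c(z_1)\ge c(z_2)$, the DOT value is $g(w)(c(z_1)-c(z_2))+c(z_2)$. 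The QOT value is $w^2c(z_1)+(1-w)^2c(z_2)+2w(1-w)\tilde h(z_1,z_2)$.

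Choose a pair with $c(z_1)>c(z_2)$ and compare the two sides. Then $g$ must be a polynomial of degree at most $2$ in $w$ with $g(0)=0$ and $g(1)=1$, hence of the form \eqref{eq:gu}, and the monotonicity of $g$ forces $\alpha\in[-1,1]$. Matching the $w(1-w)$ coefficients then identifies $\tilde h(z_1,z_2)$ as in (ii). Once $\alpha$ is known, the same identity yields the formula for every pair $z_1,z_2$, including pairs with $c(z_1)=c(z_2)$, where both sides reduce to $c(z_1)$.

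The main obstacle is the choice of test couplings. Two points sharing a coordinate cannot be placed in a coupling whose $\pi$ is exactly $w\delta_{z_1}+(1-w)\delta_{z_2}$ with the intended marginals. For example, if $x_1=x_2$, the first marginal is $\delta_{x_1}$ and the coupling is still the product, which is fine; but the general case needs care. For pairs with $x_1=x_2$ or $y_1=y_2$ I will note that $\mu$ or $\nu$ is then a Dirac mass, so $\Pi(\mu,\nu)$ is again a singleton equal to this measure, and the same computation applies. A separate point is where nonconstancy of $c$ enters: it guarantees some pair with $c(z_1)\ne c(z_2)$, which is needed to pin down $g$. If the two points with distinct costs share a coordinate, the singleton argument above still applies, so $g$ is determined in either case. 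I also have to keep track of well-definedness; with finite supports every integral is finite, so (i) applies to all of these test cases.
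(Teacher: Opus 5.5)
Your proposal is correct and follows essentially the paper's proof. The direction (ii)$\Rightarrow$(i) uses the same layer-cake identity for $\E[\min\{Z,Z'\}]$, and for (i)$\Rightarrow$(ii) you use the same two-point family $(1-\lambda)\delta_{z_0}+\lambda\delta_{z_1}$ to force $g$ to be quadratic; the only cosmetic difference is that you read off the symmetric part of $h$ from the $w(1-w)$ coefficient, whereas the paper tests $h-h^*$ against $\delta_z$ and $(\delta_z+\delta_{z'})/2$, and your worry about shared coordinates is vacuous because any probability measure on $\R^2$ is a coupling of its own marginals, which is all that (i) requires.
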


\begin{proof}
First, suppose that (ii) holds.  Fix \(\mu,\nu\) and
\(\pi\in\Pi(\mu,\nu)\).  Let \(W=(X,Y)\sim\pi\), and let \(W'\) be an
independent copy of \(W\).  Put \(Z=c(W)\) and \(Z'=c(W')\).  Note that 
\begin{align}
    \E[\min\{Z,Z'\}]
  =
  \int_0^\infty \pi(c>t)^2\,\dd t
  +
  \int_{-\infty}^0 (\pi(c>t)^2-1)\,\dd t ,\label{eq:emin}
\end{align}
because \(\mathbb P(\min\{Z,Z'\}>t)=\pi(c>t)^2\).
Let \(h^*\)  be given by $h^*(z,z')=(h(z,z')+h(z',z))/2$.  By \eqref{eq:gu} and \eqref{eq:emin},
\[
\begin{aligned}
  \mathcal D_{g,c}(\pi)
  &=
  \alpha \E[\min\{c(W),c(W')\}]
  +(1-\alpha)\E[c(W)]\\
  &=
  \iint
  \left(
  \alpha\min\{c(z),c(z')\}
  +\frac{1-\alpha}{2}(c(z)+c(z'))
  \right)
  \pi(\dd z)\pi(\dd z')
  =
  \mathcal Q_{h^*}(\pi).
\end{aligned}
\]
Since \(\pi\otimes\pi\) is symmetric and the symmetric part of \(h\) is \({h^*}\),
\(\mathcal Q_h(\pi)=\mathcal Q_{h^*}(\pi)\).  This proves (i).

Conversely, assume (i).  Choose \(z_0,z_1\in\R^2\) such that
\(a=c(z_0)<c(z_1)=b\), and put
\[
  \pi_\lambda=(1-\lambda)\delta_{z_0}+\lambda\delta_{z_1},
  \qquad 0\le\lambda\le1.
\]
If \(z_i=(x_i,y_i)\), then
\(\pi_\lambda\in\Pi((1-\lambda)\delta_{x_0}+\lambda\delta_{x_1},
(1-\lambda)\delta_{y_0}+\lambda\delta_{y_1})\).  A direct computation
gives
\[
  \mathcal D_{g,c}(\pi_\lambda)=a+(b-a)g(\lambda).
\]
On the other hand,
\[
  \mathcal Q_h(\pi_\lambda)
  =
  (1-\lambda)^2h(z_0,z_0)
  +\lambda(1-\lambda)( h(z_0,z_1)+h(z_1,z_0))
  +\lambda^2h(z_1,z_1),
\]
which is a quadratic polynomial in \(\lambda\).  Thus \(g\) is a quadratic
polynomial on \([0,1]\).  Since \(g(0)=0\) and \(g(1)=1\), it has the form $g(u)=\alpha u^2+(1-\alpha)u$. 
The monotonicity of the distortion function gives \(\alpha\in[-1,1]\). This proves \eqref{eq:gu}. 

Let \(h^*\) be as above.  The first
part of the proof shows that \(\mathcal Q_{h^*}(\pi)=\mathcal D_{g,c}(\pi)\) for
every coupling \(\pi\).  Hence
\[
  \iint (h(z,z')-{h^*}(z,z'))\,\pi(\dd z)\pi(\dd z')=0
  \quad\text{for every coupling }\pi.
\]
Taking \(\pi=\delta_z\) gives \(h(z,z)={h^*}(z,z)\).  Taking
\(\pi=(\delta_z+\delta_{z'})/2\) and using the symmetry of \({h^*}\) gives
\[
  h(z,z')+h(z',z)=2{h^*}(z,z').
\]
Thus the symmetric part of \(h\) is \({h^*}\), proving (ii).
\end{proof}

Theorem \ref{thm:dot-qot-overlap} illustrates that DOT and QOT coincide precisely when $g$ is a quadratic function.  When $\alpha\ge 0$, such $g$ is convex; when $\alpha\le 0$, such $g$ is concave. Thus, in combination with Proposition \ref{prop:convex?}, this result also highlights simple cases of QOT objectives that are either convex or concave.

We next return to the diamond structure.  The following result is a distorted
counterpart of Theorem~\ref{thm:absolute-value-classic}.  It uses the standard
fact that, for a concave distortion \(g\), \(\mathcal E^g\) is increasing with
respect to increasing convex order; see \cite[Lemma~2]{liu2023distorted}.
Indeed, this gives yet another  condition equivalent to those in \eqref{eq:equiv-DOT}.

\begin{theorem}\label{thm:dot-diamond-type}
Let \(\mu,\nu\in\Pcsym(\R)\).  Let
\(f:[0,\infty)^2\to[0,\infty)\) be  nondecreasing 
and supermodular.  If \(g\) is a concave distortion function, then every
diamond-type transport minimizes
\[
  \mathcal D_{g,c_f}(\pi),
  \qquad
  c_f(x,y):=f(|x|,|y|),
\]
over \(\Pi(\mu,\nu)\).
If further, $g$ is strictly concave, and either $f(r,s)=r+s$ or $f$ is strictly supermodular, then every minimizer of $\mathcal D_{g,c_f}(\pi)$ is diamond-type. 
\end{theorem}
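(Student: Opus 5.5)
The approach is to reduce to the absolute-value couplings, as in Theorem~\ref{thm:absolute-value-classic}. For \(\pi\in\Pi(\mu,\nu)\) let \(\gamma_\pi=(|x|,|y|)_\#\pi\in\Pi(\mu_{\mathrm{abs}},\nu_{\mathrm{abs}})\). Then \(c_f(X,Y)=f(|X|,|Y|)\) has the law of \(f(R,S)\) with \((R,S)\sim\gamma_\pi\), so \(\mathcal D_{g,c_f}(\pi)=\mathcal E^g[f(R,S)]\). By Theorem~\ref{thm:absolute-value-classic} (equivalence (ii)\(\iff\)(iii), which only needs symmetry), \(\pi\) is diamond-type if and only if \(\gamma_\pi=\gamma_{\rm ant}\). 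Every diamond-type \(\pi\) therefore gives the same value \(\mathcal E^g[f(\bar R,\bar S)]\), with \((\bar R,\bar S)\sim\gamma_{\rm ant}\).

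\emph{Optimality.} The key fact is the classical ordering: for \(f\) supermodular, \(f(\bar R,\bar S)\le_{\rm cx} f(R,S)\) for every \((R,S)\) with these marginals. Both sides have the same mean, since the expectation of a supermodular function is minimized by the antimonotone coupling; this uses bounded \(f\) by compactness, continuity not assumed, via \cite{puccetti2015extremal} or the Lorentz/Tchen inequality. To get the convex order I would check stop-loss transforms. For convex nondecreasing \(\phi\) and nondecreasing supermodular \(f\), the composition \(\phi\circ f\) is again supermodular. The lattice argument is that for \(r_1\le r_2,s_1\le s_2\), the values \(a=f(r_1,s_1)\le b,c\le d=f(r_2,s_2)\) satisfy \(b+c\le a+d\), and convex nondecreasing \(\phi\) preserves \(\phi(b)+\phi(c)\le\phi(a)+\phi(d)\) by majorization. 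Hence \(\E\phi(f(\bar R,\bar S))\le \E\phi(f(R,S))\), i.e.\ increasing convex order. Since concave \(g\) makes \(\mathcal E^g\) increasing in increasing convex order \cite[Lemma~2]{liu2023distorted}, we get \(\mathcal E^g[f(\bar R,\bar S)]\le \mathcal E^g[f(R,S)]\). This shows that every diamond-type transport is a minimizer.

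\emph{Converse under strictness.} Let \(\pi\) be a minimizer. If \(f\) is strictly supermodular, I want \(\gamma_\pi=\gamma_{\rm ant}\). Suppose not. Then the means already differ: \(\E f(R,S)>\E f(\bar R,\bar S)\) by uniqueness in the rearrangement inequality of Theorem~\ref{thm:absolute-value-classic}, at least for lower semicontinuous \(f\). I would handle measurability via the four-point strict inequality on a non-antimonotone pair in the support. In that case \(f(\bar R,\bar S)\le_{\rm icx}f(R,S)\) with strictly larger mean. Strictly concave \(g\) need not give strict inequality in general, but it does here: \(\mathcal E^g[Z]=\int_0^1 F_Z^{-1}(1-u)\,\dd g(u)\) with \(g'\) strictly decreasing, hence positive on \([0,1)\). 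The difference \(\int_0^1 (F_Z^{-1}-F_{\bar Z}^{-1})(1-u)\,g'(u)\,\dd u\) can be rewritten by Abel summation as \(\int \bigl(\int_0^u (\cdots)\bigr)(-\dd g'(u))\) plus the boundary term \(g'(1^-)\,(\E Z-\E\bar Z)\). Both pieces are nonnegative. The first is strictly positive unless the tail integrals coincide on a \(\dd g'\)-full set, which, since \(-\dd g'\) has full support, forces the tail integrals to coincide everywhere, hence equal means. So minimality forces \(\gamma_\pi=\gamma_{\rm ant}\).

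For \(f(r,s)=r+s\), which is modular and not strictly supermodular, the means are always equal. The argument then runs in convex order: \(\bar R+\bar S\le_{\rm cx}R+S\), with equality in law only for the antimonotone coupling. Equality in law forces antimonotonicity because the sum of an antimonotone pair is the unique minimal element in convex order; I would prove this via \(\E(R+S-t)_+\) equality for all \(t\). Strict concavity of \(g\) then makes \(\mathcal E^g\) strictly increasing in convex order, by the same Abel-summation representation, so again \(\gamma_\pi=\gamma_{\rm ant}\).

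I expect the main obstacle to be this strictness step: showing that strict concavity of \(g\) yields strict monotonicity of \(\mathcal E^g\) along a strict (increasing) convex-order relation, and identifying when equality in convex order forces \(\gamma_\pi=\gamma_{\rm ant}\).
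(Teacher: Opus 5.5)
Your proposal is correct and follows the paper's proof: you pass to the absolute-value coupling, note that $\varphi\circ f$ is supermodular for nondecreasing convex $\varphi$ so the rearrangement inequality gives $f(\bar R,\bar S)\le_{\rm icx} f(R,S)$, and then use that $\mathcal E^g$ is monotone in increasing convex order for concave $g$. Your Abel-summation argument and the equality-in-law step for $f(r,s)=r+s$ fill in strictness details that the paper leaves to \cite{lauzier2026risk} and to Theorem~\ref{thm:absolute-value-classic}.

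One slip should be deleted: your opening claim that $f(\bar R,\bar S)\le_{\rm cx} f(R,S)$ with equal means is false for general supermodular $f$. For example, with $f(r,s)=rs$ the mean depends on the coupling. Nothing later relies on this claim, since what you actually prove and use is the increasing convex order.
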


\begin{proof}
Let \(\pi\in\Pi(\mu,\nu)\), and let \(\pi_\diamond\) be diamond-type.  Recall that \(\mu_{\mathrm{abs}}=|\cdot|_\#\mu\) and \(\nu_{\mathrm{abs}}=|\cdot|_\#\nu\). Write
\((R,S)\) for the absolute values under \(\pi\) and
\((R^-,S^-)\sim\gamma_{\rm ant}\) for the antimonotone coupling of
\(\mu_{\mathrm{abs}}\) and \(\nu_{\mathrm{abs}}\).  Since \(f\) is coordinatewise nondecreasing and
supermodular, \(\varphi\circ f\) is supermodular for every nondecreasing convex
\(\varphi\).  Indeed, this is the same four-point argument used for monotone
supermodular costs in distorted transport: the supermodular increment of
\(f\), together with coordinatewise monotonicity, lets convexity of
\(\varphi\) preserve the four-point inequality.

The rearrangement inequality therefore gives
\[
  \E[\varphi(f(R^-,S^-))]
  \le
  \E[\varphi(f(R,S))]
\]
for every nondecreasing convex \(\varphi\).  Hence
\begin{align}
     f(R^-,S^-)\le_{\rm icx} f(R,S).\label{eq:frs}
\end{align}
All diamond-type transports have the same absolute-value coupling
\(\gamma_{\rm ant}\), so the left-hand side of \eqref{eq:frs} is the cost distribution under
\(\pi_\diamond\).  Since \(g\) is concave, \(\mathcal E^g\) is increasing in
increasing convex order \cite[Lemma~2]{liu2023distorted}, and thus
\[
  \mathcal D_{g,c_f}(\pi_\diamond)
  \le
  \mathcal D_{g,c_f}(\pi).
\]
This proves the optimality of diamond-type transports.
For the last statement,
the case that $g$ is strictly concave and $f$ is strictly supermodular follows from  similar arguments to the proof of Theorem \ref{thm:absolute-value-classic}. 
The case that $g$ is strictly concave and $f(r,s)=r+s$ follows from the fact that  $\mathcal E^{g}$ is strictly increasing in increasing convex order for such $g$ (see \cite[Lemma 2]{lauzier2026risk}), so a minimizer $\pi$ must satisfy \((|x|,|y|)_\#\pi=\gamma_{\rm ant}\).  
\end{proof}

Concavity of $g$ corresponds to convexity of $\mathcal E^g$ as in \eqref{eq:equiv-DOT}, which is a common object in risk management.  
Indeed, $\mathcal E^{g}$ forms a large class of coherent risk measures in the sense of \cite{artzner1999coherent},  called spectral risk measures \cite{acerbi2002spectral}. 
This class includes the most important coherent risk measure, Expected Shortfall (ES, also called CVaR),
 widely used in financial regulation \cite{mcneil2015quantitative, wang2021axiomatic}  and optimization \cite{rockafellar2000optimization,zhu2009worst,chen2011tight}. 
 Therefore, Theorem \ref{thm:dot-diamond-type} gives the solutions to a class of risk minimization problems with given marginals  \cite[Chapter 8]{mcneil2015quantitative}, 
  $$
\mathcal E^{g}[f(|X|,|Y|)],
  $$
  where the bivariate risks are aggregated in the form of $f(|x|,|y|)$ with $f$ nondecreasing and supermodular, and evaluated by a spectral risk measure $\mathcal E^g$, including   ES as a special case.

The previous theorem gives a large family of DOT problems for which all
diamond-type transports are optimal.  The full diamond transport can also be
the unique DOT optimizer.  This is shown in the next example, where the
cost is finite only on the diamond boundary, and a monotone transform calibrates
the first variation of the distorted objective.

\begin{example}\label{thm:dot-unique-diamond}
We provide a DOT problem where the diamond transport is the unique minimizer. 
Let \(\mu=\nu=\U[-1,1]\), let \(p>1\), and set \(g_p(u)=u^p\).  On the diamond
boundary \(|x|+|y|=1\), define
\[
  B(x,y)
  =
  |x|
  +\mathbf 1_{\{x>0,y<0\}}
  +2\mathbf 1_{\{x<0,y>0\}}
  +3\mathbf 1_{\{x<0,y<0\}}.
\]
For \(0\le q<4\), put
\[
  \phi_p(q)
  =
  \int_0^q \left(1-\frac u4\right)^{1-p}\,\mathrm{d}u,
\]
with \(\phi_p(4)\) defined by the finite limit if \(1<p<2\), and by
\(+\infty\) if \(p\ge2\).  Define 
\[
  c_p(x,y)
  =
  \begin{cases}
  \phi_p(B(x,y)), & |x|+|y|=1,\\
  +\infty, & |x|+|y|\ne1.
  \end{cases}
\]
Then the diamond transport \(\pi_{\rm dia}\) is the unique minimizer of
\[
  \mathcal D_{g_p,c_p}(\pi)
  =
  \int_0^\infty \pi(c_p>t)^p\,\mathrm{d}t
\]
over \(\Pi(\U[-1,1],\U[-1,1])\).
\end{example}

\begin{proof}
Under \(\pi_{\rm dia}\), write
\[
  X=\varepsilon R,\qquad Y=\eta(1-R),
\]
where \(R\sim\U[0,1]\) and \(\varepsilon,\eta\) are independent Rademacher
signs.  The four sign pairs have probability \(1/4\), and \(R\) is uniform on
each side of the diamond.  Hence \(B(X,Y)\sim\U[0,4]\).  If
\[
  S_0(t):=\pi_{\rm dia}(c_p>t),
\]
then
\[
  S_0(\phi_p(q))=1-\frac q4,\qquad 0\le q<4.
\]
Therefore
\[
  \mathcal D_{g_p,c_p}(\pi_{\rm dia})
  =
  \int_0^4 \left(1-\frac q4\right)^p\phi_p'(q)\,\mathrm{d}q
  =
  \int_0^4 \left(1-\frac q4\right)\,\mathrm{d}q
  =
  2.
\]

Let \(\pi\in\Pi(\U[-1,1],\U[-1,1])\).  If
\(\pi(|X|+|Y|\ne1)>0\), then \(\mathcal D_{g_p,c_p}(\pi)=+\infty\), so any
minimizer must be supported on the diamond boundary.  For such a coupling,
\[
  X=\varepsilon R,\qquad Y=\eta(1-R),\qquad R\sim\U[0,1].
\]
The uniform marginals force
\[
  \mathbb P(\varepsilon=1\mid R=r)
  =
  \mathbb P(\eta=1\mid R=r)
  =
  1/2
  \quad\text{for a.e. }r.
\]
Thus, for some measurable \(k:[0,1]\to[-1,1]\),
\[
  \mathbb P(\varepsilon=a,\eta=b\mid R=r)
  =
  \frac{1+ab\,k(r)}4,\qquad a,b=\pm1.
\]
The diamond transport corresponds to \(k=0\) a.e.

Let \(S_\pi(t):=\pi(c_p>t)\).  Strict convexity of \(u\mapsto u^p\) gives
\[
  S_\pi(t)^p-S_0(t)^p
  \ge
  pS_0(t)^{p-1}(S_\pi(t)-S_0(t)),
\]
with equality only when \(S_\pi(t)=S_0(t)\).  Put
\[
  H(t):=\int_0^t pS_0(s)^{p-1}\,\mathrm{d}s.
\]
By the definition of \(\phi_p\),
\[
  H(\phi_p(q))
  =
  \int_0^q p\left(1-\frac u4\right)^{p-1}\phi_p'(u)\,\mathrm{d}u
  =
  pq.
\]
Hence \(H(c_p)=pB\) on the diamond boundary.  It follows that
\[
\begin{aligned}
  \mathcal D_{g_p,c_p}(\pi)-\mathcal D_{g_p,c_p}(\pi_{\rm dia})
  &\ge
  \int_0^\infty pS_0(t)^{p-1}(S_\pi(t)-S_0(t))\,\mathrm{d}t  \\
  &=
  \int H(c_p)\,\mathrm{d}(\pi-\pi_{\rm dia})
  =
  p\int B\,\mathrm{d}(\pi-\pi_{\rm dia}).
\end{aligned}
\]
Conditionally on \(R=r\), a direct computation yields
\[
 \E_\pi[B\mid R=r]=r+ \frac{0(1+k(r))+1(1-k(r))+2(1-k(r))+3(1+k(r))}{4}
  =
 r+ \frac32,
\]
which is the same conditional mean as under
\(\pi_{\rm dia}\).  Therefore, \(\int B\,\mathrm{d}\pi=\int B\,\mathrm{d}\pi_{\rm dia}\), and
\(\pi_{\rm dia}\) is optimal.

If equality holds, strict convexity forces \(S_\pi(t)=S_0(t)\) for almost
every \(t\).  Since both survival functions are monotone, \(c_p\) has the same
law under \(\pi\) and \(\pi_{\rm dia}\).  As \(\phi_p\) is strictly increasing,
\(B\) has the same law under the two couplings.  On the interval
\(B\in(0,1)\), only the side \(x>0,y>0\) contributes, and \(B=r\).  Hence, for
every Borel set \(A\subseteq(0,1)\),
\[
  \pi(B\in A)=\frac14\int_A(1+k(r))\,\mathrm{d}r,
  \qquad
  \pi_{\rm dia}(B\in A)=\frac{|A|}{4}.
\]
Equality of the \(B\)-laws gives \(\int_A k(r)\,\mathrm{d}r=0\) for every
Borel \(A\subseteq(0,1)\), so \(k=0\) a.e.  Therefore
\(\pi=\pi_{\rm dia}\), proving uniqueness.
\end{proof}

\section{A discussion on QAP}
\label{sec:QAP}
In this section, we briefly discuss the discrete analogue of QOT, which is the quadratic assignment problem (QAP).
We   present an interesting question here and describe a conjecture on its solutions, but we provide no results.

As we have seen in Section \ref{sec:main-results}, the diamond  transport minimizes several classes of problems in QOT, but such a form seems not to have been
studied in the QAP literature.  
 The QAP counterpart with cost
\(c\) is the minimization over permutations \(\sigma\) of \(\{1,2,\ldots,2m\}\) of
\begin{equation}
\label{eq:qap-objective}
  Q_{m,c}(\sigma):=\sum_{i=1}^{2m}\sum_{j=1}^{2m}
c(i,{\sigma(i)},j,{\sigma(j)}).
\end{equation}
In particular, this is a discrete matching problem between $\{1,2,\ldots,2m\}$ and itself.

The difficulty is that the continuous diamond optimizer is usually not
Monge, so a finite permutation must encode the diamond through a
combinatorial sign pattern.

Define the two
finite diamond permutations by
\begin{equation}
\label{eq:finite-diamond-permutation}
  \sigma_\varepsilon(i)
  :=
  m+1/2
  +\varepsilon(-1)^i
  (m-|i-m-1/2|),
  \qquad
  i=1,\ldots,2m,\quad \varepsilon\in\{-1,1\}.
\end{equation}
Equivalently, the points \((i,\sigma_\varepsilon(i))\) lie on the centered
discrete diamond:
\[
  |i-(m+1/2)|
  +
  |\sigma_\varepsilon(i)-(m+1/2)|
  =m.
\]
The two choices of \(\varepsilon\) are reflections of each other across the
horizontal center line \(\sigma_\varepsilon(i)=m+1/2\). See Figure \ref{fig:finite-diamond-permutation}.

\begin{figure}[t]
\centering
\setlength{\unitlength}{0.92mm}
\begin{picture}(104,96)
\thinlines
\put(16,16){\framebox(64,64){}}
\put(16,48){\line(1,0){64}}
\put(48,16){\line(0,1){64}}
\put(48,80){\line(1,-1){32}}
\put(80,48){\line(-1,-1){32}}
\put(48,16){\line(-1,1){32}}
\put(16,48){\line(1,1){32}}
\put(48,86){\makebox(0,0){\(16\)}}
\put(48,10){\makebox(0,0){\(1\)}}
\put(10,48){\makebox(0,0){\(1\)}}
\put(86,48){\makebox(0,0){\(16\)}}
\put(94,48){\makebox(0,0)[l]{\(i\)}}
\put(48,92){\makebox(0,0){\(\sigma_\varepsilon(i)\)}}
\thicklines
\put(18,46){\circle*{2.3}}
\put(22,54){\circle*{2.3}}
\put(26,38){\circle*{2.3}}
\put(30,62){\circle*{2.3}}
\put(34,30){\circle*{2.3}}
\put(38,70){\circle*{2.3}}
\put(42,22){\circle*{2.3}}
\put(46,78){\circle*{2.3}}
\put(50,18){\circle*{2.3}}
\put(54,74){\circle*{2.3}}
\put(58,26){\circle*{2.3}}
\put(62,66){\circle*{2.3}}
\put(66,34){\circle*{2.3}}
\put(70,58){\circle*{2.3}}
\put(74,42){\circle*{2.3}}
\put(78,50){\circle*{2.3}}
\end{picture}
\caption{The finite diamond permutation for \(m=8\) and \(\varepsilon=1\).
The plotted points are \((i,\sigma_\varepsilon(i))\), \(i\in \{1,\dots,16\}\), and lie on
the centered discrete diamond $|i-17/2|+|\sigma_\varepsilon(i)-17/2|=8.$ 
The other finite diamond permutation is obtained by reflection across the
horizontal center line.}
\label{fig:finite-diamond-permutation}
\end{figure}

\begin{problem}
\label{conj}
 For the rectangular power cost
\[
  c_{p,q}(x,y,x',y')=|x-x'|^p|y-y'|^q,\qquad 1<p,q\le2,
\]
determine the complete set of minimizers of \(Q_{m,c_{p,q}}\) over all
permutations \(\sigma\) of \(\{1,2,\ldots,2m\}\).  In particular, decide for which
\((m,p,q)\) the only minimizers are the two finite diamond permutations
\(\sigma_\varepsilon\) in \eqref{eq:finite-diamond-permutation}.
We conjecture that the two finite diamond permutations are the unique minimizers for very general choices of $(m,p,q)$. 
\end{problem}

The simplest non-trivial case $m=2$ in Problem \ref{conj} follows from a direct enumeration, and the cases $m=3,4,5$ can be proved using computer-assisted verification. The case $p=q$ was briefly discussed in \cite[Appendix D(ii)]{wangzhangQOT} with no explicit conjectural solution.

Problem \ref{conj} belongs to a classical line of work on explicitly solvable
QAPs.  Examples include QAPs with Monge or
anti-Monge structure \cite{burkard1998antimongeToeplitz}, Toeplitz structure
\cite{burkard1998antimongeToeplitz}, circulant structure
\cite{deinekoWoeginger1998solvable}, Kalmanson matrices
\cite{deinekoWoeginger1998solvable}, Robinsonian matrices
\cite{laurentSeminaroti2015robinsonian}, block-structured matrices
\cite{celaDeinekoWoeginger2015block}, and linearizable special cases
\cite{celaDeinekoWoeginger2016linearizable}.  To the best of our knowledge,
however, the diamond-type permutations considered here have not been isolated
in the QAP literature.  Their distinctive feature is that they discretize a
continuous optimizer which is typically not Monge, where the permutation must encode
the antimonotone matching of radii together with a combinatorial sign pattern.

\section{Proofs of results in Section \ref{sec:main-results}}\label{sec:proofs}
In this section, we provide detailed proofs of the QOT results and examples in
Section~\ref{sec:main-results}.  The proofs are organized in the same order as
the results.

\subsection{Proof of results in Section \ref{31}}\label{sec:pf1}

\begin{lemma}
\label{lem:gradient}
Let \(\mu,\nu\) be compactly supported probability measures on $\R$, let
\(S=\operatorname{supp}\mu\times\operatorname{supp}\nu\), and let
\(\pi_{\mathrm{dia}}\in\Pi(\mu,\nu)\).  Let
\(c:S\times S\to\R\) be bounded, continuous, and symmetric, and assume that the kernel 
\(((x,y),(x',y'))\mapsto c(x,y,x',y')\) is positive definite on \(S\).  Define
\begin{equation}
\label{eq:linearized-cost}
  \widetilde c(x,y):=
  \int c(x,y,x',y')\,\dd\pi_{\mathrm{dia}}(x',y').
\end{equation}
If \(\pi_{\mathrm{dia}}\) minimizes the classical OT problem
\begin{equation}
\label{eq:linearized-ot}
  \min_{\pi\in\Pi(\mu,\nu)}\int \widetilde c\,\dd\pi,
\end{equation}
then \(\pi_{\mathrm{dia}}\) minimizes \(\mathcal Q_c\) over
\(\Pi(\mu,\nu)\).  If the kernel is strictly positive definite on finite
signed measures supported on \(S\), then the QOT minimizer is unique.
\end{lemma}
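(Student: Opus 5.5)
The plan is the standard convexity/first-variation argument. Fix $\pi\in\Pi(\mu,\nu)$ and set $\sigma:=\pi-\pi_{\mathrm{dia}}$. This is a finite signed measure supported on $S$, with total mass zero and both marginals zero. Since $c$ is bounded and continuous on the compact set $S\times S$, all integrals below are finite. Using the symmetry $c(z,z')=c(z',z)$, I expand the quadratic form bilinearly:
\[
  \mathcal Q_c(\pi)
  =\mathcal Q_c(\pi_{\mathrm{dia}})
  +2\iint c(z,z')\,\dd\sigma(z)\,\dd\pi_{\mathrm{dia}}(z')
  +\iint c(z,z')\,\dd\sigma(z)\,\dd\sigma(z').
\]
By Fubini and the definition \eqref{eq:linearized-cost}, the middle term equals $2\int\widetilde c\,\dd\sigma=2\bigl(\int\widetilde c\,\dd\pi-\int\widetilde c\,\dd\pi_{\mathrm{dia}}\bigr)$. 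By the hypothesis that $\pi_{\mathrm{dia}}$ solves \eqref{eq:linearized-ot}, this term is nonnegative.

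It remains to show that the last term, $\iint c\,\dd\sigma\,\dd\sigma$, is nonnegative. Positive definiteness is only stated for finitely many points, so I need to pass to signed measures. I would approximate $\sigma$ weakly by finitely supported signed measures $\sigma_n=\sum_i a_i\delta_{z_i}$ with $z_i\in S$ and with total variation bounded. One way is to partition $S$ into small Borel cells and put the $\sigma$-mass of each cell at a point of that cell. Then $\sigma_n\otimes\sigma_n\to\sigma\otimes\sigma$ weakly on the compact space $S\times S$. Since $c$ is continuous and bounded there, $\iint c\,\dd\sigma_n\,\dd\sigma_n\to\iint c\,\dd\sigma\,\dd\sigma$. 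Each approximating term is $\sum a_ia_jc(z_i,z_j)\ge0$, so the limit is $\ge0$. Hence $\mathcal Q_c(\pi)\ge\mathcal Q_c(\pi_{\mathrm{dia}})$.

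For uniqueness, suppose $\pi$ is another minimizer. Then equality holds throughout, so $\iint c\,\dd\sigma\,\dd\sigma=0$. This follows because both added terms are nonnegative and sum to zero. Strict positive definiteness on finite signed measures supported on $S$ then forces $\sigma=0$, i.e.\ $\pi=\pi_{\mathrm{dia}}$.

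The only delicate step is the approximation from finite positive definiteness to the measure-level inequality. There I need weak convergence of the product measures $\sigma_n\otimes\sigma_n$, and it must hold for signed measures. Cell-wise discretization handles this directly: for each $\varepsilon>0$, uniform continuity of $c$ on $S\times S$ bounds the error by $\varepsilon\|\sigma\|_{TV}^2$ once the cells have diameter below the corresponding $\delta$. This avoids any general weak-convergence theory for signed measures.
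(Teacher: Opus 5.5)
Your proposal is correct and is essentially the paper's argument: the paper's proof just invokes the first-order convexity argument of \cite[Theorem 12]{wangzhangQOT}, with positive definiteness giving convexity as in \cite[Proposition 1]{wangzhangQOT}, so that linearized optimality gives global optimality and strict positive definiteness gives uniqueness. Your expansion $\mathcal Q_c(\pi)=\mathcal Q_c(\pi_{\mathrm{dia}})+2\int\widetilde c\,\dd\sigma+\iint c\,\dd\sigma\,\dd\sigma$, together with the cell-discretization bound (error at most $\omega(\delta)\|\sigma\|_{TV}^2$, by uniform continuity of $c$ on the compact set $S\times S$), simply makes explicit the steps the paper delegates to those citations.
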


\begin{proof}
This is the first-order convexity argument used in the proof of
\cite[Theorem 12]{wangzhangQOT}; positive definiteness gives convexity as in
\cite[Proposition 1]{wangzhangQOT}, and the optimality of
\(\pi_{\mathrm{dia}}\) for the linearized OT problem gives global optimality.
The same argument with strict positive definiteness gives uniqueness.
\end{proof}

Let \(S\subseteq\R^2\).  A function \(G:S\to\R\) has the \emph{diamond
modularity pattern} on \(S\) if \(G\) is supermodular on
\(S\cap[0,\infty)^2\) and \(S\cap(-\infty,0]^2\), and is submodular on
\(S\cap([0,\infty)\times(-\infty,0])\) and
\(S\cap((-\infty,0]\times[0,\infty))\).  In the continuous cases below,
inequalities involving axes are interpreted by continuity.

\begin{lemma}
\label{lem:sign}
In the same setup as Theorem~\ref{thm:main}, assume in addition that
\(\mu,\nu\) are atomless, and let \(\widetilde c\) be defined by
\eqref{eq:linearized-cost}.  Then \(\widetilde c\) has the diamond modularity
pattern on
\(S=\operatorname{supp}\mu\times\operatorname{supp}\nu\).
\end{lemma}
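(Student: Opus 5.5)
The plan is to reduce the claim to a sign statement on the mixed second derivative of \(\widetilde c\), computed through the radius/sign representation of \(\pi_{\mathrm{dia}}\). Write \(\pi_{\mathrm{dia}}=\law(\varepsilon A,\eta B)\) with \((A,B)\sim\gamma_{\rm ant}\) and \(\varepsilon,\eta\) independent Rademacher signs. Averaging over the four sign choices gives
\[
\widetilde c(x,y)=\tfrac14\,\E\big[h(|x-A|,|y-B|)+h(|x+A|,|y-B|)+h(|x-A|,|y+B|)+h(|x+A|,|y+B|)\big].
\]
From this formula \(\widetilde c(-x,y)=\widetilde c(x,y)=\widetilde c(x,-y)\). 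A function even in each variable maps four-point configurations in one quadrant to configurations in a reflected quadrant, reversing the order in exactly one coordinate when the reflection crosses one axis. Hence supermodularity on \([0,\infty)^2\) gives supermodularity on \((-\infty,0]^2\) and submodularity on the two mixed quadrants. So it suffices to prove that \(\widetilde c\) is supermodular on \(S\cap[0,\infty)^2\).

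For supermodularity on the positive quadrant, I would compute the mixed derivative \(\partial_x\partial_y\) of each term in the distributional sense, for \(x,y>0\). Since \(\mu,\nu\) are atomless, \(A\) and \(B\) are atomless, so the kinks at \(x=\pm A\) and \(y=\pm B\) occur on null sets. I would therefore work with the mixed difference
\[
\Delta=\widetilde c(x_2,y_2)+\widetilde c(x_1,y_1)-\widetilde c(x_1,y_2)-\widetilde c(x_2,y_1),
\]
written as \(\int_{x_1}^{x_2}\int_{y_1}^{y_2}\) of the pointwise mixed derivative, which is justified by continuity of \(W\) up to the boundary and the \(C^2\) assumption. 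For fixed \((a,b)\), the term \(h(|x\mp a|,|y\mp b|)\) has mixed derivative \(\operatorname{sgn}(x\mp a)\operatorname{sgn}(y\mp b)\,W(|x\mp a|,|y\mp b|)\). With \(x,y,a,b\ge0\), the \(+\) terms carry sign \(+1\), so the integrand becomes
\[
\tfrac14\,\E\Big[W(x+A,y+B)+s_xs_y\,W(|x-A|,|y-B|)+s_y\,W(x+A,|y-B|)+s_x\,W(|x-A|,y+B)\Big],
\]
where \(s_x=\operatorname{sgn}(x-A)\) and \(s_y=\operatorname{sgn}(y-B)\). It remains to show this expectation is nonnegative.

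This pointwise nonnegativity is the main obstacle, and I would attack it by cases on the signs \((s_x,s_y)\).
\begin{itemize}
\item If \(s_x=s_y=1\), every term is nonnegative since \(W\ge0\).
\item If \(s_x=1,s_y=-1\), the bracket equals \([W(x+A,y+B)-W(x+A,|y-B|)]+[W(|x-A|,y+B)-W(|x-A|,|y-B|)]\ge0\), by monotonicity of \(W\) in the second coordinate and \(y+B\ge|y-B|\). The case \(s_x=-1,s_y=1\) is symmetric.
\item If \(s_x=s_y=-1\) (that is, \(A>x\) and \(B>y\)), the bracket is \(W(x+A,y+B)+W(A-x,B-y)-W(x+A,B-y)-W(A-x,y+B)\), which is \(\ge0\) by supermodularity of \(W\).
\end{itemize}
Each pointwise integrand is thus nonnegative, so the expectation is too, and the antimonotone coupling is never even needed. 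If some subtlety at the boundary \(A=x\) or \(B=y\) arises, atomlessness puts it on a null set. Integrating over the rectangle then gives \(\Delta\ge0\), completing the proof via the reflection step.
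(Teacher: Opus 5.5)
Your proposal is correct and follows essentially the same route as the paper: the radius/sign representation $(X',Y')=(\varepsilon A,\eta B)$, writing the rectangle increment as the integral of the sign-weighted $W$, the same four-case analysis (using $W\ge0$, coordinatewise monotonicity, and supermodularity of $W$, with no use of the antimonotone coupling), and evenness in each coordinate to pass to the other quadrants. The differences are only cosmetic. You do the reflection step first, and you dispose of the kink lines $u=\pm a$, $v=\pm b$ by a null-set remark, whereas the paper splits the rectangle along these lines and shifts them by $\epsilon$. Those lines are Lebesgue-null in $(u,v)$ for each fixed $(a,b)$ anyway, so atomlessness is not really what is needed there.
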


\begin{proof}
Write \((X',Y')=(\varepsilon A,\eta B)\) as in the diamond construction,
where \((A,B)\) are antimonotonically coupled positive radii and
\(\varepsilon,\eta\) are independent Rademacher random variables.  We prove
the four-point inequalities directly.  Let \(0\le x_1<x_2\) and
\(0\le y_1<y_2\).  Since the continuous extension of \(W\) is bounded, the
mixed rectangle increment of \(\widetilde c\) is
\begin{align}
&\widetilde c(x_2,y_2)+\widetilde c(x_1,y_1)
-\widetilde c(x_2,y_1)-\widetilde c(x_1,y_2)\nonumber\\
&\quad =
\E\!\left[
h(|x_2-X'|,|y_2-Y'|)
+h(|x_1-X'|,|y_1-Y'|)
\right.\nonumber\\
&\qquad\qquad\left.
-h(|x_2-X'|,|y_1-Y'|)
-h(|x_1-X'|,|y_2-Y'|)
\right]\nonumber\\
&\quad =
\int_{x_1}^{x_2}\int_{y_1}^{y_2}
\E\!\left[
\operatorname{sgn}(u-X')\operatorname{sgn}(v-Y')
W(|u-X'|,|v-Y'|)
\right]\,\dd v\,\dd u .\label{eq:integral}
\end{align}
The last equality is obtained as follows.  For fixed \(X'=x'\) and
\(Y'=y'\), split \([x_1,x_2]\times[y_1,y_2]\) by the lines \(u=x'\) and
\(v=y'\).  On each resulting subrectangle the signs of \(u-x'\) and \(v-y'\)
are constant, so the usual mixed-derivative identity applies to
\((u,v)\mapsto h(|u-x'|,|v-y'|)\).  Summing the subrectangles gives the
equality in \eqref{eq:integral}.  If a cutting line lies on the boundary, or a
subrectangle touches one of the lines, the same identity follows by moving the
line by \(\epsilon\) and then letting \(\epsilon\downarrow0\), using
continuity of \(h\) and the continuous extension of \(W\).  Thus, it suffices
to show that, for \(u,v\ge0\), the conditional average of the integrand in \eqref{eq:integral} given
\(A=a,B=b\) is nonnegative.  Set
\begin{equation}
\label{eq:delta-average}
  \Delta_{u,v}(a,b)
  =
  \frac14\sum_{\epsilon,\eta=\pm1}
  \operatorname{sgn}(u-\epsilon a)\operatorname{sgn}(v-\eta b)
  W(|u-\epsilon a|,|v-\eta b|).
\end{equation}
There are four cases.

\begin{itemize}
\item If \(u\ge a\) and \(v\ge b\), every sign is positive, so
\(\Delta_{u,v}(a,b)\ge0\).

\item If \(u<a\) and \(v\ge b\), then
\[
\begin{aligned}
4\Delta_{u,v}(a,b)
&=
-W(a-u,v-b)-W(a-u,v+b)\\
&\quad+W(a+u,v-b)+W(a+u,v+b)\ge0,
\end{aligned}
\]
by monotonicity of \(W\) in the first coordinate.

\item If \(u\ge a\) and \(v<b\), the same argument gives
\(\Delta_{u,v}(a,b)\ge0\), now using monotonicity of \(W\) in the second
coordinate.

\item If \(u<a\) and \(v<b\), then
\[
\begin{aligned}
4\Delta_{u,v}(a,b)
&=
W(a-u,b-v)-W(a-u,b+v)\\
&\quad-W(a+u,b-v)+W(a+u,b+v)\ge0,
\end{aligned}
\]
because \(a-u\le a+u\), \(b-v\le b+v\), and \(W\) is supermodular.
\end{itemize}
Hence
\(\widetilde c\) is supermodular in the first quadrant.  Since
\(\widetilde c\) is even in each coordinate, the same statement holds in the
third quadrant, while changing the sign of exactly one coordinate reverses
the mixed rectangle increment.  Thus, \(\widetilde c\) is submodular in the
second and fourth quadrants.  This proves the diamond modularity pattern.
\end{proof}

\begin{lemma}
\label{lem:quadrant-ot}
Let \(\mu,\nu\in\Pcsym(\R)\) be atomless, and put
\(S=\operatorname{supp}\mu\times\operatorname{supp}\nu\).  Let
\(\ell:S\to\R\) be continuous and even in each coordinate, and assume that it
has the diamond modularity pattern on \(S\).  Then the diamond transport minimizes the linear OT problem
\(\min_{\pi\in\Pi(\mu,\nu)}\int \ell(x,y)\,\dd\pi(x,y)\).
\end{lemma}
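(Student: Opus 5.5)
Since $\ell$ is even in each coordinate, the integral $\int\ell\,\dd\pi$ depends on $\pi$ only through the law of $(|X|,|Y|)$. Precisely, setting $G(r,s):=\ell(r,s)$ for $r\in\operatorname{supp}\mu_{\mathrm{abs}}$ and $s\in\operatorname{supp}\nu_{\mathrm{abs}}$, we have $\int\ell\,\dd\pi=\int G\,\dd\gamma_\pi$ with $\gamma_\pi=(|x|,|y|)_\#\pi$. By the lifting argument in the proof of Theorem~\ref{thm:absolute-value-classic}, every $\gamma\in\Pi(\mu_{\mathrm{abs}},\nu_{\mathrm{abs}})$ arises as some $\gamma_\pi$. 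The linear OT problem therefore reduces to minimizing $\int G\,\dd\gamma$ over $\Pi(\mu_{\mathrm{abs}},\nu_{\mathrm{abs}})$. The diamond transport has $\gamma_{\pi_{\mathrm{dia}}}=\gamma_{\rm ant}$. It thus suffices to show that the antimonotone coupling minimizes $\int G\,\dd\gamma$. For this it is enough, by the classical (non-strict) rearrangement inequality for continuous supermodular costs on a product of closed subsets of $[0,\infty)$, that $G$ be supermodular on $\operatorname{supp}\mu_{\mathrm{abs}}\times\operatorname{supp}\nu_{\mathrm{abs}}$.

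\emph{Wait}: this conclusion has the wrong direction, and that is the key point. Supermodularity of a cost makes the \emph{comonotone} coupling optimal, and submodularity makes the antimonotone one optimal. By hypothesis $\ell$ is supermodular on the first quadrant, so restricted to radii the minimizer is comonotone, not antimonotone. The reduction to radii is therefore too crude. The real mechanism is that on a diamond-type coupling the mass sits in all four quadrants, and in the second and fourth quadrants $\ell$ is submodular. The plan must therefore use the quadrant structure genuinely, not just the radii.

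Revised plan. Given an arbitrary $\pi\in\Pi(\mu,\nu)$, first symmetrize it. Let $\pi^{s}$ be the average of $\pi$ under the four sign maps $(x,y)\mapsto(\pm x,\pm y)$. By symmetry of the marginals $\pi^s\in\Pi(\mu,\nu)$, and since $\ell$ is even, $\int\ell\,\dd\pi^s=\int\ell\,\dd\pi$. The coupling $\pi^s$ is determined by $\gamma_\pi$ with independent uniform signs. Now restrict to the second quadrant, where $\ell(-r,s)$ with $r,s\ge0$ is submodular in $(r,s)$. There $\pi^s$ puts mass $1/4$ with radius law $\gamma_\pi$, while the diamond transport puts mass $1/4$ with radius law $\gamma_{\rm ant}$. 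Hence
\[
\int\ell\,\dd\pi=\int \ell(-r,s)\,\dd\gamma_\pi(r,s)\ \ge\ \int\ell(-r,s)\,\dd\gamma_{\rm ant}(r,s)=\int\ell\,\dd\pi_{\mathrm{dia}},
\]
by the rearrangement inequality: $(r,s)\mapsto\ell(-r,s)$ is submodular, equivalently $-\ell(-r,s)$ is supermodular, so the antimonotone coupling minimizes. The first equality uses evenness, $\ell(x,y)=\ell(|x|,|y|)=\ell(-|x|,|y|)$.

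The step I expect to need care is exactly this apparent paradox. Evenness in each coordinate forces $\ell(r,s)=\ell(-r,s)$, which is simultaneously supermodular and submodular on the positive quadrant, so $G$ is in fact modular there. The diamond modularity pattern, combined with evenness, therefore says only that $G$ is submodular in the sense needed. I will verify this carefully: for $0\le r_1\le r_2$ and $0\le s_1\le s_2$, apply the second-quadrant submodularity to the points $-r_2\le -r_1$, with the orientation flip accounted for. This yields exactly the inequality $G(r_2,s_2)+G(r_1,s_1)\le G(r_2,s_1)+G(r_1,s_2)$, which is submodularity of $G$ on $[0,\infty)^2$. The antimonotone optimality then follows from \cite[Theorem~3.1]{puccetti2015extremal} in its non-strict form, applied with the continuous, bounded cost $-G$ on the compact product of supports; atomlessness ensures $\gamma_{\rm ant}$ is well defined via quantiles. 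Boundary points on the axes are handled by continuity.
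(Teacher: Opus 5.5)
\textbf{Gap: the revised argument reaches the right conclusion through two errors that cancel.}

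Your first paragraph was already a complete and correct proof. It is essentially the paper's argument. The paper reaches the radius problem by averaging $\pi$ over independent signs rather than via the lifting from Theorem~\ref{thm:absolute-value-classic}, but that difference is immaterial. The ``Wait'' retracts a correct statement. For \emph{minimization}, a supermodular cost is minimized by the antimonotone coupling, and a submodular cost by the comonotone coupling. For example, $\E[RS]$ (cost $rs$, supermodular) is smallest under $\gamma_{\rm ant}$, while $\E[(R-S)^2]$ (submodular) is smallest under the comonotone coupling. This is exactly the direction of \cite[Theorem~3.1]{puccetti2015extremal} as used in Theorem~\ref{thm:absolute-value-classic}. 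So the first-quadrant supermodularity of $\ell$ is precisely what is needed, and the reduction to radii is not too crude.

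The first error is a sign slip in your ``careful verification''. Take $x_1=-r_2\le x_2=-r_1\le 0$ and $s_1\le s_2$. Second-quadrant submodularity then reads
\[
\ell(-r_1,s_2)+\ell(-r_2,s_1)\le \ell(-r_1,s_1)+\ell(-r_2,s_2).
\]
By evenness this is $G(r_2,s_2)+G(r_1,s_1)\ge G(r_2,s_1)+G(r_1,s_2)$, which is \emph{supermodularity} of $G$. Consequences:
\begin{itemize}
\item For even $\ell$, the second- and fourth-quadrant conditions are equivalent to the first-quadrant one, not in conflict with it.
\item There is no paradox, and $G$ need not be modular. For instance, $\ell(x,y)=|x||y|$ has the diamond modularity pattern, yet $G(r,s)=rs$ is strictly supermodular.
\item The map $(r,s)\mapsto\ell(-r,s)$ coincides with $G$ on $[0,\infty)^2$, so it is supermodular, not submodular.
\end{itemize}

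The second error is the claim that a submodular cost is minimized by the antimonotone coupling, which is false. Your closing step, applying \cite[Theorem~3.1]{puccetti2015extremal} to $-G$, would under the correct theorem show that $\gamma_{\rm ant}$ \emph{maximizes} $\int G\,\dd\gamma$. As written, the key inequality is unjustified.

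\textbf{Fix.} Discard everything from ``Wait'' onward. By hypothesis $G$ is supermodular on $\operatorname{supp}\mu_{\mathrm{abs}}\times\operatorname{supp}\nu_{\mathrm{abs}}$, and it is continuous and bounded on this compact set. The non-strict rearrangement inequality for supermodular costs then gives directly
\[
\int\ell\,\dd\pi=\int G\,\dd\gamma_\pi\ge\int G\,\dd\gamma_{\rm ant}=\int\ell\,\dd\pi_{\mathrm{dia}}.
\]
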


\begin{proof}
Let \(\pi\in\Pi(\mu,\nu)\), and let \((X,Y)\sim\pi\).  Let
\(\bar\pi\) be the law of \((\varepsilon X,\eta Y)\), where
\(\varepsilon,\eta\) are independent Rademacher signs, independent of
\((X,Y)\).  Since \(\mu,\nu\) are symmetric, \(\bar\pi\in\Pi(\mu,\nu)\), and
since \(\ell\) is even in each coordinate,
\[
  \int \ell\,\dd\bar\pi=\int \ell\,\dd\pi .
\]
Thus, it is enough to minimize over fourfold-symmetric couplings.  For such a
coupling, the objective is determined by the law \(\gamma\) of
\((R,T)=(|X|,|Y|)\): $\int \ell\,\dd\bar\pi
  =
  \int \ell\,\dd\gamma$, 
where \(\gamma\in\Pi(\mu_{\mathrm{abs}},\nu_{\mathrm{abs}})\).

By the diamond modularity pattern, \(\ell\) is supermodular on the first
quadrant.  The one-dimensional rearrangement theorem for supermodular
costs \cite[Chapter~3.1]{rachev1998mass} implies that
\[
  \int \ell\,\dd\gamma
  \ge
  \int \ell\,\dd\piant(\mu_{\mathrm{abs}},\nu_{\mathrm{abs}})
\]
for every \(\gamma\in\Pi(\mu_{\mathrm{abs}},\nu_{\mathrm{abs}})\).  The
fourfold-symmetric lift of this antimonotone radius coupling is exactly
\(\pi_{\mathrm{dia}}\).  Hence \(\pi_{\mathrm{dia}}\) minimizes the linear OT
problem.
\end{proof}

\begin{proof}[Proof of Theorem~\ref{thm:main}]
First assume that \(\mu,\nu\) are atomless.  By Lemma~\ref{lem:sign}, the
first variation \(\widetilde c\) has the diamond modularity pattern required
by Lemma~\ref{lem:quadrant-ot}.  Moreover, \(\widetilde c\) is continuous by
bounded convergence and is even in each coordinate by the independent sign
symmetry of \(\pi_{\mathrm{dia}}\).  Consequently, \(\pi_{\mathrm{dia}}\)
minimizes the linearized OT problem with cost \(\widetilde c\).  Lemma~\ref{lem:gradient}
upgrades this first-order optimality to global optimality.

Now let \(\mu,\nu\) be arbitrary compactly supported symmetric marginals.  If
one support diameter is zero, then \(\Pi(\mu,\nu)\) is a singleton and there is
nothing to prove.  Otherwise choose atomless symmetric probability measures
\(\mu_n,\nu_n\), supported respectively on \(\operatorname{conv}(\operatorname{supp}\mu)\) and \(\operatorname{conv}(\operatorname{supp}\nu)\), such that
\(\mu_n\Rightarrow\mu\) and \(\nu_n\Rightarrow\nu\).
Let \(\pi_{\mathrm{dia}}^n\) be the diamond transport for
\((\mu_n,\nu_n)\).  By the atomless case, \(\pi_{\mathrm{dia}}^n\) minimizes
\(\mathcal Q_c\) over \(\Pi(\mu_n,\nu_n)\).  Extend the restriction of \(c\)
from the compact set \((\operatorname{conv}(\operatorname{supp}\mu)\times \operatorname{conv}(\operatorname{supp}\nu))^2\) to a bounded continuous
function on \(\R^4\).  Since this extension does not change the objective on
the couplings under consideration, and since boundedness gives the uniform
integrability required in \cite[Proposition 3]{wangzhangQOT}, QOT stability
implies that every weak limit point of \((\pi_{\mathrm{dia}}^n)\) is a
minimizer for the limit marginals \((\mu,\nu)\).
Finally,
\(\mu_{n,\mathrm{abs}}\Rightarrow\mu_{\mathrm{abs}}\) and
\(\nu_{n,\mathrm{abs}}\Rightarrow\nu_{\mathrm{abs}}\).  The quantile
construction of the antimonotone radius coupling therefore gives
\(\pi_{\mathrm{dia}}^n\Rightarrow\pi_{\mathrm{dia}}\).  Hence
\(\pi_{\mathrm{dia}}\) minimizes \(\mathcal Q_c\) over \(\Pi(\mu,\nu)\).

If the kernel is strictly positive definite on finite signed measures
supported on \(S\), uniqueness follows from strict convexity on
\(\Pi(\mu,\nu)\).  Indeed, if \(\pi_0,\pi_1\) are distinct minimizers and
\(\sigma=\pi_1-\pi_0\), then
\[
  \mathcal Q_c\!\left(\frac{\pi_0+\pi_1}{2}\right)
  =
  \frac12\mathcal Q_c(\pi_0)+\frac12\mathcal Q_c(\pi_1)
  -\frac14\iint c(z,z')\,\dd\sigma(z)\,\dd\sigma(z'),
\]
and the last integral is strictly positive, contradicting optimality of the
midpoint.
\end{proof}

\begin{proof}[Proof of Example~\ref{ex:weighted-kernels}]
(i) We use the shorthand notation $u=ar^2+bs^2$ and let $h(r,s)=(\beta+ar^2+bs^2)^{-\gamma}=(\beta+u)^{-\gamma}.$ 
Then
\[
  W(r,s):=\frac{\partial^2 h}{\partial r\,\partial s}(r,s)
  =
  4ab\gamma(\gamma+1)rs(\beta+u)^{-\gamma-2}\ge0 .
\]
Moreover,
\[
  \frac{\partial W}{\partial r}(r,s)
  =
  4ab\gamma(\gamma+1)s(\beta+u)^{-\gamma-3}
  \big(\beta+bs^2-(2\gamma+3)ar^2\big),
\]
and similarly
\[
  \frac{\partial W}{\partial s}(r,s)
  =
  4ab\gamma(\gamma+1)r(\beta+u)^{-\gamma-3}
  \big(\beta+ar^2-(2\gamma+3)bs^2\big).
\]
Thus, \(W\) is coordinatewise nondecreasing on
\([0,D_x]\times[0,D_y]\) under
\(\beta\ge(2\gamma+3)\max\{aD_x^2,bD_y^2\}\).  Finally,
\[
\begin{aligned}
  \frac{\partial^2 W}{\partial r\,\partial s}(r,s)
  &=
  4ab\gamma(\gamma+1)(\beta+u)^{-\gamma-4}  \\
  &\quad \times
  \left(
    (\beta+u)\big(\beta-(2\gamma+3)u\big)
    +4(\gamma+2)(\gamma+3)ab r^2s^2
  \right).
\end{aligned}
\]
The bracket is nonnegative under the same condition.  Indeed, put
\(K:=2\gamma+3\), \(A=ar^2\), \(B=bs^2\), and
\(M=\max\{aD_x^2,bD_y^2\}\).  Then \(0\le A,B\le M\) and
\(\beta\ge KM\).  For fixed \(A,B\), the function
\[
  \beta\mapsto (\beta+A+B)\bigl(\beta-K(A+B)\bigr)
\]
is nondecreasing on \([KM,\infty)\), since its derivative is
\[
  2\beta+(1-K)(A+B)
  \ge 2KM+2(1-K)M
  =2M\ge0 .
\]
Therefore
\[
\begin{aligned}
&(\beta+A+B)\bigl(\beta-K(A+B)\bigr)
  +4(\gamma+2)(\gamma+3)AB  \\
&\quad\ge
(KM+A+B)K(M-A-B)+4(\gamma+2)(\gamma+3)AB  \\
&\quad=
K^2(M-A)(M-B)
+KA(M-A)+KB(M-B)
+(2K+3)AB
\ge0 .
\end{aligned}
\]
Hence \(W\) is supermodular.  Positive definiteness follows from
\[
  (\beta+ar^2+bs^2)^{-\gamma}
  =
  \frac1{\Gamma(\gamma)}
  \int_0^\infty
  t^{\gamma-1}e^{-\beta t}e^{-atr^2}e^{-bts^2}\,\mathrm{d}t,
\]
which writes the kernel as a positive mixture of Gaussian positive definite
kernels.  The same representation gives a strictly positive spectral density
on \(\R^2\), so the kernel is strictly positive definite on finite signed
measures supported on \(\operatorname{supp}\mu\times\operatorname{supp}\nu\).
Theorem~\ref{thm:main} then proves (i).

(ii) Let $h(r,s)=e^{-ar^2-bs^2}.$ 
Then $W(r,s)=4abrs\,e^{-ar^2-bs^2}.$ 
A direct computation gives
\[
  \frac{\partial W}{\partial r}
  =
  4abs\,e^{-ar^2-bs^2}(1-2ar^2),
  \qquad
  \frac{\partial W}{\partial s}
  =
  4abr\,e^{-ar^2-bs^2}(1-2bs^2),
\]
and
\[
  \frac{\partial^2 W}{\partial r\,\partial s}
  =
  4ab e^{-ar^2-bs^2}(1-2ar^2)(1-2bs^2).
\]
Thus, \(W\) is nonnegative, coordinatewise nondecreasing, and supermodular on
\([0,D_x]\times[0,D_y]\) if
\(2aD_x^2\le1\) and \(2bD_y^2\le1\).  Positive definiteness and strict positive
definiteness follow from the strictly positive Gaussian spectral density.
Theorem~\ref{thm:main} then proves (ii).  
\end{proof}

\subsection{Proof of results in Section \ref{32}}

\begin{lemma}\label{lemma:abs integrability}
Let \(\pi_0,\pi_1\in\Pi(\mu,\nu)\) satisfy
\(\mathcal Q(\pi_0)+\mathcal Q(\pi_1)<\infty\), and set
\(\sigma:=\pi_1-\pi_0\). Under the assumptions of
Theorem~\ref{thm:taylor-products}(ii)--(iii),
\[
\iint
\psi_1(|x-x'|)\psi_2(|y-y'|)
\,\mathrm{d}|\sigma|(x,y)\,\mathrm{d}|\sigma|(x',y')<\infty .
\]
\end{lemma}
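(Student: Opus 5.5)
The plan is to dominate $|\sigma|$ by $\pi_0+\pi_1$. Then the double integral against $|\sigma|\otimes|\sigma|$ is bounded by
\[
\mathcal Q(\pi_0)+\mathcal Q(\pi_1)+2\iint \psi_1(|x-x'|)\psi_2(|y-y'|)\,\dd\pi_0(x,y)\,\dd\pi_1(x',y').
\]
Since $\mathcal Q(\pi_0)+\mathcal Q(\pi_1)<\infty$ by hypothesis, only the cross term remains to be controlled. The tool is a quasi-triangle inequality for each $\psi_i$. Under Theorem~\ref{thm:taylor-products}(iii), $u\mapsto\psi_i(|u|)$ has the full-support cosine representation \eqref{eq:cosine-representation}, so it is a continuous negative definite function. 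For such functions $\sqrt{\psi_i(|\cdot|)}$ is subadditive: $\sqrt{\psi_i(|u+v|)}\le\sqrt{\psi_i(|u|)}+\sqrt{\psi_i(|v|)}$, which is standard (e.g.\ \cite{schilling2012bernstein}). Squaring gives
\[
\psi_i(|u-v|)\le 2\psi_i(|u|)+2\psi_i(|v|).
\]
(If one wanted to cover profiles only in $\Xi$, such as $\psi(r)=r^2$, the same bound follows from convexity with $\psi(0)=0$ in that case. I will state the argument under (iii), where it is needed.)

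Applying this bound in each coordinate gives
\[
\psi_1(|x-x'|)\psi_2(|y-y'|)\le 4\bigl(\psi_1(|x|)+\psi_1(|x'|)\bigr)\bigl(\psi_2(|y|)+\psi_2(|y'|)\bigr).
\]
Integrating against $\pi_0(\dd x,\dd y)\,\pi_1(\dd x',\dd y')$, the terms pairing $\psi_1(|x|)$ with $\psi_2(|y'|)$, or $\psi_1(|x'|)$ with $\psi_2(|y|)$, factor by independence. They are bounded by $\int\psi_1(|x|)\,\mu(\dd x)\int\psi_2(|y|)\,\nu(\dd y)<\infty$ using \eqref{eq:a2}. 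The other two terms are the mixed moments
\[
M_k:=\int\psi_1(|x|)\psi_2(|y|)\,\dd\pi_k(x,y),\qquad k=0,1,
\]
so the proof reduces to showing $M_0,M_1<\infty$.

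This finiteness of $M_k$ is the main obstacle, because \eqref{eq:a2} controls only the marginals and not the joint moment. I would extract it from $\mathcal Q(\pi_k)<\infty$ as follows.
\begin{enumerate}
\item By Fubini, $\mathcal Q(\pi_k)<\infty$ implies $\int\psi_1(|x-x_0|)\psi_2(|y-y_0|)\,\dd\pi_k(x,y)<\infty$ for $\pi_k$-a.e.\ $(x_0,y_0)$. Fix one such point.
\item The quasi-triangle inequality gives $\psi_1(|x|)\le2\psi_1(|x-x_0|)+2\psi_1(|x_0|)$, and similarly $\psi_2(|y|)\le2\psi_2(|y-y_0|)+2\psi_2(|y_0|)$.
\item Multiplying these bounds and integrating against $\pi_k$, $M_k$ is at most four times the finite integral from the first step. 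The remaining terms involve $\psi_1(|x-x_0|)$ or $\psi_2(|y-y_0|)$ times a constant, or a pure constant.
\item Those remaining terms are bounded by \eqref{eq:a2}, after one more use of the quasi-triangle inequality to compare, e.g., $\psi_1(|x-x_0|)$ with $2\psi_1(|x|)+2\psi_1(|x_0|)$.
\end{enumerate}
Hence $M_0,M_1<\infty$, the cross term is finite, and the lemma follows.
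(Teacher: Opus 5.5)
Your proof is correct. Its skeleton matches the paper's: you obtain the quasi-triangle inequality $\psi_i(|u-v|)\le 2\psi_i(|u|)+2\psi_i(|v|)$ from negative definiteness, reduce to finiteness of the mixed moments $\int\psi_1(|x|)\psi_2(|y|)\,\mathrm{d}\pi_k$, and dominate $|\sigma|$ by $\pi_0+\pi_1$.

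The genuine difference is how you extract the mixed moment from $\mathcal Q(\pi_k)<\infty$.
\begin{itemize}
\item The paper fixes a box $[-R,R]^2$ of positive $\pi_k$-mass $m$. It uses the reverse triangle inequality for $d_i=\psi_i(|\cdot-\cdot|)^{1/2}$ to get $\psi_i(|u-u'|)\ge\psi_i(|u|)/4$ whenever $u'$ lies in the box and $\psi_i(|u|)>4\psi_i(R)$. This bounds the mixed moment on the region $G$, where both $\psi_i$ are large, by $16\,\mathcal Q(\pi_k)/m$. The complement $G^c$ is then handled by the marginal moments.
\item You instead use Tonelli to pick a single anchor $z_0=(x_0,y_0)$ with $\int c(z,z_0)\,\mathrm{d}\pi_k(z)<\infty$, and apply the forward quasi-triangle inequality centred at $z_0$.
\end{itemize}
Your route avoids the case split and the monotonicity of $\psi_i$. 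The paper's route gives a bound in terms of fixed data ($m$, $\psi_i(R)$, $\mathcal Q(\pi_k)$ and the marginal moments), and it parallels the argument of Lemma~\ref{lem:finite-mixed}.

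Two minor remarks. Your separate treatment of the diagonal terms $\mathcal Q(\pi_0)+\mathcal Q(\pi_1)$ is harmless but unnecessary: once $M_0,M_1<\infty$, the product bound controls all of $(\pi_0+\pi_1)^{\otimes 2}$, as in the paper. Your parenthetical about profiles only in $\Xi$ is also not needed. Positive definiteness of $e^{-\alpha\psi_i(|u-v|)}$ for every $\alpha>0$ already gives conditional negative definiteness by Schoenberg, and hence the same quasi-triangle inequality, without any appeal to convexity.
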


\begin{proof}
Since \(u\mapsto \psi_i(|u|)\) is continuous negative definite and vanishes at
\(0\), \(d_i(u,v):=\psi_i(|u-v|)^{1/2}\) is a pseudometric. Hence
\begin{align}
    \psi_i(|u-v|)\le 2\psi_i(|u|)+2\psi_i(|v|). \label{1}
\end{align}
We first claim that, if \(\pi\in\Pi(\mu,\nu)\) and \(\mathcal Q(\pi)<\infty\), then
\begin{align}
    \int \psi_1(|x|)\psi_2(|y|)\,\mathrm{d}\pi(x,y)<\infty . \label{2}
\end{align}
Choose \(R>0\) with \(m:=\pi([-R,R]^2)>0\), and put \(A_i:=\psi_i(R)\).
If \((x',y')\in[-R,R]^2\) and
\(\psi_i(|u|)>4A_i\), then the triangle inequality for \(d_i\) gives
\(\psi_i(|u-u'|)\ge \psi_i(|u|)/4\). Thus, on
\(G:=\{\psi_1(|x|)>4A_1,\ \psi_2(|y|)>4A_2\}\),
\[
\mathcal Q(\pi)\ge
\frac{m}{16}\int_G \psi_1(|x|)\psi_2(|y|)\,\mathrm{d}\pi(x,y).
\]
On \(G^c\),
\[
\psi_1(|x|)\psi_2(|y|)
\le 4A_1\psi_2(|y|)+4A_2\psi_1(|x|),
\]
which is integrable by the marginal moment assumptions. This proves \eqref{2}.

Apply \eqref{2} to \(\pi_0\) and \(\pi_1\), and put \(\theta:=\pi_0+\pi_1\). Then
\(\int\psi_1(|x|)\psi_2(|y|)\,\mathrm{d}\theta<\infty\). By \eqref{1},
\[
\psi_1(|x-x'|)\psi_2(|y-y'|)
\le
4(\psi_1(|x|)+\psi_1(|x'|))
(\psi_2(|y|)+\psi_2(|y'|)),
\]
and the right-hand side is integrable with respect to
\(\theta\otimes\theta\). Since \(|\sigma|\le\theta\), the result follows.
\end{proof}

\begin{proof}[Proof of Theorem~\ref{thm:taylor-products}]
We split the proof into five steps.

\smallskip\noindent\textbf{Step I: optimality for compactly supported marginals with an extra convexity condition.} 
First, we prove optimality assuming moreover that \(\mu,\nu\in\Pcsym(\R)\) with finite support diameters $D_1,D_2$ respectively, and \(\psi_i'\) has finite one-sided limits
at \(0\) and at \(D_i\), and satisfies the condition
\begin{align}
     \inf_{\substack{0<r<D_i\\ \psi_i'(r)>0}}
  \frac{\psi_i''(r)}{(\psi_i'(r))^2}>0.\label{eq:cond1}
\end{align}  For \(\alpha>0\), define
\begin{equation}
\label{eq:taylor-kernel}
  K_\alpha(x,y,x',y')
  =
  \exp(-\alpha\psi_1(|x-x'|)-\alpha\psi_2(|y-y'|)).
\end{equation}
Write
\[
  h_\alpha(r,s):=a_1(r)a_2(s),
  \qquad
  a_i(r):=e^{-\alpha\psi_i(r)},\quad i=1,2 .
\]
We check the hypotheses of Assumption \ref{ass:convex-diamond} for the cost
\(K_\alpha=h_\alpha(|x-x'|,|y-y'|)\), for all sufficiently small
\(\alpha>0\).

\begin{itemize}

\item \emph{Regularity.}
The function \(h_\alpha\) is continuous on
\([0,D_1]\times[0,D_2]\) and \(C^2\) on
\((0,D_1)\times(0,D_2)\).  Moreover,
\[
  a_i'(r)=-\alpha \psi_i'(r)e^{-\alpha\psi_i(r)},
\]
and the assumed finite one-sided limits of \(\psi_i'\) at \(0\) and \(D_i\)
imply that \(a_i'\), and hence
\[
  W_\alpha(r,s)
  :=
  \frac{\partial^2 h_\alpha}{\partial r\,\partial s}(r,s)
  =
  a_1'(r)a_2'(s),
\]
has a finite continuous extension to the closed rectangle.

\item \emph{Positive definiteness.}
By our positive definite assumption, each kernel
\((u,v)\mapsto e^{-\alpha\psi_i(|u-v|)}\) is positive definite on \(\R\).
Hence, their product \(K_\alpha\) is positive definite by the Schur product
theorem \cite[Theorem 3.1.12]{berg1984harmonic}.

\item \emph{Sign, monotonicity, and supermodularity of \(W_\alpha\).}
Since \(\psi_i\) is nondecreasing, \(a_i'\le0\).  Also, for
\(0<r<D_i\),
\[
  a_i''(r)
  =
  e^{-\alpha\psi_i(r)}
  \big(\alpha^2(\psi_i'(r))^2-\alpha\psi_i''(r)\big).
\]
By \eqref{eq:cond1}, choosing \(\alpha>0\) smaller than the lower bound for
\(\psi_i''/(\psi_i')^2\), uniformly for \(i=1,2\), gives \(a_i''\le0\).
When \(\psi_i'(r)=0\), convexity gives \(\psi_i''(r)\ge0\), so the same
inequality holds.  Therefore, \(a_i\) is nonincreasing and concave.  Since
\(W_\alpha=a_1'a_2'\), we have \(W_\alpha\ge0\), while
\[
  \partial_r W_\alpha=a_1''a_2'\ge0,
  \qquad
  \partial_s W_\alpha=a_1'a_2''\ge0,
\]
and the four-point supermodularity inequality follows from
\(a_1'',a_2''\le0\), equivalently from the fact that \(a_1'\) and \(a_2'\)
are nonincreasing and nonpositive.
\end{itemize}

Thus, for $\alpha>0$ small enough, Theorem~\ref{thm:main} applies to \(K_\alpha\), and
\begin{equation}
\label{eq:jalpha-minimization}
  \mathcal Q_\alpha(\pi_{\mathrm{dia}})
  \le
  \mathcal Q_\alpha(\pi),
  \qquad \pi\in\Pi(\mu,\nu),
\end{equation}
where \(\mathcal Q_\alpha\) is the quadratic-form energy with kernel
\eqref{eq:taylor-kernel}.

Write \(A=\psi_1(|x-x'|)\) and \(B=\psi_2(|y-y'|)\).  Uniformly on the compact
support,
\begin{equation}
\label{eq:taylor-expansion}
  e^{-\alpha(A+B)}
  =
  1-\alpha(A+B)
 +\frac{\alpha^2}{2}(A+B)^2
  +O(\alpha^3).
\end{equation}
The \(O(\alpha^3)\) bound is uniform over all pairs
\((x,y),(x',y')\) in the common compact support, because \(A+B\) is bounded.
After integration against any \(\pi\otimes\pi\), it remains \(O(\alpha^3)\)
with a constant independent of \(\pi\).
In the quadratic-form energy, the terms \(1\), \(A\), \(B\), \(A^2\), and \(B^2\) depend
only on the fixed marginals.  Substituting \eqref{eq:taylor-expansion} into
\eqref{eq:jalpha-minimization} gives, for every \(\pi\),
\begin{equation}
\label{eq:taylor-energy-gap}
\begin{aligned}
0
&\le
\mathcal Q_\alpha(\pi)-\mathcal Q_\alpha(\pi_{\mathrm{dia}})=
\alpha^2
\left(
\iint AB\,\dd\pi\,\dd\pi
-
\iint AB\,\dd\pi_{\mathrm{dia}}\,\dd\pi_{\mathrm{dia}}
\right)
 +O(\alpha^3).
\end{aligned}
\end{equation}
Dividing \eqref{eq:taylor-energy-gap} by \(\alpha^2\) and letting
\(\alpha\downarrow0\) proves that \(\pi_{\mathrm{dia}}\) minimizes the product
cost \eqref{eq:taylor-product-cost}.

\smallskip\noindent\textbf{Step II: optimality for compactly supported marginals by uniform convergence.} Now suppose only the extra constraint that \(\mu,\nu\) are compactly supported.  Fix
\(\rho\in(1,2)\) and set
\[
  \psi_{i,\varepsilon}(r):=\psi_i(r)+\varepsilon r^\rho .
\]
The Schoenberg condition is preserved by addition, since
\(u\mapsto |u|^\rho\) is continuous negative definite.  Also,
\[
  \psi_{i,\varepsilon}''(r)
  =
  \psi_i''(r)+\varepsilon\rho(\rho-1)r^{\rho-2}>0.
\]
On every compact interval \([0,D]\), \(\psi_{i,\varepsilon}'\) is bounded, and
hence \eqref{eq:cond1} is satisfied for \(\psi_{i,\varepsilon}\). 
Thus, Step I applies, and letting
\(\varepsilon\downarrow0\) gives the compactly supported result by uniform
convergence of the product costs on the compact distance rectangle.

\smallskip\noindent\textbf{Step III: optimality for the general case by truncation.} 
It remains only to remove the compactness assumption for the optimality result.  Let
\(T_n(x)=(-n)\vee(x\wedge n)\), and put
\(\mu_n=(T_n)_\#\mu\), \(\nu_n=(T_n)_\#\nu\).  These marginals are bounded and
symmetric.  For an arbitrary \(\pi\in\Pi(\mu,\nu)\), let
\(\pi_n=(T_n,T_n)_\#\pi\).  
Step II gives
\begin{equation}
\label{eq:truncated-min}
  \mathcal Q^{(n)}(\pi_n)\ge \mathcal Q^{(n)}(\pi_{\mathrm{dia}}^{(n)}),
\end{equation}
where \(\mathcal Q^{(n)}\) is the same quadratic-form energy functional for
\(\mu_n,\nu_n\), and \(\pi_{\mathrm{dia}}^{(n)}\) is their diamond transport.
Because \(T_n\) is odd and nondecreasing,
\(\pi_{\mathrm{dia}}^{(n)}=(T_n,T_n)_\#\pi_{\mathrm{dia}}\).\footnote{Indeed,
for a nonnegative random variable \(A\), the left-continuous quantile of
\(T_n(A)=A\wedge n\) is
\(F_{T_n(A)}^{-1}(u)=T_n(F_A^{-1}(u))\).  Thus, the pushforward of the
\(\piant\) coupling of the absolute values is the \(\piant\) coupling of the
pushed-forward absolute values, even though clipping may create atoms at
\(n\).}
Moreover, by the monotonicity for $\psi_i$,  for every fixed \(x,x',y,y'\),
\[
  \psi_1(|T_n(x)-T_n(x')|)\psi_2(|T_n(y)-T_n(y')|)
  \uparrow \psi_1(|x-x'|)\psi_2(|y-y'|) .
\]
Applying monotone convergence to \(\pi\otimes\pi\) and to
\(\pi_{\mathrm{dia}}\otimes\pi_{\mathrm{dia}}\) gives the desired convergence.
Indeed, since \(\pi_n=(T_n,T_n)_\#\pi\),
\[
\begin{aligned}
\mathcal Q^{(n)}(\pi_n)
&=
\iint \psi_1(|T_n(x)-T_n(x')|)\psi_2(|T_n(y)-T_n(y')|)
\,\dd\pi(x,y)\,\dd\pi(x',y'),\\
\mathcal Q(\pi)
&=
\iint \psi_1(|x-x'|)\psi_2(|y-y'|)
\,\dd\pi(x,y)\,\dd\pi(x',y').
\end{aligned}
\]
The same identities hold with \(\pi_{\mathrm{dia}}\) in place of \(\pi\),
because \(\pi_{\mathrm{dia}}^{(n)}=(T_n,T_n)_\#\pi_{\mathrm{dia}}\).  Hence
\begin{equation}
\label{eq:truncated-convergence}
  \mathcal Q^{(n)}(\pi_n)\uparrow \mathcal Q(\pi),
  \qquad
  \mathcal Q^{(n)}(\pi_{\mathrm{dia}}^{(n)})\uparrow \mathcal Q(\pi_{\mathrm{dia}}).
\end{equation}
Passing \eqref{eq:truncated-min} to the limit using
\eqref{eq:truncated-convergence} proves
\(\mathcal Q(\pi)\ge \mathcal Q(\pi_{\mathrm{dia}})\).

\smallskip\noindent\textbf{Step IV: finiteness of $\mathcal Q(\pi_{\mathrm{dia}})$.}  By the Schoenberg correspondence
\cite[Theorem~3.2.2]{berg1984harmonic}, the assumption that
\(e^{-\alpha\psi_i(|u-v|)}\) is positive definite for every \(\alpha>0\)
implies that $k_i(u,v):=\psi_i(|u-v|)$ 
is conditionally negative definite.  Since \(k_i(u,u)=0\), the Schoenberg
embedding theorem implies that \(d_i(u,v):=k_i(u,v)^{1/2}\) is a Hilbertian
pseudometric; equivalently, one may use the triangle inequality for
\(d_i\); see \cite[Theorem~1]{schoenberg1938metric} and
\cite[Proposition~3.3.2]{berg1984harmonic}.  Therefore,
\begin{equation}
\label{eq:negative-type-growth}
  \psi_i(|u-v|)
  =
  d_i(u,v)^2
  \le
  \bigl(d_i(u,0)+d_i(0,v)\bigr)^2
  \le
  2\psi_i(|u|)+2\psi_i(|v|).
\end{equation}

Let \((X,Y)\sim\pi_{\mathrm{dia}}\).  Under the diamond transport, the radii
\(|X|\) and \(|Y|\) are antimonotonically coupled.  Since the functions
\(\psi_i\) are nondecreasing, \(\psi_1(|X|)\) and \(\psi_2(|Y|)\) are also
antimonotonically coupled.  Applying the one-dimensional rearrangement theorem \cite[Theorem~2.9]{santambrogio2015optimal}, we have
\begin{align}
    \mathbb E_{\pi_{\mathrm{dia}}}
  [\psi_1(|X|)\psi_2(|Y|)]
  \le
  \mathbb E_\mu[\psi_1(|X|)]\mathbb E_\nu[\psi_2(|Y|)]<\infty .\label{t1}
\end{align}
Now let \((X',Y')\) be an independent copy of \((X,Y)\).  By
\eqref{eq:negative-type-growth},
\begin{align}
&\psi_1(|X-X'|)\psi_2(|Y-Y'|)  \le
4\bigl(\psi_1(|X|)+\psi_1(|X'|)\bigr)
 \bigl(\psi_2(|Y|)+\psi_2(|Y'|)\bigr).\label{t2}
\end{align}
Combining \eqref{t1} and \eqref{t2} yields \(\mathcal Q(\pi_{\mathrm{dia}})<\infty\).

\smallskip\noindent\textbf{Step V: uniqueness.} 
\sloppy It remains to prove uniqueness under the additional cosine-representation
assumption \eqref{eq:cosine-representation}.  In what follows, \(\mathcal Q\)
denotes the quadratic-form energy with cost
\(\psi_1(|x-x'|)\psi_2(|y-y'|)\). If \(\pi_0,\pi_1\) are two minimizers and \(\sigma=\pi_1-\pi_0\), then
\(\sigma\) has zero marginals. Denote its Fourier transform by 
\(\widehat\sigma(t,s)=\int e^{i(tx+sy)}\,\dd\sigma(x,y)\).
For \(n\ge1\), put
\(\Lambda_i^{(n)}:=\mathbf 1_{\{1/n\le |t|\le n\}}\Lambda_i\) and
\[
  \psi_i^{(n)}(|u|)
  =
  \int_{\R}(1-\cos(tu))\,\Lambda_i^{(n)}(\dd t).
\]
The measures \(\Lambda_i^{(n)}\) are finite, so Fubini gives
\[
\begin{aligned}
&\iint \psi_1^{(n)}(|x-x'|)\psi_2^{(n)}(|y-y'|)
\,\dd\sigma(x,y)\,\dd\sigma(x',y')\\
&\quad =
\iint_{\R^2}
\iint
(1-\cos(t(x-x')))(1-\cos(s(y-y')))
\,\dd\sigma(x,y)\,\dd\sigma(x',y')\,
\Lambda_1^{(n)}(\dd t)\Lambda_2^{(n)}(\dd s)\\
&\quad =
\iint_{\R^2}
\iint
\cos(t(x-x'))\cos(s(y-y'))
\,\dd\sigma(x,y)\,\dd\sigma(x',y')\,
\Lambda_1^{(n)}(\dd t)\Lambda_2^{(n)}(\dd s)\\
&\quad =
\frac12\iint_{\R^2}
\left(
|\widehat\sigma(t,s)|^2+|\widehat\sigma(t,-s)|^2
\right)
\,\Lambda_1^{(n)}(\dd t)\Lambda_2^{(n)}(\dd s)\\
&\quad =
\iint_{\R^2}
|\widehat\sigma(t,s)|^2
\,\Lambda_1^{(n)}(\dd t)\Lambda_2^{(n)}(\dd s).
\end{aligned}
\]
Here the second equality uses the zero one-dimensional marginals of
\(\sigma\), and the last equality uses the symmetry of
\(\Lambda_2^{(n)}\) under \(s\mapsto -s\).
Since
\(0\le \psi_i^{(n)}\uparrow \psi_i\) and by our moment assumptions, Lemma \ref{lemma:abs integrability}, and \eqref{eq:negative-type-growth},
dominated convergence on the left and monotone convergence on the right give
\begin{equation}
\label{eq:product-fourier-identity}
\begin{aligned}
&\iint \psi_1(|x-x'|)\psi_2(|y-y'|)
\,\dd\sigma(x,y)\,\dd\sigma(x',y')\\
&\qquad =
\iint_{\R^2} |\widehat\sigma(t,s)|^2
\,\Lambda_1(\dd t)\Lambda_2(\dd s)\ge0.
\end{aligned}
\end{equation}
If the last integral in \eqref{eq:product-fourier-identity} is zero, then
\(\widehat\sigma=0\) on a set dense in \(\R^2\), since
\(\Lambda_1\otimes\Lambda_2\) has full support.  Continuity of
\(\widehat\sigma\) and uniqueness of Fourier transforms give \(\sigma=0\).

 Recall that \(\sigma=\pi_1-\pi_0\).  Expanding the quadratic form at
\((\pi_0+\pi_1)/2\) gives
\begin{equation}
\label{eq:product-midpoint}
  \mathcal Q\!\left(\frac{\pi_0+\pi_1}{2}\right)
  =
  \frac12\mathcal Q(\pi_0)+\frac12\mathcal Q(\pi_1)
  -\frac14
  \iint \psi_1(|x-x'|)\psi_2(|y-y'|)
  \,\dd\sigma(x,y)\,\dd\sigma(x',y').
\end{equation}
The midpoint identity \eqref{eq:product-midpoint} and the nonnegativity from
\eqref{eq:product-fourier-identity} show that equality with the minimum
forces the quadratic term
\[
  \iint \psi_1(|x-x'|)\psi_2(|y-y'|)
  \,\dd\sigma(x,y)\,\dd\sigma(x',y')
\]
to vanish.  This gives \(\sigma=0\), proving uniqueness.
\end{proof}

\begin{proof}[Proof of Example~\ref{ex:product-profiles}]
For (i),  this follows directly from Example \ref{ex:pre}.

For (ii), \(s\mapsto(1+\lambda_i s)^{\theta_i}-1\) is a Bernstein function and
\(u\mapsto |u|^{a_i}\) is continuous negative definite.  The
Bernstein-function composition theorem
\cite[Theorem~3.2.9]{berg1984harmonic} gives continuous negative definiteness
of \(u\mapsto\psi_i(|u|)\).  Moreover,
\[
  (1+\lambda_i s)^{\theta_i}-1
  =
  c_{\theta_i}\int_0^\infty e^{-t}(1-e^{-\lambda_i ts})
  t^{-1-\theta_i}\,\dd t,
  \qquad c_{\theta_i}>0,
\]
and the stable cosine representation of \(1-e^{-\lambda_i t|u|^{a_i}}\) (see \eqref{eq:stable} below)
gives a full-support cosine representation for \(\psi_i\).  Finally,
\[
  \psi_i''(r)
  =
  \theta_i\lambda_i a_i r^{a_i-2}(1+\lambda_i r^{a_i})^{\theta_i-2}
  \bigl((a_i-1)+(a_i\theta_i-1)\lambda_i r^{a_i}\bigr)>0 .
\]
Thus Theorem~\ref{thm:taylor-products} applies.

For (iii), the stable characteristic function gives
\begin{align}
    1-e^{-\lambda |u|^r}
  =
  \int_{\R}(1-\cos(ut))g_{\lambda,r}(t)\,\dd t,\quad r\in(0,2],\label{eq:stable}
\end{align}
where \(g_{\lambda,r}\) is the strictly positive density of a symmetric stable
law \cite[Section~14]{sato1999levy}.  Hence the cosine representation has full
support.  Also
\[
  \psi_1''(r)
  =
  \lambda p r^{p-2}e^{-\lambda r^p}(p-1-\lambda p r^p)>0
\]
on \((0,D_x]\) under \(\lambda pD_x^p<p-1\), and similarly for \(\psi_2\).\footnote{Note that here (and for (iv) below), $\psi_i$ is not convex on $(0,\infty)$, but we may without loss of generality replace it by a convex function that is equal to $\psi_i$ on the compact supports of $\mu,\nu$.}
Thus Theorem~\ref{thm:taylor-products} applies. 

For (iv), the identity
\[
  \log(1+\lambda s)
  =
  \int_0^\infty (1-e^{-ts})e^{-t/\lambda}\,\frac{\dd t}{t}
\]
combined with the stable representation \eqref{eq:stable} gives continuous negative
definiteness and a full-support cosine representation.  Finally,
\[
  \psi_1''(r)
  =
  \frac{\lambda p r^{p-2}(p-1-\lambda r^p)}
       {(1+\lambda r^p)^2}>0
\]
on \((0,D_x]\) under \(\lambda D_x^p<p-1\), and similarly for \(\psi_2\).
Again Theorem~\ref{thm:taylor-products} applies.
\end{proof}

\subsection{Proof of results in Section \ref{33}}

Throughout this subsection, write
\[
  \mathcal Q_{p,q}(\pi):=
  \iint |x-x'|^p|y-y'|^q\,\dd\pi(x,y)\,\dd\pi(x',y')
\]
for the quadratic-form energy associated with the rectangular-power cost.

\begin{lemma}
\label{lem:finite-mixed}
In the same setup as Theorem~\ref{thm:q-rectangular}, if
\(\mathcal Q_{p,q}(\pi)<\infty\), then
\(\int |x|^p|y|^q\,\dd\pi(x,y)<\infty\).
\end{lemma}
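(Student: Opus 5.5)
The argument is the same as the first part of the proof of Lemma~\ref{lemma:abs integrability}, specialised to $\psi_1(r)=r^p$ and $\psi_2(r)=r^q$. The one change is that the pseudometric step is replaced by an elementary lower bound on $|u-u'|$ when $|u|$ is large compared with $|u'|$.

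Fix $\pi\in\Pi(\mu,\nu)$ with $\mathcal Q_{p,q}(\pi)<\infty$. Since $\pi$ is a probability measure, we can choose $R>0$ with $m:=\pi([-R,R]^2)>0$. We split $\R^2$ into
\[
G:=\{|x|>2R,\ |y|>2R\}
\qquad\text{and}\qquad
G^c .
\]

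On $G^c$ at least one coordinate is bounded by $2R$. Hence
\[
|x|^p|y|^q\le (2R)^p|y|^q+(2R)^q|x|^p ,
\]
and the right-hand side is $\pi$-integrable because $\mu\in\mathcal P_p(\R)$ and $\nu\in\mathcal P_q(\R)$.

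On $G$, take $(x',y')\in[-R,R]^2$. Then $|x-x'|\ge |x|-R\ge |x|/2$, and likewise $|y-y'|\ge|y|/2$. Restricting the outer integral in $\mathcal Q_{p,q}(\pi)$ to $(x',y')\in[-R,R]^2$ and the inner integral to $G$ gives
\[
\mathcal Q_{p,q}(\pi)\ \ge\ m\,2^{-p-q}\int_G |x|^p|y|^q\,\dd\pi(x,y).
\]
This bound is legitimate because the integrand of $\mathcal Q_{p,q}$ is nonnegative, so dropping regions only decreases the integral.

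Adding the two pieces gives $\int|x|^p|y|^q\,\dd\pi<\infty$. No step here is a genuine obstacle. The only care needed is to use nonnegativity when restricting the double integral, and to make sure $R$ is chosen with $m>0$.
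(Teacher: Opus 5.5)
Your proposal is correct and follows essentially the same argument as the paper: choose $R$ with $\pi([-R,R]^2)>0$, bound the mixed moment on $\{|x|>2R,\ |y|>2R\}$ by $\mathcal Q_{p,q}(\pi)$ via the lower bound $|x-x'|^p|y-y'|^q\ge 2^{-p-q}|x|^p|y|^q$ against $(x',y')\in[-R,R]^2$, and control the complement by $(2R)^q|x|^p+(2R)^p|y|^q$ using the marginal moments. Nothing further is needed.
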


\begin{proof}
Choose \(R>0\) such that \(m:=\pi([-R,R]^2)>0\).  Let
\(B=[-R,R]^2\) and \(A=\{|x|>2R,\ |y|>2R\}\).  If
\((x,y)\in A\) and \((x',y')\in B\), then
\(|x-x'|^p|y-y'|^q\ge 2^{-p-q}|x|^p|y|^q\).
It follows that
\[
  \mathcal Q_{p,q}(\pi)
  \ge
  m\,2^{-p-q}\int_A |x|^p|y|^q\,\dd\pi(x,y),
\]
so the mixed moment is finite on \(A\).  On \(A^c\), at least one of
\(|x|\) and \(|y|\) is at most \(2R\), and hence
\(|x|^p|y|^q\le(2R)^q|x|^p+(2R)^p|y|^q\).
The latter is \(\pi\)-integrable because the marginals are in
\(\mathcal P_p(\R)\) and \(\mathcal P_q(\R)\).
\end{proof}

\begin{lemma}
\label{lem:strict-negative-type}
Let \(0<a<2\).  Let \(\tau\) be a finite signed Borel measure on \(\R\) with
\(\tau(\R)=0\) and \(\int |x|^a\,\dd|\tau|(x)<\infty\).  Then
\begin{align}
     \iint |x-x'|^a\,\dd\tau(x)\,\dd\tau(x')\le0,\label{g}
\end{align}
and equality holds if and only if \(\tau=0\).
\end{lemma}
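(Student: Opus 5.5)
The plan is to use the L\'evy--Khintchine (cosine) representation already recalled in Example~\ref{ex:pre}:
\[
  |u|^a=c_a\int_{\R}(1-\cos(tu))\,|t|^{-1-a}\,\dd t,\qquad c_a>0,\ 0<a<2.
\]
The argument runs exactly as in Step~V of the proof of Theorem~\ref{thm:taylor-products}, but in one dimension and with the opposite sign. We truncate the L\'evy measure to $\Lambda^{(n)}:=c_a\mathbf 1_{\{1/n\le|t|\le n\}}|t|^{-1-a}\,\dd t$, which is a finite measure. Then $\psi^{(n)}(|u|):=\int(1-\cos(tu))\,\Lambda^{(n)}(\dd t)$ is bounded, and $0\le\psi^{(n)}(|u|)\uparrow|u|^a$.

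First we compute the truncated quadratic form. Since $\Lambda^{(n)}$ is finite and $\tau$ is finite, Fubini applies. Because $\tau(\R)=0$, the constant term $1$ integrates to zero against $\tau\otimes\tau$, which gives
\[
  \iint\psi^{(n)}(|x-x'|)\,\dd\tau\,\dd\tau
  =-\int_{\R}\iint\cos(t(x-x'))\,\dd\tau(x)\,\dd\tau(x')\,\Lambda^{(n)}(\dd t)
  =-\int_{\R}|\widehat\tau(t)|^2\,\Lambda^{(n)}(\dd t)\le0,
\]
where $\widehat\tau(t)=\int e^{itx}\,\dd\tau(x)$. Here we use $\cos(t(x-x'))=\operatorname{Re}\big(e^{itx}e^{-itx'}\big)$, together with the fact that $\tau$ is real.

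Next we pass to the limit $n\to\infty$. On the right-hand side, monotone convergence gives $-\int|\widehat\tau|^2\,\Lambda(\dd t)$, where $\Lambda$ is the full measure $c_a|t|^{-1-a}\,\dd t$. On the left-hand side we use dominated convergence with dominating function $|x-x'|^a\le C_a(|x|^a+|x'|^a)$, with $C_a=\max(1,2^{a-1})$. This function is integrable against $|\tau|\otimes|\tau|$ because $|\tau|$ is finite and $\int|x|^a\,\dd|\tau|<\infty$. Together these yield the identity
\[
  \iint|x-x'|^a\,\dd\tau\,\dd\tau=-c_a\int_{\R}|\widehat\tau(t)|^2|t|^{-1-a}\,\dd t,
\]
which proves \eqref{g}.

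For the equality case, suppose the left-hand side vanishes. Then $\widehat\tau(t)=0$ for Lebesgue-a.e.\ $t$, since the density $|t|^{-1-a}$ is strictly positive. The function $\widehat\tau$ is continuous, because $\tau$ is a finite measure, so $\widehat\tau\equiv0$. Uniqueness of Fourier--Stieltjes transforms of finite signed measures then gives $\tau=0$. The converse direction is trivial.

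The only delicate point is the limit interchange, and specifically the integrability of the dominating function. That function is controlled precisely by the hypotheses: finiteness of $|\tau|$ and the finite $a$-th moment of $|\tau|$.
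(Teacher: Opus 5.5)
Your proof is correct, but it takes a different route from the paper's. The paper does no computation here. It quotes \cite[Lemma~6]{kroll2022asymptotic} to assert that $(\R,|\cdot-\cdot|^a)$ is a semimetric space of strong negative type. It then applies this to $\tau=\tau^+-\tau^-$, implicitly normalizing the two equal-mass parts $\tau^\pm$ to probability measures with finite $a$-th moments. The inequality and the equality case $\tau^+=\tau^-$ then follow straight from the definition.

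You instead derive the explicit spectral identity $\iint|x-x'|^a\,\dd\tau\,\dd\tau=-c_a\int_{\R}|\widehat\tau(t)|^2|t|^{-1-a}\,\dd t$ from the cosine representation in Example~\ref{ex:pre}. This is exactly the one-dimensional, total-mass-zero analogue of Step~V in the proof of Theorem~\ref{thm:taylor-products}. Each of your steps is justified by the stated hypotheses: truncation to a finite L\'evy measure, Fubini with a bounded integrand, dominated convergence via $|x-x'|^a\le C_a(|x|^a+|x'|^a)$, monotone convergence on the Fourier side, and continuity plus Fourier uniqueness for the equality case.

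Your approach is self-contained, using only tools already present in the paper. It delivers the sign and the strictness in one stroke. It also avoids any dependence on the precise form of the external result, such as semimetric versus metric when $1<a<2$, or signed versus probability measures.

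The paper's citation is shorter and applies verbatim in any separable Hilbert space. It also keeps the lemma inside the same strong-negative-type and Hilbert-embedding framework that is reused for the product kernel in Lemma~\ref{lem:q-rect-strict}.
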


\begin{proof}
By \cite[Lemma~6]{kroll2022asymptotic}, applied with \(H=\R\) and
\(\beta=a\), the semimetric space \((\R,|\cdot-\cdot|^a)\) is of strong
negative type.  Since \(\tau(\R)=0\), the signed-measure form of this property
gives \eqref{g}. 
Moreover, equality in strong negative type can occur only when the positive
and negative parts of \(\tau\) agree.  Hence equality holds only if
\(\tau^+=\tau^-\), i.e. \(\tau=0\).  The converse is immediate.
\end{proof}

\begin{lemma}
\label{lem:q-rect-strict}
Let \(0<p,q<2\).  Let \(\sigma\) be a finite signed Borel measure on
\(\R^2\) whose two one-dimensional marginals are zero.  Assume that
\begin{align}
    \int_{\R^2} (1+|x|^p)(1+|y|^q)\,\dd|\sigma|(x,y)<\infty .\label{eq:mom}
\end{align}
Then
\[
  \mathcal E_{p,q}(\sigma):=
  \iint |x-x'|^p|y-y'|^q
  \,\dd\sigma(x,y)\,\dd\sigma(x',y')
\]
is absolutely finite, \(\mathcal E_{p,q}(\sigma)\ge0\), and equality holds if
and only if \(\sigma=0\).
\end{lemma}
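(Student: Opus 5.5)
Since \(0<p,q<2\), both \(|u|^p\) and \(|u|^q\) have full-support cosine representations (Example~\ref{ex:pre}): \(|u|^p=c_p\int_{\R}(1-\cos(tu))|t|^{-1-p}\,\dd t\), and similarly for \(q\). I would therefore run the Fourier argument from Step~V of the proof of Theorem~\ref{thm:taylor-products} directly, with \(\Lambda_1(\dd t)=c_p|t|^{-1-p}\dd t\) and \(\Lambda_2(\dd s)=c_q|s|^{-1-q}\dd s\).

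\textbf{Absolute finiteness.} Using \(|x-x'|^p\le 2^{p}(|x|^p+|x'|^p)\), and similarly in \(y\), the integrand is bounded by
\[
C\,(|x|^p+|x'|^p)(|y|^q+|y'|^q).
\]
Each of the four product terms, for instance \(|x|^p|y'|^q\) or \(|x|^p|y|^q\), is integrable against \(|\sigma|\otimes|\sigma|\) by \eqref{eq:mom}. Indeed, \eqref{eq:mom} controls \(\int|x|^p\,\dd|\sigma|\), \(\int|y|^q\,\dd|\sigma|\), \(\int|x|^p|y|^q\,\dd|\sigma|\), and the total mass \(|\sigma|(\R^2)\).

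\textbf{Truncated identity.} Truncate the Lévy measures to \(\Lambda_i^{(n)}=\mathbf 1_{\{1/n\le|t|\le n\}}\Lambda_i\), which are finite. Fubini is justified because \(|\sigma|\) is finite and the cosine factors are bounded. Expanding
\[
(1-\cos(t(x-x')))(1-\cos(s(y-y')))
\]
gives four terms. The terms \(1\), \(\cos(t(x-x'))\), and \(\cos(s(y-y'))\) integrate to zero against \(\sigma\otimes\sigma\): they depend on only one coordinate, and both marginals of \(\sigma\) vanish. The surviving term \(\cos(t(x-x'))\cos(s(y-y'))\) integrates to
\[
\tfrac12\bigl(|\widehat\sigma(t,s)|^2+|\widehat\sigma(t,-s)|^2\bigr).
\]
Using the symmetry of \(\Lambda_2\) under \(s\mapsto -s\), this yields
\[
\mathcal E^{(n)}=\iint|\widehat\sigma(t,s)|^2\,\Lambda_1^{(n)}(\dd t)\Lambda_2^{(n)}(\dd s)\ge 0.
\]

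\textbf{Passage to the limit.} As \(n\to\infty\), \(\psi^{(n)}_1\uparrow|u|^p\) and \(\psi^{(n)}_2\uparrow|u|^q\). The left side converges to \(\mathcal E_{p,q}(\sigma)\) by dominated convergence, with dominating function \(|x-x'|^p|y-y'|^q\in L^1(|\sigma|\otimes|\sigma|)\) from the finiteness step. The right side increases to \(\iint|\widehat\sigma|^2\,\Lambda_1\otimes\Lambda_2\) by monotone convergence. Hence
\[
\mathcal E_{p,q}(\sigma)=\iint|\widehat\sigma(t,s)|^2\,\Lambda_1(\dd t)\Lambda_2(\dd s)\ge0.
\]

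\textbf{Equality case.} If \(\mathcal E_{p,q}(\sigma)=0\), then \(\widehat\sigma=0\) for \(\Lambda_1\otimes\Lambda_2\)-a.e. \((t,s)\). Since \(\Lambda_1\otimes\Lambda_2\) has an everywhere positive density away from the axes, \(\widehat\sigma\) vanishes a.e. in Lebesgue measure. Because \(\widehat\sigma\) is continuous (\(\sigma\) is finite), it vanishes identically, and Fourier uniqueness gives \(\sigma=0\). The converse is trivial.

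\textbf{Main obstacle.} The only delicate step is justifying the limit. If \(n\) is not taken to infinity, the dominating function must already be integrable, so absolute finiteness has to come from \eqref{eq:mom} alone, as above. Lemma~\ref{lem:strict-negative-type} is not needed for this route, but it could serve as a fallback: apply it slice-wise after integrating out one kernel via its cosine representation.
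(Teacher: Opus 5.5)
Your proof is correct, but it takes a different route from the paper's. The paper embeds the strong-negative-type semimetrics $|x-x'|^p$ and $|y-y'|^q$ as squared Hilbert distances $\|\varphi(x)-\varphi(x')\|^2$ and $\|\psi(y)-\psi(y')\|^2$. Expanding the product with the zero marginals gives $\mathcal E_{p,q}(\sigma)=4\bigl\|\int\varphi(x)\otimes\psi(y)\,\dd\sigma(x,y)\bigr\|^2_{H\otimes H'}$, and strictness then comes from the injectivity of this tensor-barycenter map, which the paper imports, in corrected form, from Lyons's work on distance covariance.

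You instead specialize the Fourier mechanism of Step~V in the proof of Theorem~\ref{thm:taylor-products} to the explicit L\'evy measures $c_p|t|^{-1-p}\,\dd t$ and $c_q|s|^{-1-q}\,\dd s$. This yields the distance-covariance identity $\mathcal E_{p,q}(\sigma)=c_pc_q\iint|\widehat\sigma(t,s)|^2|t|^{-1-p}|s|^{-1-q}\,\dd t\,\dd s$, so strictness reduces to continuity of $\widehat\sigma$ plus Fourier uniqueness. The details hold up:
\begin{itemize}
\item \eqref{eq:mom} alone supplies the dominating function for the left-hand limit, so no auxiliary finite-energy integrability lemma is needed.
\item The one-coordinate terms vanish because both marginals of $\sigma$ are zero.
\item The cosine representation of $|u|^r$ holds for every $r\in(0,2)$ and convexity is never used. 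Your argument therefore covers the full range $0<p,q<2$, even though $u\mapsto u^p$ belongs to $\Xi$ only for $p\ge1$.
\end{itemize}

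Your route is self-contained and produces an explicit nonnegative spectral formula, resting only on a standard integral identity rather than an external injectivity lemma. The paper's route is more abstract and never uses the translation-invariant Fourier structure of $\R$, so in principle it applies to any pair of semimetric spaces of strong negative type.
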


\begin{proof}
Let \(d_p(x,x')=|x-x'|^p\) and \(d_q(y,y')=|y-y'|^q\).  Since
\(0<p,q<2\), these are semimetrics of strong negative type on \(\R\); see
\cite[Lemma~6]{kroll2022asymptotic}.  By Schoenberg's Hilbert-space
characterization of negative type, equivalently
\cite[Remark~2.7]{janson2021distance}, and the tensor-barycenter lemma
\cite[Lemma~3.8]{lyons2013distance}, as corrected in \cite[item~(ix) of the erratum]{lyons2018errata}, there are Hilbert spaces \(H,H'\) and
maps \(\varphi:\R\to H\), \(\psi:\R\to H'\) such that
\[
  d_p(x,x')=\|\varphi(x)-\varphi(x')\|_H^2,
  \qquad
  d_q(y,y')=\|\psi(y)-\psi(y')\|_{H'}^2 
\]
and that the tensor barycenter map
$\theta\mapsto \int \varphi(x)\otimes\psi(y)\,\dd\theta(x,y)$
is injective on finite signed measures whose total-variation marginals have
finite first moments in \(d_p\) and \(d_q\).
The moment assumption \eqref{eq:mom} makes
\(\int \varphi(x)\otimes\psi(y)\,\dd\sigma(x,y)\) well-defined.  Since
\(\sigma\) has total mass zero and zero one-dimensional marginals, expanding
the two squared Hilbert distances gives
\[
\begin{aligned}
\mathcal E_{p,q}(\sigma)
&=
\iint d_p(x,x')d_q(y,y')\,
\dd\sigma(x,y)\,\dd\sigma(x',y')  \\
&=
\iint
\|\varphi(x)-\varphi(x')\|_H^2
\|\psi(y)-\psi(y')\|_{H'}^2\,
\dd\sigma(x,y)\,\dd\sigma(x',y')  \\
&=
\iint
\bigl(\|\varphi(x)\|^2+\|\varphi(x')\|^2
      -2\langle\varphi(x),\varphi(x')\rangle\bigr) \\
&\qquad\qquad\times
\bigl(\|\psi(y)\|^2+\|\psi(y')\|^2
      -2\langle\psi(y),\psi(y')\rangle\bigr)\,
\dd\sigma(x,y)\,\dd\sigma(x',y')  \\
&=
4\iint
\langle\varphi(x),\varphi(x')\rangle
\langle\psi(y),\psi(y')\rangle\,
\dd\sigma(x,y)\,\dd\sigma(x',y')  \\
&=
4\left\|
  \int \varphi(x)\otimes\psi(y)\,\dd\sigma(x,y)
\right\|_{H\otimes H'}^2
\ge0 .
\end{aligned}
\]
In the fourth equality, every term except the product of the two inner
products vanishes because \(\sigma(\R^2)=0\) and the two one-dimensional
marginals of \(\sigma\) are zero.
If equality holds, then the injectivity gives \(\sigma=0\).
\end{proof}

\begin{proof}[Proof of Theorem~\ref{thm:q-rectangular}]
We split our proof into five steps.

\smallskip\noindent\textbf{Step I: minimization for \(1<p,q\le2\).}
This follows directly from  Theorem \ref{thm:taylor-products} applied with $\psi_1(r)=r^p$ and $\psi_2(r)=r^q$. The moment constraints from Theorem \ref{thm:taylor-products}(ii) follow from our assumptions.

\smallskip\noindent\textbf{Step II: uniqueness for \(1<p,q<2\), proving (i).}
Let \(\pi_0,\pi_1\) be minimizers.  They have finite \(\mathcal Q_{p,q}\)-energy, so
Lemma~\ref{lem:finite-mixed} gives
\(\int |x|^p|y|^q\,\dd\pi_i(x,y)<\infty\) for \(i=0,1\).
Set \(\sigma=\pi_1-\pi_0\).  The signed measure \(\sigma\) has zero
marginals.  The estimate
\[
  |x-x'|^p|y-y'|^q\le
  2^{p+q-2}(|x|^p+|x'|^p)(|y|^q+|y'|^q)
\]
expands into four terms, each finite with respect to
\((\pi_0+\pi_1)\otimes(\pi_0+\pi_1)\) by the mixed moments above and the
marginal \(p\)- and \(q\)-moment assumptions.  Since
\(|\sigma|\le\pi_0+\pi_1\), this gives the absolute integrability required in
Lemma~\ref{lem:q-rect-strict}.
Let \(\bar\pi=(\pi_0+\pi_1)/2\).  Expanding the quadratic form gives
\begin{equation}
\label{eq:qrect-midpoint}
\begin{aligned}
\mathcal Q_{p,q}(\bar\pi)
&=
\frac12\mathcal Q_{p,q}(\pi_0)+\frac12\mathcal Q_{p,q}(\pi_1)
-\frac14
\iint |x-x'|^p|y-y'|^q\,\dd\sigma(x,y)\,\dd\sigma(x',y') \\
&=
\inf_{\pi\in\Pi(\mu,\nu)}\mathcal Q_{p,q}(\pi)
-\frac14\mathcal E_{p,q}(\sigma).
\end{aligned}
\end{equation}
Since \(\bar\pi\) is feasible, \eqref{eq:qrect-midpoint} implies that
\(\mathcal E_{p,q}(\sigma)\le0\).  Lemma~\ref{lem:q-rect-strict} gives
\(\mathcal E_{p,q}(\sigma)\ge0\), and hence
\(\mathcal E_{p,q}(\sigma)=0\).  The strict equality statement in the lemma
gives \(\sigma=0\), so the minimizer is unique.  By Step I, it is the diamond
transport.

\smallskip\noindent\textbf{Step III: boundary characterization for \(1<p<2=q\), proving the minimizer description in (ii).}
Consider \(1<p<2=q\), and let \(\pi=\law(X,Y)\in\Pi(\mu,\nu)\).  Since the
diamond value
\[
  \mathcal Q_{p,2}(\pi_{\mathrm{dia}})
  =
  \iint |x-x'|^p|y-y'|^2
  \,\dd\pi_{\mathrm{dia}}(x,y)\,\dd\pi_{\mathrm{dia}}(x',y')
\]
is finite by Step I, only finite-energy couplings can be
minimizers.  For such \(\pi\), define
\[
  \Phi_p(x):=\int_\R |x-u|^p\,\dd\mu(u),
  \qquad
  \tau(A):=\int_{A\times\R}y\,\dd\pi(x,y).
\]
Then \(\tau\) is a finite signed measure and \(\tau(\R)=0\).  Expanding
\(|y-y'|^2\) gives
\begin{equation}
\label{eq:p2-expansion}
  \mathcal Q_{p,2}(\pi)
  =
  2\int \Phi_p(x)y^2\,\dd\pi(x,y)
  -2\iint |x-x'|^p\,\dd\tau(x)\,\dd\tau(x').
\end{equation}
The formula \eqref{eq:p2-expansion} separates the two constraints: 

\begin{itemize}
    \item First, since \(\mu\) is
symmetric, \(\Phi_p(x)=\phi_p(|x|)\), where \(\phi_p\) is strictly increasing
on \([0,\infty)\).\footnote{Indeed, for fixed \(a\ge0\), the function
\((r+a)^p+|r-a|^p\) is strictly increasing in \(r\ge0\), and integration
against the law of \(|X|\) preserves this monotonicity.}
By the strict supermodular rearrangement theorem
\cite[Theorem~3.1]{puccetti2015extremal}, applied to the laws of
\(|X|\) and \(|Y|\), 
\[
  \int \Phi_p(x)y^2\,\dd\pi(x,y)
  \ge
  \int \phi_p(r)t^2\,\dd\gamma^-(r,t),
\]
with equality exactly when \(\law(|X|,|Y|)=\gamma^-\). 
\item  Second,
Lemma~\ref{lem:finite-mixed} and $|y|\leq 1+y^2$ together give
\(\int |x|^p\,\dd|\tau|(x)<\infty\), so
Lemma~\ref{lem:strict-negative-type} gives
\[
  \iint |x-x'|^p\,\dd\tau(x)\,\dd\tau(x')\le0,
\]
with equality only when \(\tau=0\).
The condition \(\tau=0\) is equivalent to \(\E_\pi[Y\mid X]=0\) \(\pi\)-a.s.
\end{itemize}

The diamond transport has absolute-value law \(\gamma^-\) and satisfies
\(\tau=0\), so \eqref{eq:p2-expansion} proves the characterization in (ii).

\smallskip\noindent\textbf{Step IV: boundary uniqueness for (ii).}
Assume \(1<p<2=q\).  If \(\gamma^-\) is Monge from \(\law(|X|)\) to
\(\law(|Y|)\) away from zero, let \(S:(0,\infty)\to[0,\infty)\) be a
corresponding Borel map.  For any minimizer, Step III gives
\(\law(|X|,|Y|)=\gamma^-\) and \(\E_\pi[Y\mid X]=0\).  Thus,
\(|Y|=S(|X|)\) whenever \(|X|>0\).  The symmetry of \(\mu\) makes the sign of
\(X\) conditionally uniform given \(|X|>0\).  If \(S(|X|)>0\), then
\[
  \E_\pi[\operatorname{sgn}(Y)\mid X]
  =S(|X|)^{-1}\E_\pi[Y\mid X]=0,
\]
so the sign of \(Y\) is conditionally uniform given \(X\); if \(S(|X|)=0\),
then \(Y=0\).  Thus, the part of the minimizer on \(\{|X|>0\}\) agrees with
the diamond transport.  On \(\{X=0\}\), the absolute-value law fixes the
conditional law of \(|Y|\).  The already identified part has a symmetric
\(Y\)-marginal, and the full \(Y\)-marginal is symmetric, so the remaining
projection to the \(y\)-coordinate is also symmetric.  Therefore, the
remaining part also agrees with the diamond transport, and uniqueness holds.

Conversely, suppose \(\gamma^-\) is not Monge from \(\law(|X|)\) to
\(\law(|Y|)\) away from zero.  Disintegrate
\(\gamma^-(\dd\rho,\dd\theta)=\alpha(\dd\rho)\kappa_\rho(\dd\theta)\) with
respect to its first marginal.  Failure of the Monge property means that
\(\kappa_\rho\) is not a Dirac mass for a set of \(\rho>0\) with positive $\alpha$-measure.  Hence,
there is a bounded Borel \(f(\rho,\theta)\), not \(\gamma^-\)-a.e.~zero and
supported on \(\rho>0\), such that
\[
  \int f(\rho,\theta)\,\kappa_\rho(\dd\theta)=0
  \quad\text{for }\alpha\text{-a.e. }\rho .
\]
After multiplying \(f\) by a small scalar, assume \(|f|\le1\).  Sample
\((\rho,\theta)\sim\gamma^-\), choose \(\eta\) as an independent Rademacher
sign, and choose \(\varepsilon\) conditionally by
\[
  \mathbb P(\varepsilon=1\mid \rho,\theta)=\frac{1+f(\rho,\theta)}2,
  \qquad
  \mathbb P(\varepsilon=-1\mid \rho,\theta)=\frac{1-f(\rho,\theta)}2 .
\]
Then \(X=\varepsilon\rho\) has law \(\mu\), \(Y=\eta\theta\) has law \(\nu\),
and \(\law(|X|,|Y|)=\gamma^-\).  Indeed,
\(\E[\varepsilon\mid\rho]=0\) by the conditional mean-zero property of
\(f\), while \(\eta\) is an independent balanced sign.  If
\(\tilde\pi:=\law(X,Y)\), then \(\E_{\tilde\pi}[Y\mid X]=0\) because
\(\eta\) has conditional mean zero even after conditioning on \(X\), but
\(\tilde\pi\) is not the diamond law
because the sign of \(X\) is not conditionally independent of \(\theta\)
given \(\rho\).  Hence, uniqueness fails.  This proves the
uniqueness criterion in (ii).

\smallskip\noindent\textbf{Step V: the boundary \(p=q=2\), proving (iii).}
Let \(\pi=\law(X,Y)\) and $\E=\E_\pi$ in the following.  For finite-energy \(\pi\), expansion gives
\begin{equation}
\label{eq:q2-expansion}
  \mathcal Q_{2,2}(\pi)
  =
  2\E[X^2Y^2]+2\E[X^2]\E[Y^2]+4(\E[XY])^2.
\end{equation}
By \cite[Theorem~3.1]{puccetti2015extremal},
\(\E[X^2Y^2]\ge\int r^2t^2\,\dd\gamma^-(r,t)\), with equality exactly when
\(\law(|X|,|Y|)=\gamma^-\).  Therefore, minimizers are exactly the couplings
with \(\law(|X|,|Y|)=\gamma^-\) and \(\E[XY]=0\).

For the uniqueness statement, fix a minimizer and write
\((\rho,\theta)=(|X|,|Y|)\), so \((\rho,\theta)\sim\gamma^-\).  On the set
\(\rho\theta>0\), write \(X=\varepsilon\rho\) and \(Y=\eta\theta\), where
\(\varepsilon,\eta\in\{-1,1\}\).  The fixed symmetric marginals impose the
one-dimensional sign-balance constraints, while the condition \(\E[XY]=0\)
is
\[
  \int \rho\theta\,\E[\varepsilon\eta\mid\rho,\theta]\,
  \dd\gamma^-(\rho,\theta)=0.
\]

Suppose first that \(\gamma^-\) is Monge from \(\law(|X|)\) to
\(\law(|Y|)\) away from zero, the transpose of \(\gamma^-\) is Monge from
\(\law(|Y|)\) to \(\law(|X|)\) away from zero, and that
\(\gamma^-|_{(0,\infty)^2}\) is either zero or
concentrated at a single point.  On the axes, one coordinate is zero.  Once
the positive-absolute-value part is fixed, the symmetry of the corresponding
one-dimensional marginal forces the remaining one-coordinate sign
distribution on the axes to be symmetric, exactly as in the diamond
transport.  If the positive-absolute-value part is concentrated at
\((r_0,t_0)\), the two Monge properties imply that no other mass has
\(|X|=r_0\) or \(|Y|=t_0\).  The two marginal sign constraints make the signs
at \((r_0,t_0)\) uniform individually, and \(\E[XY]=0\) forces their
correlation to vanish.  Thus, the four sign pairs at \((r_0,t_0)\) have
probability \(1/4\), and the only minimizer is \(\pi_{\mathrm{dia}}\).

Conversely, if \(\gamma^-\) is not Monge from \(\law(|X|)\) to
\(\law(|Y|)\) away from zero, the sign-bias construction in Step IV gives a
non-diamond minimizer.  The same holds if the transpose of \(\gamma^-\) is
not Monge from \(\law(|Y|)\) to \(\law(|X|)\) away from zero.  It remains to
consider the case where both Monge properties hold but
\(\gamma^-|_{(0,\infty)^2}\) is neither zero nor
concentrated at a single point.  Then one can choose a bounded nonzero Borel
function \(k\), supported on \((0,\infty)^2\), such that
\(\int rt\,k(r,t)\,\dd\gamma^-(r,t)=0\) and \(|k|\le1\).\footnote{For
instance, take two disjoint positive-mass sets in \((0,\infty)^2\) and
subtract constants so that the \(rt\,\gamma^-\)-mean is zero.}  Given
\((\rho,\theta)\sim\gamma^-\), choose signs by
\[
  \mathbb P(\varepsilon=a,\eta=b\mid \rho,\theta)
  =
  \frac{1+ab\,k(\rho,\theta)}4,\qquad a,b=\pm1.
\]
This keeps both one-dimensional marginals symmetric, keeps the law of the
absolute values equal to \(\gamma^-\), and gives \(\E[XY]=0\), but it is not
the diamond coupling.  Therefore, uniqueness fails in all remaining cases.
\end{proof}

\begin{proof}[Proof of Example~\ref{ex:q2-uniform}]
For any \(\pi\in\Pi(\U[-1,1],\U[-1,1])\), with \((X,Y)\sim\pi\), the
expansion \eqref{eq:q2-expansion} gives
\[
  \mathcal Q_{2,2}(\pi)
  =
  2\E[X^2Y^2]+2\E[X^2]\E[Y^2]+4(\E[XY])^2.
\]
Here, \(\E[X^2]=\E[Y^2]=1/3\).  Also, \(X^2\) and \(Y^2\) have quantile
function \(t\mapsto t^2\), so the rearrangement inequality gives
\(\E[X^2Y^2]\ge\int_0^1t^2(1-t)^2\,\dd t=1/30\).  Hence, every coupling has
energy at least \(13/45\).

For the construction in \eqref{eq:q2-sign-construction}, the conditional
sign marginals are uniform.
The variables \(X,Y\) are therefore distributed as
\(\U[-1,1]\), and
\((|X|,|Y|)=(R,1-R)\) has the \(\piant\) absolute-value law.  Also,
\(\E[XY]=\int_0^1 r(1-r)(r-1/2)\,\dd r=0\).
Thus, its energy is \(13/45\), so it is a minimizer.
However, it is not the diamond coupling.  For the centered diamond coupling,
conditional on \(|X|=r\), the four sign pairs
\((\operatorname{sgn}X,\operatorname{sgn}Y)\) are equiprobable.  In the
coupling above,
\(\mathbb P(XY>0\mid |X|=r)=(1+r-1/2)/2\),
which is not identically \(1/2\).
\end{proof}

\section{Conclusion}\label{sec:conclusion}
The results in this paper identify new type-XX QOT problems which have an explicit diamond minimizer.  They
extend the diamond theory of \cite{wangzhangQOT} by allowing different
one-dimensional profiles, weaker moment constraints, and further examples. The proofs separate two mechanisms that are
merged in the original rectangular example: convexity of the QOT quadratic
form and
the quadrant rearrangement structure of the diamond transport. Using negative-type properties, we also provide sufficient conditions for the uniqueness of these QOT solutions and fully characterize the uniqueness for the rectangular-type cost $|x-x'|^p|y-y'|^q,~p,q\in(1,2]$. 
The DOT counterpart in Section~\ref{sec:dot} shows that the same diamond geometry also appears outside QOT, both as a class of minimizers and, after calibration, as a unique minimizer.

\section*{Acknowledgments}

Ruodu Wang was supported by the Natural Sciences and Engineering Research Council of Canada (CRC-2022-00141, RGPIN-2024-03728).
Zhenyuan Zhang was supported by the Jump Trading Fellowship.

\bibliographystyle{plain}
\bibliography{qot_combined_open_problem_solutions}

\end{document}